\newtheorem{thm}{Theorem}[section]
\newtheorem{lem}[thm]{Lemma}
\newtheorem{prop}[thm]{Proposition}
\theoremstyle{definition}
\newtheorem{defn}[thm]{Definition}
\newtheorem{notation}[thm]{Notation}
\newtheorem{question}[thm]{Question}
\newtheorem{rmk}[thm]{Remark}
\def\ZZ{\mathbb{Z}}
\def\QQ{\mathbb{Q}}
\def\CC{\mathbb{C}}
\def\PP{\mathbb{P}}
\def\FF{\mathbb{F}}
\def\Fp{\mathbb{F}_p}
\newcommand{\dd}[2]{\frac{\partial #1}{\partial #2}}
\newcommand{\A}{\mathbb{A}}
\newcommand{\N}{\mathbb N}
\newcommand{\C}{\mathbb C}
\newcommand{\Q}{\mathbb Q}
\newcommand{\M}{\mathcal{M}}
\newcommand{\cA}{\mathcal{A}}
\DeclareMathOperator{\lcm}{lcm}
\DeclareMathOperator{\Aut}{Aut}
\DeclareMathOperator{\Conj}{Conj}
\DeclareMathOperator{\Fix}{Fix}
\DeclareMathOperator{\Per}{Per}
\DeclareMathOperator{\PGL}{PGL}
\DeclareMathOperator{\PSL}{PSL}
\DeclareMathOperator{\Proj}{Proj}
\def\PSLp{\PSL_2(\Fp)}
\def\PSLq{\PSL_2(\FF_q)}
\def\PGLq{\PGL_2(\FF_q)}
\newcommand{\Fpbar}{{\bar{\FF}}_p}
\newcommand{\PGLpbar}{\PGL_2(\Fpbar)}
\DeclareMathOperator{\SL}{SL}
\DeclareMathOperator{\GL}{GL}
\DeclareMathOperator{\Spec}{Spec}
\DeclareMathOperator{\Rat}{Rat}
\title{Automorphism Groups of Endomorphisms of $\mathbb{P}^1 (\bar{\mathbb{F}}_p)$}
\author{Julia Cai}
\address{Department of Mathematics,
         Yale University.
         New Haven, CT 06520}
\email{julia.cai@yale.edu}
\author{Benjamin Hutz}
\address{Department of Mathematics and Statistics,
         Saint Louis University,
         St.~Louis, MO 63103}
\email{benjamin.hutz@slu.edu}
\author{Leo Mayer}
\address{Department of Mathematics,
         Lawrence University,
         Appleton, WI 54911}
\email{leopold.e.mayer@lawrence.edu}
\author{Max Weinreich}
\address{Department of Mathematics,
         Brown University,
        Providence, RI 02912}
\email{max@math.brown.edu}
\subjclass[2010]{
37P25,  
37P05,   	
(37P45)   	
}
\keywords{dynamical system, finite field, automorphism}
\thanks{Max Weinreich was supported by a National Science Foundation Graduate Research Fellowship under Grant No. 2040433.}
\begin{document}

\maketitle
\begin{abstract}
    For any algebraically closed field $K$ and any endomorphism $f$ of $\mathbb{P}^1(K)$ of degree at least 2, the automorphisms of $f$ are the M\"obius transformations that commute with $f$, and these form a finite subgroup of $\operatorname{PGL}_2(K)$. In the moduli space of complex dynamical systems, the locus of maps with nontrivial automorphisms has been studied in detail and there are techniques for constructing maps with prescribed automorphism groups that date back to Klein. We study the corresponding questions when $K$ is the algebraic closure $\bar{\mathbb{F}}_p$ of a finite field.  We use the classification of finite subgroups of $\operatorname{PGL}_2(\bar{\mathbb{F}}_p)$ to show that every finite subgroup is realizable as an automorphism group. To construct examples, we use methods from modular invariant theory. Then, we calculate the locus of maps over $\bar{\mathbb{F}}_p$ of degree $2$ with nontrivial automorphisms, showing how the geometry and possible automorphism groups depend on the prime $p$.
\end{abstract}

\section{Introduction}

Let $K$ be an algebraically closed field. A dynamical system of degree $d$ on the projective line is an endomorphism of $\PP^1(K)$ and can be represented in coordinates as a pair of homogeneous polynomials of degree $d$ with coefficients in $K$ and no common factors. We assume throughout that $d \geq 2$. The set of all such dynamical systems is denoted $\Rat_d$. There is a natural conjugation action on $\Rat_d$ by automorphisms of $\PP^1$, the group $\PGL_2$, given as
\begin{equation*}
    f^{\alpha} = \alpha^{-1} \circ f \circ \alpha \quad \text{for $f \in \Rat_d$ and $\alpha \in \PGL_2$.}
\end{equation*}
The quotient by this action, see Silverman \cite{Silvermana}, is the \emph{moduli space of dynamical systems of degree $d$},
$$\M_d:= \Rat_d/\PGL_2.$$
We use square brackets to distinguish between a map $f$ in $\Rat_d$ and its conjugacy class $[f]$ in $\M_d$.
An \emph{automorphism} (or \emph{symmetry}) of $f$ is an element $\alpha$ of $\PGL_2(K)$ such that
$$ f^\alpha = f.$$
The set of such $\alpha$ is a subgroup of $\PGL_2(K)$, called the \emph{automorphism group} of $f$. We denote it $\Aut(f)$. Since these automorphisms have finite invariant sets of points, such as the periodic points of some fixed period, the automorphism group of a given map must be finite.

Our objects of study are those maps $f$ for which $\Aut(f)$ is nontrivial: that is, those $f$ which have an automorphism besides the identity. As is the case with elliptic curves that have complex multiplication, dynamical systems with nontrivial automorphisms can feature exceptional properties. For instance, a complex dynamical system with icosahedral symmetry was used to solve the quintic through iteration \cite{DM}.

We will need to know how conjugation affects automorphism groups. Given $\sigma \in \PGL_2$, the conjugation action on $\Aut(f)$ defined by $\alpha \mapsto \alpha^\sigma$ defines a group isomorphism
$$\Aut(f) \cong \Aut(f^\sigma).$$
The conjugacy class of $\Aut(f)$ in $\PGL_2$ is, thus, a well-defined invariant of $[f]$. When we speak of the automorphism group associated to $[f]$, we understand this group to be well-defined only up to conjugacy.

In particular, the locus of rational maps with a nontrivial automorphism group descends to a well-defined subset of $\M_d$. We call this set the \emph{automorphism locus of $\M_d$}, denoted $\cA_d$. Note that conjugation may affect the field of definition of both the map and its automorphism group, and determining the minimal field of definition of a conjugacy class and/or its automorphism group can often be a delicate question, e.g., \cite{Hutz15, Silverman12}.

In this article, we initiate the study of dynamical systems with nontrivial automorphisms over finite fields and their algebraic closures.
Specifically, we address the following pair of questions;
\begin{enumerate}
    \item How can we construct examples of dynamical systems over $\Fpbar$ with nontrivial automorphisms, and which automorphism groups can arise?
    \item What is the structure of the automorphism locus $\cA_2$ in the moduli space $\M_2 (\Fpbar)?$
\end{enumerate}
We fully resolve the realizability problem.
\begin{thm}\label{thm_0}
    Every finite subgroup of $\PGL_2(\Fpbar)$ occurs as the automorphism group of some dynamical system.
\end{thm}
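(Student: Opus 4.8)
The plan is to reduce the problem, via Dickson's classification of the finite subgroups of $\PGL_2(\Fpbar)$, to constructing for each such group $G$ a single endomorphism $f$ of degree $\ge 2$ with $\Aut(f)=G$. Up to conjugacy the groups $G$ are: $C_m$ with $p\nmid m$; $D_n$ with $p\nmid n$; $A_4$, $S_4$, $A_5$ for those $p$ admitting an embedding; an elementary abelian $p$-group $(\Z/p)^k$; a semidirect product $(\Z/p)^k\rtimes C_m$ with $m\mid p^k-1$, realized as $U\rtimes\langle\mu\rangle$ for an $\FF_p$-subspace $U\subseteq\Fpbar$ of dimension $k$ and a $\mu$ of order $m$ with $\mu U=U$; and $\PSL_2(\FF_q)$ or $\PGL_2(\FF_q)$ for $q=p^e$. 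The organizing device is the dictionary: fixing a finite lift $\widetilde G\le\GL_2$ of $G$, a map $f=[F_0:F_1]\in\Rat_d$ commutes with all of $G$ exactly when $F=(F_0,F_1)$ is a homogeneous degree-$d$ self-map of $\A^2$ that is $\widetilde G$-equivariant up to a character, i.e.\ $F\circ\widetilde g=\chi(\widetilde g)\,\widetilde g\circ F$ for a fixed character $\chi$ of $\widetilde G$; for each $\chi$ such $F$ form a linear space $R^{G,\chi}_d$, and $\Rat_d^G$ is the union over $\chi$ of the open subsets where $\gcd(F_0,F_1)=1$.

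For the constructions I would treat the groups in families. For $C_m$ ($p\nmid m$) take $f(z)=z^{m+1}+z$; for $D_n$ ($p\nmid n$) take $f(z)=\dfrac{z^{2n+1}+az^{n+1}}{1+az^n}$ for generic $a$. For $(\Z/p)^k=U$ and, more generally, $U\rtimes\langle\mu\rangle$, let $L_U(z)=\prod_{u\in U}(z-u)$ be the separable additive polynomial with kernel $U$ and take $f_c(z)=z+L_U(z)+c\,L_U(z)^{m+1}$ for generic $c\ne 0$ (the pure $p$-group being $m=1$). For $\PGL_2(\FF_q)$ take Frobenius $f(z)=z^q$; this also realizes $S_4\cong\PGL_2(\FF_3)$ in characteristic $3$ and $A_5\cong\PGL_2(\FF_4)$ in characteristic $2$. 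For $\PSL_2(\FF_q)$ with $q$ odd — which includes $A_4\cong\PSL_2(\FF_3)$ in characteristic $3$ and $A_5\cong\PSL_2(\FF_5)$ in characteristic $5$ — I would use modular invariant theory: $\FF_q[x,y]^{\SL_2(\FF_q)}=\FF_q[\phi,\psi]$ with $\deg\phi=q+1$ and $\deg\psi=q^2-q$, where $\phi=x^qy-xy^q$ has $\GL_2(\FF_q)$-character $\det$ while $\psi$ is a Dickson invariant, and the module of $\SL_2(\FF_q)$-covariants $\A^2\to\A^2$ is free of rank two on the identity $(x,y)$ and Frobenius $(x^q,y^q)$; one then selects a covariant $F=a(\phi,\psi)(x,y)+b(\phi,\psi)(x^q,y^q)$ with $b\ne 0$ in a degree for which $a$ or $b$ fails to be $\GL_2(\FF_q)$-semi-invariant, so that $[F]$ commutes with $\PSL_2(\FF_q)$ but not with the nontrivial coset of $\PGL_2(\FF_q)$. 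Finally, for $A_4$, $S_4$, $A_5$ in characteristics not dividing the order (and for the exceptional $A_5$, e.g.\ in characteristic $3$, using the face form of degree $20$, whose degree is prime to $p$), the classical Klein construction from the ground forms of the corresponding tessellation, applied verbatim over $\Fpbar$, produces the covariant.

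The remaining task is to verify that each constructed $f$ has automorphism group exactly $G$, not larger. For the explicit families this is a direct computation with the fixed points and their multipliers and local degrees: $z^{m+1}+z$ has only a parabolic fixed point and a totally ramified super-attracting one, so every automorphism fixes both and is a scaling, which forces $\Aut(f)=C_m$; for the dihedral $f$ the fixed set partitions (generically) into $\{0,\infty\}$, the $n$th roots of unity, and the remaining $2n$th roots of unity, each preserved by $\Aut(f)$, cutting it down to $D_n$; for $f_c$ the set $U\cup\{\infty\}$ is preserved with $\infty$ fixed, so $\Aut(f_c)$ lies in a Borel $\{z\mapsto\lambda z+b\}$, and one checks its translation part is exactly $U$ and its multiplier part exactly $\mu_m$; and $\Aut(z^q)$ consists precisely of the M\"obius maps fixed by $q$-power Frobenius, namely $\PGL_2(\FF_q)$. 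For the invariant-theoretic constructions I would instead use a genericity argument: if $\Aut(f)\supsetneq G$, there is $\alpha\in\Aut(f)\setminus G$ with $H:=\langle G,\alpha\rangle$ strictly larger and finite, so $f\in\Rat_d^H\subseteq\Rat_d^G$; since $|\Aut(f)|$ is bounded in terms of $d=\deg f$ (automorphisms act faithfully on a fixed finite set of low-period periodic points) and the simultaneous conjugacy type of $(G\subset H)$ is determined by that action, only finitely many $H$ arise, and for a suitable character $\chi$ and degree $d$ the locus in $R^{G,\chi}_d$ where $F$ is $\widetilde H$-semi-invariant (for some such $H$) is a proper closed subset; a Zariski-general $F\in R^{G,\chi}_d$ therefore avoids all of these and has coprime components, giving $\Aut([F])=G$.

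The main obstacle is the characteristic-$p$ behavior. Maschke averaging is unavailable, the most natural equivariant maps (Frobenius powers and their relatives) are over-symmetric, and one needs genuine input from modular invariant theory — Dickson's theorem and the structure of the covariant modules of $\SL_2(\FF_q)$, $\GL_2(\FF_q)$, and of unipotent groups and their normalizers — to pin down covariants equivariant for $G$ but for no larger group; one must also re-establish, in the modular setting, the bound on $|\Aut(f)|$ in terms of $\deg f$ and the finiteness of the relevant over-groups (where $p$-subgroups, though bounded for fixed degree, can be large). The subtlest cases are the platonic groups in the characteristics dividing their orders, but these largely collapse onto cases already handled, via $A_4\cong\PSL_2(\FF_3)$, $S_4\cong\PGL_2(\FF_3)$, $A_4\cong(\Z/2)^2\rtimes C_3$ in characteristic $2$, $A_5\cong\PSL_2(\FF_4)$ in characteristic $2$, $A_5\cong\PSL_2(\FF_5)$ in characteristic $5$, together with the non-embeddability of $S_4$ in characteristic $2$.
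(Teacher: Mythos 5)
Your overall architecture---run through the classification of finite subgroups, write down an explicit map for each family, and pin down the exact automorphism group via fixed points, multipliers, and ramification, reserving modular invariant theory for $\PSL_2(\FF_q)$---is the same as the paper's, and your constructions for $C_m$, for $\PGL_2(\FF_q)$ via $z^q$, and for $\PSL_2(\FF_q)$ via the Dickson invariants match or closely parallel theirs. But there is a genuine gap in the case that carries most of the $p$-irregular weight, the $p$-semi-elementary groups $U\rtimes\langle\mu\rangle$. Your candidate $f_c(z)=z+L_U(z)+c\,L_U(z)^{m+1}$ satisfies $f_c(z)-z=L_U(z)\bigl(1+c\,L_U(z)^{m}\bigr)$, so besides the points of $U$ it has $p^k m$ further finite fixed points. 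Since $L_U'$ is the nonzero constant $a_0$ (the linear coefficient of the additive polynomial $L_U$), the multiplier is $1+a_0$ on $U$ and $1-ma_0$ on the extra fixed points; these coincide exactly when $p\mid m+1$, and in that case your multiplier argument cannot isolate $U$, so the claim that the translation part of $\Aut(f_c)$ is exactly $U$ is unsupported. Worse, when $m+1$ is a power of $p$ the polynomial $f_c$ is additive (each of $z$, $L_U$, $L_U^{p^j}$ is additive), so $z\mapsto z+t$ is an automorphism for every root $t$ of the separable additive polynomial $f_c-\mathrm{id}$, of which there are $p^k(m+1)>|U|$; thus $\Aut(f_c)$ strictly contains $U\rtimes\mu_m$ for \emph{every} $c\neq 0$, and genericity in $c$ cannot rescue the construction. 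This failure hits essential cases: the Borel groups $B(\FF_q)=\FF_q\rtimes\FF_q^{\times}$ (take $m=q-1$), every elementary abelian group in characteristic $2$ (take $m=1$, where $f_c=z+L_U+cL_U^2$ is additive), and $A_4\cong(\Z/2)^2\rtimes C_3$ in characteristic $2$ (take $m=3$), the last of which you explicitly route through this construction.

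The repair is to arrange that the fixed locus is exactly $U\cup\{\infty\}$. The paper takes $f(z)=\prod_{u\in U}(z-u)^{m+1}+z=L_U(z)^{m+1}+z$, so that $f-\mathrm{id}$ vanishes only on $U$, each point of $U$ is a fixed point of multiplicity $m+1\geq 2$ (hence multiplier $1$), and $\infty$ is the unique fixed point of multiplier $0$. Every automorphism therefore fixes $\infty$ and is affine, $z\mapsto az+b$; comparing the root sets of $f(az+b)-(az+b)$ and $a\bigl(f(z)-z\bigr)$ shows $z\mapsto az+b$ permutes $U$, so $b\in U$ and $aU=U$, whence $a^{p^k}=a$, and the leading-coefficient comparison $a^{p^k(m+1)}=a$ then gives $a^m=1$. (Separately: your genericity argument for excluding extra automorphisms of the invariant-theoretic constructions still needs dimension bounds for the relevant modular semi-invariant spaces, which the paper avoids by direct verification together with the constraint $\deg(f)\equiv -1,0,1 \pmod p$ for any map admitting an automorphism of order $p$; and your generic-$a$ dihedral family needs the multiplier separation among the three fixed-point classes actually checked. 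These are fillable sketches, unlike the semi-elementary construction, which must be replaced.)
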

We do not place restrictions on the degrees of the maps which realize the automorphism groups. However, in many cases, we prove that the given map has the smallest degree among all maps of degree $d \geq 2$ that realize a given automorphism group. We say such a map is \emph{of minimal degree} for that group. Explicit constructions and details are given in Theorem \ref{thm_realizable_p_irregular} and Theorem \ref{thm_realizable_p_regular}.

The methods used previously to construct dynamical systems with nontrivial automorphisms and to study automorphism loci depend on characteristic 0 in fundamental ways, opening the possibility that new phenomena emerge when we change the base field to a finite subfield of $\bar{\FF}_p$.
We investigate these new phenomena, emphasizing how our methods and results contrast with characteristic 0.

To provide context, we briefly describe some of what is known about $\cA_d$ in the complex case. As mentioned earlier, the automorphism group is a finite subgroup of $\PGL_2$, so the classification of such subgroups is important. In characteristic $0$, the finite subgroups of $\PGL_2$ were classified classically. For a modern exposition, see \cite{Silverman12}.

\begin{notation}
We set notation for referring to various groups.
\begin{itemize}
    \item Let $1$ denote the trivial group.
    \item Let $C_n$ denote the cyclic group of $n$ elements, for each $n \geq 2$.
    \item Let $D_{2n}$ denote the dihedral group of $2n$ elements, for each $n \geq 2$.
    \item Let $A_4$ denote the tetrahedral group.
    \item Let $S_4$ denote the octahedral group.
    \item Let $A_5$ denote the icosahedral group.
\end{itemize}
\end{notation}

The above are a complete list of finite subgroups of $\PGL_2(\C)$, up to conjugacy. The general problem of which subgroups of $\PGL_2(\C)$ can be realized as an automorphism group for some $f \in \Rat_d$ relies on tools from the classical invariant theory of finite groups; see \cite{Hutz15}, as well as partial results found in a number of other places, such as \cite{Silverman12}.

The problem of determining the locus $\cA_d(\C)$ has been studied in a number of articles \cite{Fujimura, charzero, MSW, Milnor, West}. The automorphism locus $\cA_d(\C)$ forms a Zariski closed proper subset of $\M_d(\C)$. In fact, for $d > 2$, the automorphism locus coincides with the singular locus of $\M_d(\C)$ \cite{MSW}. The case $d = 2$ stands in contrast: Milnor showed that $\M_2(\C)$ is isomorphic as a variety to the affine plane $\A^2(\C)$, which is smooth, and that the automorphism locus $\cA_2 (\C)$ is a cuspidal cubic curve \cite{Milnor}. The points of $\cA_2 (\C)$ all have an automorphism group isomorphic to $C_2$, except at the cusp, where the automorphism group is isomorphic to the symmetric group $S_3$. The descriptions of $\cA_3(\C)$ and $\cA_4(\C)$ are more recent and more complicated \cite{charzero, West}. The best results currently available for $\cA_d(\C)$ with $d \geq 5$ mostly focus on the dimensions of the various components \cite{MSW}.

We first study which automorphism groups are realizable. Among the finite subgroups $\Gamma$ of $\PGL_2$, which arise as automorphism groups of rational maps? We call this question the \emph{realizability problem} for $\Gamma$. If $\Gamma$ is realizable, so are its conjugates; thus, it suffices to look at one representative per conjugacy class. Our next theorems construct solutions to the realizability problem for every finite subgroup $\Gamma$ of $\PGL_2(\bar{\FF}_p)$.

We first review what is known in the complex case.
Miasnikov-Stout-Williams \cite{MSW} give the dimensions of the components of $\cA_d(\CC)$ associated to each finite $\Gamma \subset \PGL_2(\CC)$. They do not, however, give any explicit realizations or explore arithmetic questions, such as the necessary field of definition. The strongest results in this direction come from deFaria-Hutz \cite{Hutz15}.
They prove that every finite subgroup of $\PGL_2(\CC)$ is realizable as a subgroup of the automorphism group infinitely often (allowing the degree of the map to increase). This construction is explicit and relies on the classical invariant theory of finite groups.

In characteristic $p>0$, much less is known. While the classification of finite subgroups of $\PGL_2 (\Fpbar)$ is classical, the unpublished version by Faber \cite{Faber} in modern notation is the most readable. For each prime $p$, each conjugacy class for each subgroup supplies a case of the realizability problem. We summarize the classification in Proposition \ref{prop_faber}.

\begin{defn}
A finite subgroup of $\PGL_2(\Fpbar)$ is called \emph{$p$-regular} if $p$ does not divide the group order; otherwise, it is called \emph{$p$-irregular.}
\end{defn}

\begin{defn}
For each power $q$ of a prime $p$, the \emph{Borel group} $B(\FF_q)$ is the group of upper triangular matrices in $\PGL_2(\FF_q)$.
A \emph{$p$-semi-elementary} group is one that is the semi-direct product of a Sylow $p$-subgroup of order $p$ and a cyclic subgroup.
\end{defn}

\begin{prop}[{{Faber \cite{Faber}}}]\label{prop_faber}
Let $p$ be a prime. Each finite subgroup $\Gamma$ of $\PGL_2(\bar{\FF}_p)$ belongs to one of the following isomorphism types:
\begin{itemize}
    \item The identity group $1$;
    \item The cyclic group $C_n$, for each $n \geq 2$;
    \item The dihedral group $D_{2n}$, for each $n \geq 2$;
    \item The tetrahedral group $A_4$;
    \item The icosahedral group $A_5$;
    \item The octahedral group $S_4$;
    \item The group $\PGL_2(\FF_q)$, for some power $q$ of $p$;
    \item The group $\PSL_2(\FF_q)$, for some power $q$ of $p$;
    \item A $p$-semi-elementary group conjugate to a subgroup of the Borel group $B(\FF_q)$, for some power $q$ of $p$.
\end{itemize}
Except for $p$-semi-elementary groups, each possible isomorphism type occurs as at most one conjugacy class in $\PGL_2(\bar{\FF}_p)$.

For each power $q$ of $p$, each subgroup of $B(\FF_q)$ is of the form
$$ \{z \mapsto \alpha z + \beta : \; \alpha \in \mu, \; \beta \in \Lambda\}, $$
where $\mu$ is a subgroup of $\FF^\times_q$ of some order $n$, and $\Lambda$ is a subgroup of $\FF^+_q$ such that $\mu(\Lambda) \subseteq \Lambda$. A subgroup of $\PGL_2(\FF_q)$ is $p$-semi-elementary if and only if it is conjugate to a subgroup of $B(\FF_q)$ for which $\Lambda \neq 0$.
\end{prop}
Not every group named in Proposition \ref{prop_faber} appears for every prime, and for some small primes, there are accidental isomorphisms between some of the possible groups. The precise classification of subgroups of $\PGL_2(\bar{\FF}_p)$ up to conjugacy is given in the Appendix.

The next two theorems resolve the realizability question for $p$-irregular and $p$-regular subgroups, respectively. Together, the theorems show by explicit constructions that every finite subgroup of $\PGL_2$ arises as an automorphism group (Theorem \ref{thm_0}). For certain groups, we show that our constructions furnish maps which are of minimal degree among all maps with the prescribed automorphism group.

\begin{thm} \label{thm_realizable_p_irregular}
    Let $p$ be a prime, and let $q$ be a power of $p$. Let $\Gamma$ be a finite $p$-irregular subgroup of $\PGL_2(\bar{\FF}_p)$. Then there exists a rational map $f: \PP^1(\Fpbar)\to\PP^1(\Fpbar)$ with $\Aut(f) = \Gamma$. In particular, such a map $f$ can be constructed for each $\Gamma$ as follows.
    \begin{enumerate}
        \item \label{thm_realizable_pgl}
        Let $f(z) = z^q$. Then $\Aut(f) = \PGL_2(\FF_q)$, and $f$ is of minimal degree for $\PGL_2(\FF_q)$.

        \item \label{thm_realizable_semielementary}
            Let $\Gamma$ be a $p$-semi-elementary subgroup with associated additive group $\Lambda$ and integer $n$ in the form of Proposition \ref{prop_faber}. Then
            $$ f(z) = \prod_{\lambda \in \Lambda} (z - \lambda)^{n+1} + z $$
            satisfies $\Aut (f) = \Gamma$. In particular, if $\Lambda = \FF_q$, then
            $$ f(z) = {(z^q - z)}^{n + 1} + z.$$
        \item \label{thm_realizable_psl}
            If $p > 2$, then $\PSL_2(\FF_q)$ is distinct from $\PGL_2(\FF_q)$. In this case, there exists a map $f$ such that
            $$\Aut(f) = \PSL_2(\FF_q).$$
            We construct such an $f$ of degree $\frac{1}{2}(q^3 - 2q^2 + q + 2)$. Consider the two fundamental invariants of $\SL_2(\FF_q)$:
            \begin{align*}
                u &= x^qy - xy^q,\\
                c_1 &= \sum_{n = 0}^{q} x^{(q-1)(q-n)}y^{(q-1)n}.
            \end{align*}
            Also set
            $$a = \frac{q(q-3) + 4}{2}, \qquad b = \frac{q-1}{2}. $$
            Then take $f$ to be the dynamical system that arises from the Doyle-McMullen construction \eqref{eq_DM} applied to $F=c_1^b$ and $G=u^a$; that is,
            \begin{equation*}
                f(x,y) = \left[ x c_1^b + \frac{\partial u^a}{\partial y} : y c_1^b - \frac{\partial u^a}{\partial x}\right].
            \end{equation*}
            This $f$ is of minimal degree for $\PSL_2(\FF_q)$.
        \item \label{thm_realizable_irreg_dih_2}
        Let $p = 2$, and let $n \geq 3$ be odd. Then $f(z) = 1/z^{2n-1}$ has $\Aut(f) \cong D_{2n}$.
        \item \label{thm_realizable_irreg_a5}
            Let $p = 3$. There is a unique $p$-irregular subgroup of $\PGL_2(\bar{\FF}_3)$ isomorphic to $A_5$, up to conjugacy. There exists a map $f$ such that $\Aut(f) \cong A_5$. Specifically, there is a representation of $A_5$ in $\PGL_2(\bar{\FF}_3)$ with fundamental invariants
            \begin{align*}
                u_1 &= x^{10} + i y^{10},\\
                u_2 &= x^{11}y + (i+2) x^6 y^6 - i xy^{11},
            \end{align*}
            where $i \in \bar{\FF}_3$ satisfies $i^2 + 1 = 0.$
            Let $f$ be the dynamical system arising from the Doyle-McMullen construction \eqref{eq_DM} applied to $F = u_1^2$ and $G = u_1 u_2$, that is,
            \begin{equation*}
                f(x,y) = \left[ x u_1^2 - \dd{(u_1 u_2)}{y} : y u_1^2 + \dd{(u_1 u_2)}{x} \right].
            \end{equation*}
            Then $f$ has degree 21, and is of minimal degree for $A_5$ in $\PGL_2(\bar{\FF}_3)$.
        \end{enumerate}
\end{thm}

\begin{thm}\label{thm_realizable_p_regular}
    Let $p$ be a prime and $q$ a power of $p$.
    Let $\Gamma$ be a $p$-regular subgroup of $\PGL_2(\FF_q)$. Then there exists a rational map $f: \PP^1(\Fpbar)\to\PP^1(\Fpbar)$ with automorphism group exactly $\Gamma$. In particular, such a map $f$ can be constructed for each $\Gamma$ as follows.
    \begin{enumerate}
        \item The map $f(z) = z^2 + z$ has $\Aut(f) = 1$, and $f(z)$ is trivially of minimal degree for $\Gamma=1$. \label{p_regular_trivial}
        \item Let $n \geq 2$ be relatively prime to $p$. Then the map $f(z)=\frac{1}{z^{n-1}}+z$ has $\Aut(f)\cong C_n$. Furthermore, this map is of minimal degree for $C_n$. \label{p_regular_cyclic}
        \item Let $p > 2$ be prime and let $n \geq 2$ be coprime to $p$. The realizability problem for $D_{2n}$ over $\PGLpbar$ is solvable through one of the following constructions.\label{p_regular_dihedral}
        \begin{itemize}
            \item If $n \not\equiv -1 \mod p$, then the map $f(z) = z^{n + 1}$ has exact automorphism group $D_{2n}$. 
            \item If $n \not\equiv 1 \mod p$ and $n > 2$, then the map $f(z) = \frac{1}{z^{n - 1}}$ has exact automorphism group $D_{2n}$. This example is of minimal degree for $D_{2n}$.
            \item If $n = 2$, then for every $a \in \bar{\FF}_p$ not in the exceptional set $\{-3, -1, 0, 1 \}$, the map
            $$ f(z) = z \cdot \frac{z^2 + a}{a z^2 + 1}$$
            has $\Aut(f) \cong D_4$, and $f(z)$ is of minimal degree for $D_4$.
        \end{itemize}
        \item The tetrahedral group $A_4$ is realizable as an automorphism group of a degree 3 map over $\Fpbar$, for all $p \geq 5$, and 3 is the minimal degree for $A_4$. \label{p_regular_tetrahedral}
        \item The octahedral group $S_4$ is realizable as an automorphism group over $\Fpbar$, for all $p \geq 5$. \label{p_regular_octahedral}
        \item The icosahedral group $A_5$ is realizable as an automorphism group over $\Fpbar$, for all $p \geq 7$.\label{p_regular_icosahedral}
    \end{enumerate}
\end{thm}

The invariant theory constructions used in deFaria-Hutz \cite{Hutz15} go through in the $p$-regular case, but remain unknown in the modular case (where the characteristic $p$ divides the order of the group). Consequently, the methods used for our realizability results are a combination of adaptations of the invariant theory constructions and ad hoc computations. See the discussion at the beginning of Section \ref{sect_realizable}.

For the $p$-regular case in Theorem \ref{thm_realizable_p_regular}, we take maps in characteristic $0$ with the appropriate automorphism group and reduce modulo $p$; see Section \ref{sect_pregular}. The $p$-irregular case in Theorem \ref{thm_realizable_p_irregular} is more elaborate. The work of Klein \cite{klein_icosa} and Doyle-McMullen \cite{DM} shows that the problem of creating maps over $\C$ with prescribed automorphism group can be framed in terms of classical invariant theory. In the case of characteristic $p$ and a $p$-irregular group of automorphisms, we use modular invariant theory in place of classical invariant theory. Magma can calculate modular invariants \cite{magma}. By generating lots of invariants, we obtained a variety of maps which were candidates for realizing the subgroup in question. Throughout, there is the new difficulty that many maps with some prescribed automorphisms in fact have \emph{extra} automorphisms; that is, the automorphism group is all of $\PGL_2(\FF_q)$. We used the automorphism group calculation algorithm of Faber-Manes-Viray \cite{FMV}, which is implemented in Sage \cite{sage}, to check exactness of the automorphism groups. Examining the computational evidence, we were able to conjecture general forms for solutions and prove them. See Section \ref{sect_pirregular}.

We next study the locus of maps in $\cA_2 (\Fpbar)$ with a non-trivial automorphism.
For a given point $x \in \M_d$, we freely write $\Aut(x) \cong G$ to mean that any map representing $x$ has automorphism group isomorphic to $G$. Many subgroups of $\PGL_2$ arise in just one conjugacy class, so such a description often suffices to describe the conjugacy class $\Aut(x)$.

To state the result, we use the explicit isomorphism $\M_2 \to \A^2$ given by $f \mapsto (\sigma_1, \sigma_2)$, where $\sigma_1$ and $\sigma_2$ are the first two elementary symmetric polynomials evaluated at the multipliers of the fixed points of $f$. This isomorphism was established over $\C$ by Milnor \cite{Milnor} and extended to an isomorphism of schemes over $\Spec \ZZ$ by Silverman \cite[Theorem 5.1]{Silvermana}.

\begin{thm} \label{thm_Ad}
\hfill
The geometry of the automorphism locus $\cA_2(\bar{\FF}_p)$ depends on the prime $p$, in the following way.
\begin{enumerate}
    \item \label{thm1_1} \boxed{p = 2}: The automorphism locus $\mathcal{A}_2(\bar{\FF}_2)$ is the line $\sigma_1=0$.

    For every point $x=(\sigma_1,\sigma_2)$ except $(0,0)$ and $(0,1)$, we have
    $\Aut (x) \cong C_2$. For $x = (0,0)$, we have $\Aut (x) \cong S_3$ and for $x=(0,1)$ we have $\Aut(x)$ is trivial as a subgroup of $\PGL_2$ and isomorphic to $\alpha_2 \cong \bar{\FF}_2[t]/(t^2)$ as a group scheme.

    \item \label{thm1_2} \boxed{p = 3}: The automorphism locus $\mathcal{A}_2(\bar{\FF}_3)$ is the cuspidal cubic curve
        \begin{equation*}
            2\sigma_1^3 + \sigma_1^2\sigma_2 - \sigma_1^2 - \sigma_2^2 - 2\sigma_1\sigma_2=0.
        \end{equation*}
        Every point $x$ has
        $\Aut(x) \cong C_2$.\\
    \item \label{thm1_3} \boxed{p > 3}: The automorphism locus $\mathcal{A}_2(\bar{\FF}_p)$ is the cuspidal cubic curve
        \begin{equation*}
            2\sigma_1^3 + \sigma_1^2\sigma_2 - \sigma_1^2 - 4\sigma_2^2 - 8\sigma_1\sigma_2 + 12\sigma_1 + 12\sigma_2-36 = 0.
        \end{equation*}
        Every point $x$ except the cusp has
        $\Aut(x) \cong C_2$,
        and when $x$ is the cusp, we have
        $\Aut(x) \cong S_3$.
\end{enumerate}
\end{thm}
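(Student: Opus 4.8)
The plan is to reduce everything to two normal forms — a degree-$2$ map commuting with an order-$2$ Möbius transformation, and the single class with the full $S_3$ — to compute the multiplier coordinates $(\sigma_1,\sigma_2)$ along each, and then read off the defining equation of $\cA_2(\bar{\FF}_p)$, its singular point, and the exceptional behavior at $p=2,3$. I begin with the structural facts: a nonidentity element of $\PGL_2$ has at most two fixed points, while $f\in\Rat_2$ has exactly three fixed points counted with multiplicity, so $\Aut(f)$ acts faithfully on the support of the fixed-point divisor and embeds in $S_3$; hence $\Aut(f)\in\{1,C_2,C_3,S_3\}$. An order-$3$ automorphism cyclically permutes three distinct fixed points and equalizes their multipliers, so by the fixed-point index relation $\sigma_3=\sigma_1-2$ (with $\sigma_3$ the third elementary symmetric function of the multipliers) the common multiplier $\lambda$ satisfies $\lambda^3-3\lambda+2=(\lambda-1)^2(\lambda+2)=0$. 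Away from the degenerate value $\lambda=1$ this forces $(\sigma_1,\sigma_2)=(3\lambda,3\lambda^2)$ with $\lambda=-2$, i.e.\ the single point $(-6,12)$; in particular $C_3$ never occurs alone, and it suffices to describe the $C_2$-locus together with this one point.

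Next, the $C_2$ computation. For $p$ odd every involution of $\PGL_2(\bar{\FF}_p)$ is conjugate to $z\mapsto -z$, so a degree-$2$ map with an order-$2$ automorphism is conjugate to an odd one, and normalizing the residual scaling gives the one-parameter family $f_b(z)=\frac{z^2+1}{bz}$ with $b\neq0$. I would compute its fixed points ($\infty$ and $\pm\sqrt{1/(b-1)}$) and multipliers ($\lambda_\infty=b$, with the two swapped fixed points each having multiplier $\tfrac{2-b}{b}$), obtain $\sigma_1=\tfrac{b^2-2b+4}{b}$ and $\sigma_2=2(2-b)+\tfrac{(2-b)^2}{b^2}$, and eliminate $b$ to get the stated cubic (the same integral equation for $p=3$ and for $p>3$, the lower-order terms simply vanishing mod $3$). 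For $p=2$ the involution is instead conjugate to $z\mapsto z+1$; commuting with it forces $f$ to fix $\infty$, hence be a polynomial, and to satisfy $f(z+1)=f(z)+1$, which after normalization yields $f_\alpha(z)=\alpha z^2+(1+\alpha)z$ with $\alpha\neq0$. Here the multipliers are $0$ at $\infty$ and $1+\alpha$ at the two swapped fixed points $0,1$, so $\sigma_1=2(1+\alpha)=0$ and $\sigma_2=(1+\alpha)^2$; as $\alpha$ ranges over $\bar{\FF}_2^\times$ this sweeps out the line $\sigma_1=0$ minus the single point $(0,1)$.

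In each characteristic one then checks that the parametrization is onto the claimed set and, conversely, that off the exceptional locus the parameter is forced to be generic, so $\Aut$ is exactly $C_2$ there: if it were larger it would be $S_3$, forcing all three multipliers equal, which happens only at $b=-2$ (resp.\ $\alpha=1$), and there one verifies directly that $f_{-2}$ is conjugate to $z\mapsto 1/z^2$, whose automorphisms include $z\mapsto\zeta z$ ($\zeta^3=1$) and $z\mapsto 1/z$, giving $\Aut\cong S_3$. This also explains the $p=3$ anomaly: modulo $3$ the cube roots of unity collapse, $z\mapsto 1/z^2$ acquires a single triple fixed point with multiplier $-2=1$, its order-$3$ symmetry disappears, and only $C_2$ survives even at the cusp $(0,0)$. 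For $p=2$ the analogous map $z\mapsto z^2$ (equivalently $z\mapsto 1/z^2$) sits at $(0,0)$ and retains $\Aut\cong S_3$. Finally I would confirm in each case that the curve is irreducible with a unique cusp at the point identified above, by inspecting the lowest-order terms of the defining polynomial there (a perfect square, hence a doubled tangent line).

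The remaining and most delicate point is the behavior at $(0,1)$ in characteristic $2$. The multipliers there are the roots of $t^3+t=t(t+1)^2$, namely $0,1,1$, and the class is represented by $f(z)=z^2+z$, whose fixed-point scheme is $2\cdot(0)+(\infty)$. A direct check shows the only element of $\PGL_2(\bar{\FF}_2)$ commuting with $f$ is the identity, so $(0,1)$ is not in the naive point-set automorphism locus; however the substitution $z\mapsto z+t$ over $\bar{\FF}_2[t]/(t^2)$ conjugates $f$ to itself because $t^2=0$, so the automorphism group scheme of $f$ is $\alpha_2$. Accordingly $(0,1)$ lies in the Zariski closure of the $C_2$-locus — indeed the reduction mod $2$ of the characteristic-zero cuspidal cubic factors as $\sigma_1^2(\sigma_2+1)$, so the honest automorphism locus is the line $\sigma_1=0$, meeting the ``ghost'' line $\sigma_2=1$ precisely at the infinitesimal point $(0,1)$. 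The main obstacle is exactly this characteristic-$2$ scheme-theoretic subtlety — recognizing that $\cA_2(\bar{\FF}_2)$ must include the infinitesimal point and computing the group scheme $\alpha_2$ there — together with the bookkeeping of the several degenerate fixed-point configurations (colliding fixed points, multiplier $1$) that arise at the boundary of the normal-form families.
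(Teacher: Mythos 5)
Your route is genuinely different from the paper's and is essentially sound. The paper works ``top down'': it factors the discriminant of the multiplier polynomial $x^3-\sigma_1x^2+\sigma_2x-(\sigma_1-2)$ as $\Per_1(1)$ times the cubic $S$, proves via Silverman's Normal Forms Lemma that two distinct fixed points with a common multiplier always produce an involution swapping them (Lemma \ref{lem_john}), and then decides pointwise which discriminant points actually carry automorphisms. You work ``bottom up'': classify involutions up to conjugacy in each characteristic ($z\mapsto -z$ for $p$ odd, the unipotent $z\mapsto z+1$ for $p=2$), write the normal form for a quadratic commuting with the standard involution, and push forward along the multiplier map. The computations you give check out ($\lambda_\infty=b$ and $\lambda_\pm=(2-b)/b$ for $f_b=(z^2+1)/(bz)$; $b=-2$ lands on $(-6,12)$; the char-$2$ family sweeps out $\sigma_1=0$ minus $(0,1)$; the mod-$2$ reduction of the cubic is $\sigma_1^2(\sigma_2+1)$), and your approach has a real advantage: any class off the image of the parametrization visibly admits no involution, so you get the complement of $S$ for free, whereas the paper must argue separately about $\Per_1(1)$.

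Two concrete gaps remain. First, your opening claim that $\Aut(f)$ ``acts faithfully on the support of the fixed-point divisor and embeds in $S_3$'' is false precisely in the degenerate configurations: $z+1/z$ has a single fixed point yet $\Aut\cong C_2$. The $S_3$ embedding is only automatic when the three fixed points are distinct; when they collide you must fall back on the preimage arguments (automorphisms permute the fibers of $f$ over fixed points), which is exactly what the paper does along $\Per_1(1)$ and at the triple-fixed-point classes. In particular, excluding automorphisms of odd order on $\Per_1(1)\setminus S$ and at $(3,3)$ --- including, in characteristic $3$, \emph{unipotent} order-$3$ elements conjugate to $z\mapsto z+1$, which fix only one point and therefore evade your three-cycle/equal-multiplier argument --- is genuine work that you have deferred to ``bookkeeping'' and must be carried out for the theorem as stated. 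Second, ``commuting with $z\mapsto z+1$ forces $f$ to fix $\infty$, hence be a polynomial'' needs one more line: a quadratic fixing $\infty$ may still have a finite pole, but its (single) finite pole would have to be invariant under $z\mapsto z+1$, which is impossible. With these repairs your argument closes and recovers the paper's theorem by a different, more explicitly parametrized path.
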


\begin{figure}[h]
    \begin{center}
        \caption{Geometry of $\cA_2(\Fpbar)$}
        \label{fig_geometry}
        \includegraphics[angle=0,scale=.74]{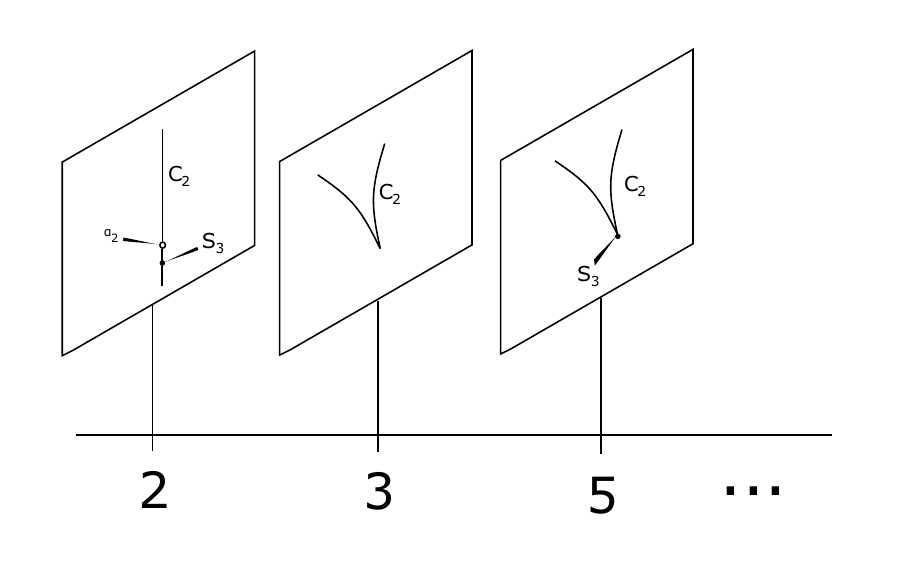}
    \end{center}
\end{figure}

We imagine this theorem in terms of the informal picture in Figure \ref{fig_geometry}. As $p$ varies, we obtain a family of curves.
Automorphism groups that were possible in characteristic 0 can collapse when we reduce modulo certain small primes. This kind of behavior is typical in arithmetic geometry.
More intriguing is, that without considering group schemes, the theorem over $\C$ that $\cA_d$ is Zariski closed fails in characteristic $p$. We can illustrate the phenomenon by the (dehomogenized) one-parameter family in $\Rat_2(\bar{\FF}_2)$ defined by
\begin{equation*}
    f_c(z) = z^2 + cz, \quad c \in \bar{\FF}_2.
\end{equation*}
We show in Section \ref{subsection_autlocusf2} that this family of rational maps forms a line in the moduli space and that the map $z \mapsto z + c - 1$ is an automorphism of $f_c$. This automorphism is nontrivial, unless $c = 1$, in which case the automorphism degenerates to the identity map. The reader can readily check that $\Aut(f_1)$ is trivial as a subgroup of $\PGL_2$. In Section \ref{subsection_autlocusf2}, we compute the automorphism group scheme \cite{FMV} of $f_1$ and find that it is the well-known group scheme $\alpha_2$. While the group of $\alpha_2$ is trivial, its group scheme structure is not.

\begin{question}
As the map $f_c$ varies, so does the nontrivial automorphism it carries. Can we create a moduli space that parametrizes rational maps with a choice of automorphism, and would the analogue of $\cA_d$ in this moduli space be a Zariski closed set? (This line of inquiry was suggested to us by Joseph Silverman.)
\end{question}

The structure of the article is as follows. In Section \ref{sect_realizable} we study the realizability problem and prove Theorem \ref{thm_realizable_p_irregular} and Theorem \ref{thm_realizable_p_regular}. This section starts with an introduction to the methods and proceeds through the cases of $p$-irregular followed by $p$-regular. In Section \ref{sect_dm}, we adapt the structure theorem of Doyle and McMullen \cite{DM} to the setting of modular invariant theory, and we prove that our example for $\PSL_2(\FF_q)$ is of minimal degree, Theorem \ref{thm_realizable_p_irregular}\eqref{thm_realizable_psl}. In Section \ref{sect_Ad} we study the structure of $\cA_d \subset \M_d$ and prove Theorem \ref{thm_Ad}.

The authors thank the Institute for Computational and Experimental Mathematical Research (ICERM) for hosting the summer Research Experience for Undergraduates (REU) program in 2019, where the majority of this work was completed. We also thank Xander Faber and Joe Silverman for helpful conversations and an anonymous referee for many helpful and detailed comments.


\section{Realizability} \label{sect_realizable}
In considering the realizability problem, our constructions are best understood in contrast to the resolution of the realizability problem over $\C$, which we sketch. This story spans centuries: It starts with Klein’s beautiful lectures on the icosahedron \cite{klein_icosa}, is continued in Doyle and McMullen’s work on the quintic \cite{DM}, and concludes in the recent paper by deFaria-Hutz \cite{Hutz15}.

If $f$ is a solution for the realizability problem for $\Gamma$, then for any $\sigma \in \PGL_2$, the conjugated map $\sigma^{-1} \circ f \circ \sigma$ is a solution for $\sigma^{-1} \Gamma \sigma$. So, to solve the realizability problem in general, we need only consider one representative of each conjugacy class of $\Gamma$ in $\PGL_2$. The finite subgroups of $\PGL_2(\C)$ were classified up to conjugacy by Klein \cite{klein_icosa}; a more modern version can be found in Silverman \cite{Silverman12}. The finite subgroups of $\PGL_2(\C)$ belong to one of the following isomorphism types:
\begin{itemize}
    \item a cyclic group $C_n$;
    \item a dihedral group $D_{2n}$;
    \item the tetrahedral group $A_4$;
    \item the octahedral group $S_4$;
    \item the icosahedral group $A_5$.
\end{itemize}
Each isomorphism type arises as just one conjugacy class in $\PGL_2(\C)$.

Klein’s strategy for creating maps with symmetry rested on what is now known as the classical invariant theory of finite groups. Roughly, classical invariant theory is an algorithm which takes as input a $\C$-vector space $V$ and a group representation $\Gamma \hookrightarrow \GL(V)$, and outputs information about the homogeneous elements of the polynomial algebra $\C[V]$ which are fixed by all the transformations in $\Gamma$. In other words, classical invariant theory calculates the set of homogeneous $F \in \C[V]$ such that for all $\gamma \in \Gamma$, we have
\begin{equation*}
    F \circ \gamma = F.
\end{equation*}
The set of such $F$ forms a ring, called the \emph{ring of polynomial invariants}, and is denoted $\C[V]^{\Gamma}$. A basic method used in classical invariant theory to furnish polynomial invariants is to use the Reynolds operator, which is the projection $\C[V] \to \C[V]^{\Gamma}$ defined by
    \begin{equation*}
        F \mapsto \frac{1}{|\Gamma|} \sum_{\gamma \in \Gamma} (F \circ \gamma).
    \end{equation*}
The first interesting example takes $V = \C^2$ and $\Gamma$ to be the representation of $C_2$, which maps the non-identity element to $\left[\begin{smallmatrix} 0 & 1 \\ 1 & 0 \end{smallmatrix}\right]$.
Then $\C[V]^\Gamma$ is the ring of homogeneous symmetric polynomials in two variables.

Klein found, and the reader may directly check, that given a homogeneous polynomial in two variables invariant under the action of $\Gamma$, i.e., $F \in \C[V]^\Gamma$, the map $f : \PP^1 \to \PP^1$ defined in coordinates by
\begin{equation*}
    [x : y] \mapsto \left[ -\frac{\partial F}{\partial y} : \frac{\partial F}{\partial x} \right]
\end{equation*}
satisfies $\Gamma \subseteq \Aut(f)$. Doyle and McMullen derived another more general construction, again using classical invariant theory, which creates maps with automorphism group containing $\Gamma$ \cite{DM}. Specifically, given two invariants $F,G$ with degrees satisfying $\deg(F)= \deg(G)+2$ (or $G=0$), the map is given by
\begin{equation}\label{eq_DM}
    [x : y] \mapsto \left[ \frac{xG}{2}+\frac{\partial F}{\partial y} : \frac{yG}{2} - \frac{\partial F}{\partial x} \right].
\end{equation}
They also prove analytically that every dynamical system with automorphism group containing $\Gamma$ arises from their construction. With this machine for creating dynamical systems with symmetries, the only concern is that we might not exactly have $\Gamma = \Aut(f)$. To be sure we have a solution to the realizability problem, we must check against the existence of \emph{extra automorphisms.}  De Faria and Hutz used in \cite{Hutz15} this machinery to solve the realizability problem over $\C$ as well as to produce infinite families where every member of the family has automorphism group containing $\Gamma$.

Now we replace the base field $\C$ by $\bar{\FF}_p$ and explain how the above story morphs at each step.
\begin{itemize}
    \item As shown by the classification of Faber \cite{Faber}, there are many more conjugacy classes to test.

    \item If $\Gamma$ is $p$-regular, the same formula for the Reynolds operator works, and much of the classical theory over $\C$ carries over with minor modification. But if $\Gamma$ is $p$-irregular, the Reynolds operator is unavailable, and it can be computationally more difficult to locate polynomial invariants. This suggests the basic dichotomy present in modern commutative algebra between \emph{modular invariant theory} (the case where $p$ divides $|\Gamma|$) and its complement \emph{nonmodular invariant theory}. For an excellent reference that emphasizes this dichotomy, see \cite{smith}. Our investigation opens a new field of application for modular invariant theory. In particular, any work on the realizability problem in higher dimensions will probably require a deeper description of modular invariants than is presently available.

    \item The Klein and Doyle-McMullen constructions, which are the bridge from invariant theory to dynamics, may fail for various reasons in characteristic $p$. For instance, if we attempt the Doyle-McMullen construction with $F(x,y) = 0, G(x,y) = x^p + y^p$, we obtain the nonsense map $[0:0]$. Evidently, some constraints on degree are necessary. Even so, if the construction actually produces a valid map of degree at least 2, then it is easy to check that $\Gamma \subseteq \Aut (f)$.

    The converse---that all maps with $\Gamma \subseteq \Aut (f)$ arise from the Doyle-McMullen construction---is much harder to see, and some subtleties particular to positive characteristic arise. We build up the theory of this correspondence in Section \ref{sect_dm}, with our analogue of the Doyle-McMullen correspondence presented as Theorem \ref{thm_dmcharp}.

    \item Over $\C$, the central task is writing down an example $f$ such that $\Gamma \subseteq \Aut(f)$, and the problem of extra automorphisms has been addressed for a few special cases. Over $\bar{\FF}_p$, the problem of extra automorphisms is in some sense the whole point. We will see that the automorphism group of $f(z) = z^q$ is $\PGL_2(\FF_q)$, and every finite subgroup $\Gamma$ of $\PGL_2(\bar{\FF}_p)$ is contained in $\PGL_2(\FF_q)$ for a large enough choice of $q$. For each prime power $q$, this gives us a single example $f$  such that $\Gamma \subseteq \Aut(f)$ for every finite subgroup $\Gamma$ of $\PGL_2(\FF_q)$. So, the difficulty arises in how to create maps $f$ with some prescribed symmetries without picking up lots of others.
\end{itemize}

The following essential proposition, due to Faber-Manes-Viray, links the existence of automorphisms of certain order to the degree of the map.
\begin{prop}[\cite{FMV}, Proof of Proposition~2.4] \label{prop_degtest}
    Let $p$ be a prime, let $n \in \N$, and let $f: \PP^1 \to \PP^1$ be a rational map over $\bar{\FF}_p$ admitting an automorphism of order $n$. Then
\begin{equation}\label{degtest}
    \deg(f) \equiv -1, 0, 1 \mod{n}.
\end{equation}
If $n = p$, then up to conjugation, we further have $f(z) = \psi(z^p-z)+z$ for some rational map $\psi$, and
\begin{equation}\label{degtest_primepower}
    \deg(f) \equiv 0, 1 \mod{p}.
\end{equation}
\end{prop}


\subsection{Realizability of $p$-irregular subgroups of $\PGL_2(\FF_q)$} \label{sect_pirregular}

\subsubsection{Realizing $\PGL_2(\FF_q)$}
We now show that, for any power $q$ of a prime $p$, the group $\PGL_2(\FF_q)$ is realizable over $\FF_p$.
\begin{proof}[Proof of Theorem \ref{thm_realizable_p_irregular} part \ref{thm_realizable_pgl}]
    Let $f(z) = z^q$. For any rational map $g \in \bar{\FF}_q(z)$, we have $g(z^q) = g(z)^q$ if and only if $g$ is defined over $\FF_q$. Restricting $g$ to be degree 1, we find that $\Aut(g) = \PGL_2(\FF_q)$. We further claim that any map with degree at least $2$ and with automorphism group $\PGL_2(\FF_q)$ has degree at least $q$. To see this, let $g$ be a such a map. Let $a \in \FF_q$ be an affine fixed point of $g$; such a point exists since any rational map of degree at least 2 has at least 2 fixed points. For every $b \in \FF_q$, we have an automorphism $z \mapsto z + b$ of $g$, so every point of the form $a + b$ where $b \in \FF_q$ is fixed by $g$. Since $g$ has at least $q$ fixed points, we deduce that $\deg g \geq q - 1$. Further, since $g$ has an automorphism of order $p$, we know that $\deg g \equiv 0$ or $\deg g \equiv 1 \pmod p$, by \eqref{degtest_primepower}. We conclude that $\deg g \geq q$.
\end{proof}

Somewhat surprisingly, for any prime power $q$, the map $\frac{1}{z^q}$ also has an exact automorphism group isomorphic to $\PGLq$, because $z^q$ is conjugate to $\frac{1}{z^q}$. In fact, it is a quadratic twist.
    \begin{prop}
        For any prime power $q$, let $\zeta_{q+1}$ be a primitive $(q+1)$-th root of unity, and let
        $\tau =\begin{pmatrix} 1 & \zeta_{q+1} \\ \zeta_{q+1} & 1 \end{pmatrix}$.
        Then $\tau\in\PGL_2(\FF_{q^2})$ and conjugation by $\tau$ maps $f(z)=z^q$ to $f^{\tau}(z)=\frac{1}{z^q}$.
    \end{prop}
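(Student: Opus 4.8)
The plan is to check directly that $\tau$ lies in $\PGL_2(\FF_{q^2})$ and then compute the conjugate $f^\tau = \tau^{-1} \circ f \circ \tau$ in homogeneous coordinates. Write $\zeta = \zeta_{q+1}$. For the membership claim: since $q+1$ divides $q^2 - 1$, the root of unity $\zeta$ lies in $\FF_{q^2}^\times$, so $\tau$ has entries in $\FF_{q^2}$; and $\det \tau = 1 - \zeta^2 \neq 0$ because $\zeta$ has exact order $q + 1 \geq 3$, so $\zeta^2 \neq 1$. Hence $\tau \in \GL_2(\FF_{q^2})$ and descends to an element of $\PGL_2(\FF_{q^2})$. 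The same observation gives $\zeta^q = \zeta^{-1} \neq \zeta$, which I will use below.

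For the conjugation computation I would represent $f$ on $\PP^1$ by $[x:y] \mapsto [x^q : y^q]$, the map $\tau$ by the matrix $\left(\begin{smallmatrix} 1 & \zeta \\ \zeta & 1 \end{smallmatrix}\right)$, and $\tau^{-1}$ (up to scalar) by $\left(\begin{smallmatrix} 1 & -\zeta \\ -\zeta & 1 \end{smallmatrix}\right)$. Applying $\tau$ first sends $[x:y]$ to $[x + \zeta y : \zeta x + y]$; then $f$ sends this to $[(x+\zeta y)^q : (\zeta x + y)^q] = [x^q + \zeta^q y^q : \zeta^q x^q + y^q]$, using that raising to the $q$-th power is additive on $\Fpbar$ and acts on constants by the $q$-power Frobenius (so $(\zeta y)^q = \zeta^q y^q$). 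Applying $\tau^{-1}$ and then substituting $\zeta^{q+1} = 1$, the diagonal and cross terms cancel and one is left with $[(\zeta^q - \zeta) y^q : (\zeta^q - \zeta) x^q] = [y^q : x^q]$, where the last equality uses $\zeta^q \neq \zeta$. In the affine coordinate $z = x/y$ this is precisely $z \mapsto 1/z^q$, as desired.

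I do not expect a genuine obstacle here: the proof is a short finite computation, and the only point requiring care is the bookkeeping of the $q$-power map, which acts on coefficients by Frobenius rather than linearly, together with the simplification $\zeta^{q+1} = 1$. As a cross-check I would also note the equivalent affine route: realizing $\tau$ as the M\"obius transformation $z \mapsto (z + \zeta)/(\zeta z + 1)$ and verifying the identity $\tau(z)^q = \tau(1/z^q)$ by clearing denominators and again reducing $\zeta^q = \zeta^{-1}$. Either computation establishes $f^\tau(z) = 1/z^q$.
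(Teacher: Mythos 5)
Your proof is correct and takes essentially the same approach as the paper: the paper simply asserts that "checking the conjugation is a simple calculation" and justifies $\zeta_{q+1}\in\FF_{q^2}$ by the same cyclicity of $\FF_{q^2}^{\ast}$ of order $(q-1)(q+1)$, while you carry out that calculation explicitly (and correctly, including the needed facts $\zeta^{q+1}=1$ and $\zeta^q=\zeta^{-1}\neq\zeta$).
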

    \begin{proof}
        Checking the conjugation is a simple calculation, and $\zeta_{q+1}$ is in a quadratic extension of $\FF_q$ because $\FF_{q^2}^{\ast}$ is cyclic of order $q^2-1=(q-1)(q+1)$.
    \end{proof}
    It turns out that there are many elements of $\PGL_2(\FF_{p^2})$ that conjugate $z^q$ to $\frac{1}{z^q}$. This can be explained by the following result.
    \begin{prop}
        Let $f,g \in \Rat_d$ be in the same conjugacy class. Then the set $\Conj(f,g)$ of all conjugations from $f$ to $g$ is a right coset of $\Aut(f)$.
    \end{prop}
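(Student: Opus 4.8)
The plan is to identify $\Conj(f,g)$ with a transporter set for a group action and then invoke the standard coset description. The key structural observation is that the conjugation map $\alpha \mapsto f^\alpha$ is a \emph{right} action of $\PGL_2$ on $\Rat_d$: for all $\alpha,\beta\in\PGL_2$,
\[
(f^\alpha)^\beta = \beta^{-1}\bigl(\alpha^{-1} f \alpha\bigr)\beta = (\alpha\beta)^{-1} f (\alpha\beta) = f^{\alpha\beta}.
\]
Under this action, $\Aut(f)$ is exactly the stabilizer of $f$, while $\Conj(f,g)=\{\alpha\in\PGL_2 : f^\alpha = g\}$ is the set of elements carrying $f$ to $g$.

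Since $f$ and $g$ are assumed to lie in the same $\PGL_2$-orbit, $\Conj(f,g)$ is nonempty; fix any $\beta\in\Conj(f,g)$, so that $f^\beta = g$. I would then prove $\Conj(f,g)=\Aut(f)\,\beta$ by two inclusions. For $\subseteq$: if $\alpha\in\Conj(f,g)$ then $f^\alpha = g = f^\beta$, and applying $(\,\cdot\,)^{\beta^{-1}}$ together with the right-action identity gives $f^{\alpha\beta^{-1}} = (f^\alpha)^{\beta^{-1}} = (f^\beta)^{\beta^{-1}} = f^{\beta\beta^{-1}} = f$; hence $\alpha\beta^{-1}\in\Aut(f)$ and $\alpha\in\Aut(f)\,\beta$. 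For $\supseteq$: if $h\in\Aut(f)$ then $f^{h\beta} = (f^h)^\beta = f^\beta = g$, so $h\beta\in\Conj(f,g)$. Thus $\Conj(f,g)$ is the right coset $\Aut(f)\,\beta$, and since $\beta$ was an arbitrary element of $\Conj(f,g)$, this identification is independent of the choice. One may additionally note that conjugating by $\beta$ gives $\beta^{-1}\Aut(f)\beta=\Aut(g)$, so $\Conj(f,g)=\beta\,\Aut(g)$ is also a left coset of $\Aut(g)$; this explains symmetrically why there are ``many'' conjugations in examples like $z^q\mapsto 1/z^q$, namely $|\Conj(f,g)| = |\Aut(f)| = |\Aut(g)|$.

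I do not expect any genuine obstacle here: the content is entirely formal once the right-action property is in hand. The only point requiring care is the bookkeeping — tracking that, because the action is on the right, the transporter is the left-translate $\Aut(f)\,\beta$ of the stabilizer, i.e.\ a right coset in the paper's convention, rather than $\beta\,\Aut(f)$.
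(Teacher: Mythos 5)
Your proof is correct and follows essentially the same route as the paper: fix one conjugation $\tau\in\Conj(f,g)$ and verify the two inclusions $\Aut(f)\,\tau\subseteq\Conj(f,g)$ and $\Conj(f,g)\subseteq\Aut(f)\,\tau$ using the identity $(f^\alpha)^\beta=f^{\alpha\beta}$. Your explicit framing via the right action and stabilizer, and the added remark that $\Conj(f,g)=\tau\,\Aut(g)$, are pleasant but not substantively different from the paper's argument.
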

    \begin{proof}
        Let $\tau\in \Conj(f,g)$. We must show that $\Aut(f)\circ \tau=\Conj(f,g)$. For the first containment, let $\beta\in\Aut(f)$. Then $f^{\beta \circ \tau}=(f^{\beta})^{\tau}=f^{\tau}=g$, and so $\beta\circ \tau\in\Conj(f,g)$.

        For the reverse containment, it suffices to show that for all $\tau,\beta\in\Conj(f,g)$, we have $\tau\circ \beta^{-1}\in\Aut(f)$. This holds, since $f^{\tau\circ \beta^{-1}}=(f^\tau)^{\beta^{-1}}=g^{\beta^{-1}}=f$.
    \end{proof}

    \begin{proof}[Proof of Theorem \ref{thm_realizable_p_irregular} part \eqref{thm_realizable_semielementary}]
        Let $\Gamma$ be a $p$-semi-elementary subgroup of $\PGL_2(\FF_q)$. For the purposes of the realizability problem, we can replace $\Gamma$ by a conjugate. Then by the classification of Faber \cite{Faber}, as presented in Proposition \ref{prop_faber}, we can assume that $\Gamma$ has the following form:
        \begin{itemize}
            \item The group $\Gamma$ is a subgroup of the Borel group; that is, all its elements are of the form $z \mapsto az + b$.
            \item For any integer $n \geq 1$, let $\mu_n$ denote the multiplicative group of $n$-th roots of unity in $\Fpbar$. There is an additive group $\Lambda \subseteq \FF_q$ and an integer $n \geq 1$ such that
            $$\Gamma = \{ z \mapsto az + b : a \in \mu_n, b \in \Lambda \}.$$
            \item Multiplication by elements of $\mu_n$ maps $\Lambda$ into $\Lambda$.
        \end{itemize}

        Let
        \begin{equation*}
            f(z) = \prod_{\lambda \in \Lambda} (z - \lambda)^{n + 1} + z.
        \end{equation*}
        Then we claim $\Aut(f) = \Gamma$.
        Say $\tau \in \Gamma$. Then $\tau$ is given by $\tau(z) = az + b$ for some $a, b$ where $b \in \Lambda$ and $a \in \mu_n$. The following sequence of equalities is justified by re-indexing the product twice.
        \begin{align*}
        f(\tau(z)) &= \prod_{\lambda \in \Lambda} (az + b - \lambda)^{n+1} + az + b \\
        &= \prod_{\lambda \in \Lambda} (az - \lambda)^{n + 1} + az + b \\
        &= a^{n+1} \prod_{\lambda \in \Lambda} (z - \lambda/a)^{n+1} + az + b \\
        &= a \prod_{\lambda \in \Lambda} (z - \lambda/a)^{n + 1} + az + b \\
        &= a \prod_{\lambda \in \Lambda} (z - \lambda)^{n+1} + az + b \\
        &= \tau(f(z)).
        \end{align*}
    So, $\Gamma \subseteq \Aut (f)$. Now we prove the reverse containment. Suppose that $\tau \in \Aut(f)$. The fixed points of $f$ are $\Lambda \cup \{ \infty \}$. The multiplier at $\infty$ is 0 because $\infty$ is a critical point, and the multiplier at each point of $\Lambda$ is 1 (since $n \geq 1$). Then $\tau$ must fix $\infty$, since it is the only fixed point of $f$ with multiplier 0, so $\tau$ is of the form $z \mapsto a z + b$. Now, we must show that $b$ is in $\Lambda$ and that $a$ is in $\mu_n$. We consider the equality of polynomials given by $f(\tau(z)) = \tau(f(z))$:
    \begin{equation*}
        f(az + b) = af(z) + b.
    \end{equation*}
    The leading coefficient on the left side is $a^{n+1}$, and the leading coefficient on the right is $a$, so $a$ is an $n$-th root of unity. Expanding the equality of polynomials,
    \begin{equation*}
        a \prod_{\lambda \in \Lambda} (z + b/a - \lambda/a)^{n+1} + az + b = a \prod_{\lambda \in \Lambda} (z - \lambda)^{n+1} + az + b.
    \end{equation*}
    Simplifying, we have
    \begin{equation*}
        \prod_{\lambda \in \Lambda} (z + b/a - \lambda/a)^{n+1} = \prod_{\lambda \in \Lambda} (z - \lambda)^{n+1}.
    \end{equation*}
    Then $z \mapsto az + b$ must map $\Lambda$ to $\Lambda$ bijectively. The map $z \mapsto z/a$ is also bijective, so composing, we find that $z \mapsto z + b$ maps $\Lambda$ to $\Lambda$. Therefore, $b \in \Lambda$, completing the proof.
    \end{proof}

    \subsubsection{Realizing $\PSL_2(\FF_q)$}
    We now show how to realize $\PSL_2(\FF_q)$, where $q$ is a power of an odd prime $p$. We assume $p > 2$ to ensure that $\PSL_2(\FF_q) \neq \PGL_2(\FF_q)$.

    \begin{proof}[Proof of Theorem \ref{thm_realizable_p_irregular} part \eqref{thm_realizable_psl}] We begin with the fundamental invariants of $\PSLq$,
        \begin{align*}
            u &= x^qy - xy^q,\\
            c_1 &= \sum_{n = 0}^{q} x^{(q-1)(q-n)}y^{(q-1)n},
        \end{align*}
        which have degree $q+1$ and $q^2-q$ respectively (see, for instance, \cite{Benson}).

        The Doyle-McMullen construction \cite{DM} takes two invariant homogeneous polynomials $F$ and $G$ of some $\Gamma \subseteq \PGL_2$ and outputs a map with $\Gamma \subseteq \Aut(f)$. We generalize this construction to characteristic $p>0$ in Theorem \ref{thm_dmcharp}. For invariants $F$ and $G$, the corresponding map on projective space is
        $f = [xF + G_y : yF - G_x]$, where $G_y$ and $G_x$ are the partial derivatives.

        Applying this construction to $G=u^a$ and $F=c_1^b$, with $a$ and $b$ as given in the statement of the theorem and using that $a \equiv 1 \mod p$, we obtain the map
        \begin{align*}
            f(x,y)  = \Bigg[& x \left(\sum_{n = 0}^{q} x^{(q-1)(q-n)}y^{(q-1)n}\right)^b + (x^qy - xy^q)^{a-1}x^q : \\
            & y \left(\sum_{n = 0}^{q} x^{(q-1)(q-n)}y^{(q-1)n}\right)^b + (x^qy - xy^q)^{a-1}y^q\Bigg].
        \end{align*}

        Next, we calculate the fixed points of $f$:
        \begin{align*}
            f(x,y) &= [x : y]\\
            \iff \quad
            &y\left(x\left(\sum_{n = 0}^{q} x^{(q-1)(q-n)}y^{(q-1)n}\right)^b + (x^qy - xy^q)^{a-1}x^q\right)\\
            &=x\left(y\left(\sum_{n = 0}^{q} x^{(q-1)(q-n)}y^{(q-1)n}\right)^b + (x^qy - xy^q)^{a-1}y^q\right)\\
            \iff \quad  &(x^qy - xy^q)^{a-1}x^q y = (x^qy - xy^q)^{a-1}x y^q\\
            \iff \quad &(x^qy - xy^q)^{a-1}(x^qy - xy^q) = (x^qy - xy^q)^{a} = 0.
        \end{align*}

        Setting $y = 1$, we see that the fixed points are the roots of $(x^q - x)^a$, or the elements of $\FF_q$, each with multiplicity $a$. Likewise, $y = 0$ is a solution, so infinity is a fixed point with multiplicity $a$.

        We know that $\PSLq \subseteq \Aut(f)$ by construction. It remains to show equality. Using the assumption $p > 2$, let $\alpha$ be any non-square element of $\FF_q$. Then $\left(\begin{smallmatrix}\alpha & 0 \\0 & 1 \end{smallmatrix}\right)$ corresponds to the map $\tau(x,y) = [\alpha x : y]$. We claim that $\tau \not\in \Aut(f).$ Indeed, we compute
        \begin{align*}
            f^{\tau} = \Bigg[&\frac{1}{\alpha}\left( \alpha x \left(\sum_{n = 0}^{q} (\alpha x)^{(q-1)(q-n)}y^{(q-1)n}\right)^b + ((\alpha x)^qy - (\alpha x)y^q)^{a-1}((\alpha x)^q)\right) : \\
            & y\left(\sum_{n = 0}^{q} (\alpha x)^{(q-1)(q-n)}y^{(q-1)n})\right)^b + ((\alpha x)^qy - (\alpha x)y^q)^{a-1}y^q \Bigg]\\
            &= \Bigg[ x \left(\sum_{n = 0}^{q}x^{(q-1)(q-n)}y^{(q-1)n}\right)^b + \alpha^{a-1}(x^qy - xy^q)^{a-1}x^q :\\
            & y \left(\sum_{n = 0}^{q}x^{(q-1)(q-n)}y^{(q-1)n}\right)^b + \alpha^{a-1}(x^qy - xy^q)^{a-1}y^q \Bigg].
        \end{align*}
        Thus we see that $f^{\tau} = f \iff \alpha^{a-1} = 1$.
        Since $a = \frac{q(q-3) + 4}{2}$,
        \begin{align*}
            a-1 &= \frac{q(q-3) + 2}{2} = \frac{(q-1)(q-2)}{2}\\
                &\implies \alpha^{a-1} = (\alpha^{\frac{q-1}{2}})^{q-2} = (-1)^{q-2} = -1.
        \end{align*}
        Therefore, $f^{\tau} \neq f$.

        Since $f$ has $q + 1$ fixed points, we have $\left|\Aut(f)\right| \leq (q+1)q(q-1)$. Further, since $z \mapsto \alpha z$ induces a self-map of $\Fix(f)$ but is not an automorphism of $f$, the inequality is strict. Since $\Aut(f)$ contains $\PSL_2(\FF_q)$, we have
        \begin{equation} \label{eq_psl_order_estimate}
            \frac{(q+1)q(q-1)}{2} \leq \left| \Aut(f) \right| < (q+1)q(q-1).
        \end{equation}
        Since $\Aut(f)$ contains $\PSL_2(\FF_q)$, it is a $p$-irregular group that does not stabilize any subset of $\PP^1(\bar{\FF}_q)$ of cardinality 1 or 2. By the classification of finite subgroups (see the Appendix), if $p \neq 3$, the only such groups are isomorphic to $\PGL_2(\FF_{q'})$ or $\PSL_2(\FF_{q'})$ for some power $q' \geq q$ of $p$, and among these, the only isomorphism type satisfying \eqref{eq_psl_order_estimate} is $\PSL_2(\FF_{q})$. If $p = 3$, the same argument applies, but we must also rule out the isomorphism type $A_5$. But the order of $A_5$ is 60, which does not satisfy \eqref{eq_psl_order_estimate} for any power $q$ of 3.

        The final claims to verify are that
        \begin{equation} \label{eq_deg_psl_example}
            \deg f = \frac{1}{2}(q^3 - 2q^2 + q + 2),
        \end{equation}
        and that this $f$ is of minimal degree for $\PSL_2(\FF_q)$. In Theorem \ref{thm_pslminimal}, we show that any map $f$ with $\deg f > 1$ and $\Aut(f) \cong \PSL_2(\FF_q)$ has degree at least $\frac{1}{2}(q^3 - 2q^2 + q + 2).$ The formula for $f$ shows that $\deg f \leq \frac{1}{2}(q^3 - 2q^2 + q + 2)$. The set of fixed points has cardinality $q + 1$, so $\deg f > 1$, proving \eqref{eq_deg_psl_example}.
    \end{proof}

    Theorem \ref{thm_realizable_p_irregular} part \eqref{thm_realizable_psl} is rather cumbersome and it is difficult to understand the maps arising from the invariants. In the case where $q=p$, we have the following simplified version.
    \begin{thm}
        Let $p>2$ be prime, let $m = \frac{1}{2}p^2-\frac{3}{2}p+2$, and let $c \neq 0$, and let
        \begin{equation*}
            \psi(z)=\frac{cz^m}{(z^{p-1}+1)^{\frac{p-1}{2}}+cz^{m-1}}.
        \end{equation*}
        Then the automorphism group of $f(z)=\psi(z^p-z)+z$ is exactly $\PSLp$.
    \end{thm}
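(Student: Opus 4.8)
The plan is to recognize $f(z) = \psi(z^{p}-z)+z$ as the affine form of a map produced by the Doyle--McMullen construction of Theorem~\ref{thm_realizable_p_irregular}\eqref{thm_realizable_6} with $q=p$, and then to invoke the argument of that part. Write $w = z^{p}-z$ throughout, and observe that the exponent $m$ in $\psi$ is exactly the integer $a = \tfrac{1}{2}\bigl(p(p-3)+4\bigr)$ appearing there, while $\tfrac{p-1}{2}=b$. First I would clear denominators,
\[
f(z)=\frac{c\,w^{m}+z\bigl((w^{p-1}+1)^{(p-1)/2}+c\,w^{m-1}\bigr)}{(w^{p-1}+1)^{(p-1)/2}+c\,w^{m-1}},
\]
and use the identity $c\,w^{m}+c\,z\,w^{m-1}=c\,w^{m-1}(w+z)=c\,w^{m-1}z^{p}$ to rewrite the numerator as $c\,z^{p}w^{m-1}+z\,(w^{p-1}+1)^{(p-1)/2}$.

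Next I would match this with the Doyle--McMullen output of \eqref{eq_DM}. For the fundamental invariants $u = x^{p}y-xy^{p}$ and $c_1$ of $\PSLp$ one has $u(z,1)=w$, $\partial u/\partial y = x^{p}$, $\partial u/\partial x = -y^{p}$, and the identity $c_1(z,1)=w^{p-1}+1$; this last follows from $c_1 = (x^{p^{2}}y-xy^{p^{2}})/u$ together with the Frobenius identity $z^{p^{2}}-z=(z^{p}-z)^{p}+(z^{p}-z)$. Since $a$ is a unit modulo $p$ (in fact $a\equiv 2$), applying \eqref{eq_DM} to $F=\mu\,u^{a}$ and $G=c_1^{b}$, for a suitable scalar $\mu\neq 0$, produces a map which in the coordinate $z$, after subtracting off the identity, equals $\psi(w)$; so $f(z)=\psi(z^{p}-z)+z$ is exactly this Doyle--McMullen map. (Letting $c$ range over $\Fpbar^{\times}$ merely replaces $u^{a}$ by a nonzero scalar multiple of itself, which changes nothing in what follows.) The polynomials $F$ and $G$ are $\PSLp$-invariant, and $f$ has the three distinct fixed points $0$, $1$, $\infty$, so $\deg f\geq 2$; hence Theorem~\ref{thm_dmcharp} gives $\PSLp\subseteq\Aut(f)$.

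For exactness I would appeal to the classification of finite subgroups. A finite subgroup of $\PGL_2(\Fpbar)$ that properly contains $\PSLp$ contains $z\mapsto z+1$, so it is $p$-irregular; and since $p>2$ it is not $p$-semi-elementary, because a $p$-semi-elementary group has a unique Sylow $p$-subgroup while $\PSLp$ does not. By Proposition~\ref{prop_faber} such a group is therefore $\PGL_2(\FF_{p^{k}})$ for some $k\geq 1$ or $\PSL_2(\FF_{p^{k}})$ for some $k\geq 2$. The fixed points of $f$ are the solutions of $\psi(w)=0$, that is, of $c\,w^{m}=0$, so they are the elements of $\FF_p$ together with $\infty$; thus $f$ has exactly $p+1$ fixed points and $|\Aut(f)|\leq (p+1)p(p-1)=|\PGLp|$. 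As in the proof of Theorem~\ref{thm_realizable_p_irregular}\eqref{thm_realizable_6}, this bound excludes $\PGL_2(\FF_{p^{k}})$ and $\PSL_2(\FF_{p^{k}})$ for all $k\geq 2$, leaving only $\PGLp$. To rule that out, take a non-square $\alpha\in\FF_p^{\times}$ and let $\tau(z)=\alpha z$; a computation parallel to the one in that proof shows $f^{\tau}=f$ if and only if $\alpha^{m-1}=1$, whereas $m-1=\tfrac{1}{2}(p-1)(p-2)$ gives $\alpha^{m-1}=(\alpha^{(p-1)/2})^{p-2}=(-1)^{p-2}=-1\neq 1$. Hence $\Aut(f)=\PSLp$.

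The step I expect to require the most care is the identification in the second paragraph: checking that \eqref{eq_DM} applied to $\mu\,u^{a}$ and $c_1^{b}$ really dehomogenizes to $\psi(z^{p}-z)+z$, which amounts to pinning down $\mu$ and correctly tracking the scalar $a\bmod p$ and the factor $\tfrac{1}{2}$ in \eqref{eq_DM}. Everything after that identification is a direct transcription of the proof of Theorem~\ref{thm_realizable_p_irregular}\eqref{thm_realizable_6}.
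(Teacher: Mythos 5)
Your proof is correct, but it reaches the containment $\PSLp\subseteq\Aut(f)$ by a genuinely different route than the paper. The paper verifies directly that the three standard generators of $\PSLp$ --- the translation $z\mapsto z+1$, scaling by a quadratic residue, and the inversion $z\mapsto -1/z$ --- commute with $f$, the inversion case resting on the identities $y=zx+z^2$ and $x^{p-1}+y^{p-1}=z^{p-1}(x^{p-1}+1)$ for $x=z^p-z$, $y=z^{p+1}$. You instead recognize $f$ as the dehomogenization of the Doyle--McMullen map attached to $\mu u^{a}$ and $c_1^{b}$ and invoke Theorem~\ref{thm_dmcharp}; your bookkeeping is right ($c_1(z,1)=w^{p-1}+1$ by Frobenius, the numerator regroups via $w+z=z^p$, and $\mu=c/2$ since $a\equiv 2\bmod p$ --- note the paper's proof of Theorem~\ref{thm_realizable_p_irregular}\eqref{thm_realizable_6} misstates this as $a\equiv 1$). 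Your route explains where $\psi$ comes from and gets the inversion generator for free from the invariance of $u$ and $c_1$, at the cost of relying on the machinery of Section~\ref{sect_dm}; the paper's route is elementary and self-contained. For exactness the two proofs share the decisive computation (a non-residue scaling fails to be an automorphism because $\alpha^{m-1}=(\alpha^{(p-1)/2})^{p-2}=-1$), but your additional step --- counting the $p+1$ fixed points to bound $|\Aut(f)|\le(p+1)p(p-1)$ and thereby exclude $\PGL_2(\FF_{p^k})$ and $\PSL_2(\FF_{p^k})$ for $k\ge 2$ --- is a genuine improvement: the non-residue test alone does not rule out $\PSL_2(\FF_{p^k})$ for odd $k\ge 3$ (there $\alpha$ remains a non-square, so the scaling map does not lie in that group and its failure says nothing), and the paper's proof of this theorem leaves that case implicit.
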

    \begin{proof}
        We check that the generators of $\PSLp$, which are
        $\begin{pmatrix}
            1 & 1 \\
            0 & 1
        \end{pmatrix}$,
        $\begin{pmatrix}
            0 & 1 \\
            -1 & 0
        \end{pmatrix}$,
        $\begin{pmatrix}
            \alpha & 0 \\
            0 & 1
        \end{pmatrix}$, where $\alpha$ is a quadratic residue in $\FF_p$, are all automorphisms of $f$. We also need to show that
        $\begin{pmatrix}
            \alpha & 0 \\
            0 & 1
        \end{pmatrix}$ is not an automorphism of $f$ when $\alpha$ is a non-residue.

        For
        $\begin{pmatrix}
            1 & 1 \\
            0 & 1
        \end{pmatrix}$, we compute
        \begin{equation*}
            f(z+1) - 1 = \psi((z+1)^p - (z+1)) + (z+1) -1 = \psi(z^p-z) + z = f.
        \end{equation*}

        We next check maps of the form
        $\begin{pmatrix}
            \alpha & 0 \\
            0 & 1
        \end{pmatrix}$. This is an automorphism if and only if $f(\alpha z) = \alpha f(z)$. This holds if and only if
        \begin{equation}\label{eq1}
            0 = f(\alpha z) - \alpha f(z) = \alpha\psi(z^p - z) - \psi(\alpha(z^p - z)).
        \end{equation}
        Making the substitution $w = z^p - z$, we see equation \eqref{eq1} holds if and only if
        \begin{align*}
            0 &= \alpha\psi(w) - \psi(\alpha w) =
            \frac{\alpha c w^m}{(w^{p-1} + 1)^\frac{p-1}{2} + cw^{m-1}} -
            \frac{c\alpha^m  w^m}{(\alpha^{p-1}w^{p-1} + 1)^\frac{p-1}{2} + c\alpha^{m-1}w^{m-1}}\\
            &= (c\alpha w^m)\left(\frac{1}{(w^{p-1} + 1)^\frac{p-1}{2} + cw^{m-1}} -
            \frac{\alpha^{m-1}}{(\alpha^{p-1}w^{p-1} + 1)^\frac{p-1}{2} + c\alpha^{m-1}w^{m-1}}\right).
        \end{align*}
        Keeping in mind that $\alpha^{p-1} = 1$, this is equivalent to
        \begin{align*}
            0 &= (\alpha^{p-1}w^{p-1} + 1)^\frac{p-1}{2} + c\alpha^{m-1}w^{m-1} -
            \alpha^{m-1}(w^{p-1} + 1)^\frac{p-1}{2} - \alpha^{m-1}cw^{m-1}\\
            &= (1 - \alpha^{m-1})(w^{p-1} + 1)^{\frac{p-1}{2}}.
        \end{align*}
        Thus,
        $\begin{pmatrix}
            \alpha & 0 \\
            0 & 1
        \end{pmatrix}$
        is an automorphism of $f$ if and only if $\alpha^{m-1} = 1$. We have
        \begin{equation*}
            \alpha^{m-1} = \alpha^\frac{(p-1)(p-2)}{2}
        \end{equation*}
        and the order of $\alpha$ is $\FF_p$ must be a divisor of $p-1$. So $\alpha^{m-1} = 1$ if and only if $\alpha^\frac{p-1}{2} = 1$, which, by Euler's criterion, is equivalent to $\alpha$ being a quadratic residue.

        It remains to check that
        $\begin{pmatrix}
            0 & 1 \\
            -1 & 0
        \end{pmatrix}$
        is an automorphism. To simplify the computations, we introduce the variables $x = z^p - z$, $y = z^{p+1}$.
        We need $\frac{-1}{f(z)} - f(\frac{-1}{z}) = 0$.
        We have
        \begin{equation*}
            \frac{-1}{f(z)} - f\left(\frac{-1}{z}\right) =
            \frac{-1}{\psi(z^p-z) + z} - \psi\left(-\frac{1}{z^p} + \frac{1}{z}\right) + \frac{1}{z} = \frac{-1}{\psi(x) + z} - \psi\left(\frac{x}{y}\right) + \frac{1}{z},
        \end{equation*}
        which vanishes if and only if
        \begin{equation*}
           z(\psi(x) + z)\psi\left(\frac{x}{y}\right) - \psi(x)=0.
        \end{equation*}
        Now
        \begin{align*}
            z(\psi(x) &+ z)\psi\left(\frac{x}{y}\right) - \psi(x)\\
            &=z\left(
            \frac{cx^m}{(x^{p-1} + 1)^\frac{p-1}{2} + cx^{m-1}} + z
            \right)\left(
            \frac{c(\frac{x}{y})^m}{((\frac{x}{y})^{p-1} + 1)^\frac{p-1}{2} + c(\frac{x}{y})^{m-1}}
            \right) - \frac{cx^m}{(x^{p-1} + 1)^\frac{p-1}{2} + cx^{m-1}}\\
            &=z\left(
            \frac{cx^m}{(x^{p-1} + 1)^\frac{p-1}{2} + cx^{m-1}} + z
            \right)\left(
            \frac{cx^m}{y^m((\frac{x}{y})^{p-1} + 1)^\frac{p-1}{2} + cyx^{m-1}}
            \right) - \frac{cx^m}{(x^{p-1} + 1)^\frac{p-1}{2} + cx^{m-1}},
        \end{align*}
        which vanishes precisely with
        \begin{align}
            cx^m&\left[cx^mz + z^2\left((x^{p-1} + 1)^\frac{p-1}{2} + cx^{m-1}
            \right) - y^m\left(\left(\frac{x}{y}\right)^{p-1} + 1\right)^\frac{p-1}{2} - cyx^{m-1}\right] \notag\\
            &= cx^m\left[cx^{m-1}(zx+z^2-y) + z^2\left((x^{p-1} + 1)^\frac{p-1}{2}\right) - y^\frac{-p + 3}{2}(x^{p-1} + y^{p-1})^\frac{p-1}{2}\right]. \label{eq2}
        \end{align}
        Now we use the following two identities:
        \begin{align*}
            y &= zx + z^2\\
            x^{p-1} + y^{p-1} &= z^{p-1}(x^{p-1} + 1).
        \end{align*}
        The first identity is trivial, and the second follows from the expansion
        \begin{equation*}
            x^{p-1} \equiv z^{p(p-1)} + z^{(p-1)(p-1)} + z^{(p-2)(p-1)} + z^{(p-3)(p-1)} + \cdots + z^{p-1} \pmod{p},
        \end{equation*}
        using that $\binom{p-1}{k} \equiv (-1)^k \pmod{p}$ for $1 \leq k \leq p-1$.
        With these identities, \eqref{eq2} becomes
        \begin{align*}
            cx^m\Big[&z^2\left((x^{p-1} + 1)^\frac{p-1}{2}\right) - y^{\frac{-p + 3}{2}}z^{\frac{(p-1)^2}{2}}(x^{p-1} + 1)^{\frac{p-1}{2}}\Big]\\
            &=cx^m(x^{p-1} + 1)^{\frac{p-1}{2}}\left[z^2 - z^{\frac{(p+1)(-p + 3)}{2} + \frac{(p-1)^2}{2}}\right]\\
            &=cx^m(x^{p-1} + 1)^{\frac{p-1}{2}}\left[z^2 - z^2\right]=0.
        \end{align*}
        Thus,
        $\begin{pmatrix}
            0 & 1 \\
            -1 & 0
        \end{pmatrix}$
        is indeed an automorphism.

        We have so far shown
        $$\PSL_2(\FF_p) \subseteq \Aut(f) \subsetneq \PGL_2(\FF_q).$$
        To show that $\Aut(f)$ is not equal to any group strictly containing $\PSL_2(\FF_p)$, we argue as in the proof of Theorem \ref{thm_realizable_p_irregular} part \ref{thm_realizable_psl}. The map $f$ has $p + 1$ fixed points, as can be seen from the equation
        $$f(z) = \psi(z^p - z) + z = z.$$
        There are $(p+1)p(p-1)$ elements of $\PGL_2(\bar{\FF}_p)$ for which $\Fix(f)$ is an invariant set, and we showed that at least one of these elements is not an automorphism of $f$. The argument about group orders in the proof of Theorem \ref{thm_realizable_p_irregular} part \ref{thm_realizable_psl} then shows that $\Aut(f) = \PSL_2(\FF_p)$.
\end{proof}

From the earlier discussion of maps of the form $\psi(z^p - z) + z$, we can easily determine the multiplier spectrum of the above map, which shows that varying $c$ results in a $1$-dimensional family of maps realizing $\PSLp$ in the moduli space.

\subsubsection{$p$-irregular dihedral groups}
We now prove that any $p$-irregular dihedral group $D_{2n}$ is realizable. These groups occur only when $p = 2$ and $n \geq 3$ is odd.
\begin{proof}[Proof of Theorem \ref{thm_realizable_p_irregular} part \ref{thm_realizable_irreg_dih_2}]
Let
$$ f(z) = \frac{1}{z^{2n-1}}.$$
We claim that $\Aut(f) \cong D_{2n}$. By direct computation, the automorphism group of $f$ includes $z \mapsto 1/z$ and $z \mapsto \zeta_n z$, and these transformations generate a dihedral group of order $2n$. We now show that there are no extra automorphisms by showing that $f(z)$ has at most $2n$ automorphisms. First, notice that
$$\Fix(f) = \{z \in \bar{\FF}_2 : z^{2n} = 1\},$$
which has cardinality $n$. Second, we claim that the set of points with a unique preimage is $\{0, \infty\}$. It is clear that $f^{-1}(0) = \infty$ and $f^{-1}(\infty) = 0$. Now, let $c \neq 0, \infty.$ The affine preimages of $c$ are the values of $z$ such that $f(z) = c$, or equivalently
$$c z^{2n-1} = 1.$$
Since $c \neq 0$, this polynomial is of degree $2n - 1$, and is separable because $2n - 1$ is odd, so the roots are distinct. Since $n > 1$, there are multiple roots.
Each automorphism of $f$ is determined by where it sends $0$, $1$, and $\infty$. There are at most 2 possible images for $0$, and $n$ possible images for $1$, so there are at most $2n$ automorphisms of $f$.
\end{proof}

\subsubsection{The icosahedral subgroup of $\PGL_2(\bar{\FF}_3)$.} \label{sect_irreg_a5}
The final construction needed for Theorem \ref{thm_realizable_p_irregular} is the case of the icosahedral subgroup $A_5$ in $\PGL_2(\bar{\FF}_3)$. This is a finite calculation that is part of the dynamical systems library in Sage \cite{sage}.
\begin{proof}[Proof of Theorem \ref{thm_realizable_p_irregular} part (\ref{thm_realizable_irreg_a5})]
Let $f$ be the dynamical system in the theorem statement. Using the algorithm in Sage to compute automorphism groups of dynamical systems, we computed that $\Aut(f) \cong A_5$ \cite{sage}. We also compute in Sage that the resultant of $f$ is nonzero, so $\deg f = 21.$ We prove the claim that $f$ is of minimal degree for $A_5$ in Theorem \ref{thm_a5_minimal}.
\end{proof}

\subsection{Realizability of $p$-regular finite subgroups of $\PGLpbar$} \label{sect_pregular}

In this section, we construct solutions to the realizability problem for every $p$-regular finite subgroup $\Gamma$ of $\PGLpbar$. Each group that appears is trivial, cyclic, dihedral, tetrahedral, octahedral, or icosahedral, and each isomorphism type appears as a single conjugacy class; see the Appendix for the classification.

\subsection{The trivial group}
We now prove Theorem \ref{thm_realizable_p_regular} part (\ref{p_regular_trivial}), which says that the trivial group $1$ is realizable.
\begin{proof}[Proof of Theorem \ref{thm_realizable_p_regular} part (\ref{p_regular_trivial})]
    Let $f(z) = z^2 + z$. By direct calculation, we have $\Fix(f) = \{0,\infty\}$. The multiplier at $1$ is $1$, and the multiplier at $\infty$ is $0$. Thus any automorphism of $f$ must fix $0$ and $\infty$, so the only possible automorphisms are of the form $z \mapsto \alpha z$, where $\alpha \neq 0$. Given such an automorphism, $f(\alpha z) = \alpha f(z)$ implies $\alpha^2 = \alpha$, so $\alpha = 1$. Thus $\Aut(f) = 1.$
\end{proof}
\begin{rmk}
    This is a special case of the argument used to realize $p$-semi-elementary groups in Theorem \ref{thm_realizable_p_irregular} part (\ref{thm_realizable_semielementary}), taking the multiplicative group $\mu_n = 1$ and the additive group $\Lambda = 0$. In Section \ref{sect_Ad}, we further show that a generic degree-2 map $f$ has no nontrivial automorphisms.
\end{rmk}

\subsection{Cyclic groups}
    In this section, we prove Theorem \ref{thm_realizable_p_regular} part (\ref{p_regular_cyclic}), that every $p$-regular cyclic group $C_{n}$ arises as the exact automorphism group of a self-map of $\PP^1(\Fpbar)$. The $p$-regularity condition means that $n$ is coprime with $p$.

    Silverman \cite{Silverman12} shows that, in characteristic $0$, a map $f$ has $C_n \subseteq \Aut(f)$ if and only if $f$ is of the form $f(z) =z \psi(z^n)$ for some rational function $\psi$. The argument is valid as long as primitive $n$-th roots of unity exist, which is true in characteristic $p$ when $\gcd(p,n)=1$.

    \begin{proof}[Proof of Theorem \ref{thm_realizable_p_regular} part \ref{p_regular_cyclic}]
         Let $n$ be coprime with $p$, and let $f(z)=\frac{1}{z^{n-1}}+z$.

        First notice that the map $z\mapsto \zeta_n z$ is an order $n$ automorphism. For the other containment, notice that $\infty$ is the unique fixed point of $f$. The unique non-fixed preimage of $\infty$ is $0$. Any automorphism of $f$ therefore must fix $\infty$ and $0$, and so is of the form $\alpha(z) = az$ for some constant $a$. We compute $f^{\alpha} = \frac{1}{a^nz^{n-1}} + z$, so to get an automorphism, we must have that $a$ is a primitive $n$-th root of unity.

        It remains to show that no map of smaller degree has $C_n$ as its automorphism group. By Silverman \cite{Silverman12}, if a map $f$ has an order $n$ automorphism with $n$ co-prime to $p$, it must be of the form $z\psi(z^n)$ for some rational map $\psi$. If $\psi$ is a constant map, then $f$ is degree 1; otherwise, the minimal degree possible is $n-1$ when $\psi(z) = \frac{a}{z}$ with $a\neq 0$. In this case $f(z)=\frac{a}{z^{n-1}}$ has the extra automorphism $z\mapsto\frac{1}{z}$. Thus, there are no maps of degree $n-1$ with $C_n$ as their exact automorphism group, and $n$ is the minimal degree.
    \end{proof}

    \begin{rmk}
         Let $p$ be a prime and let $n \geq 2$ be coprime to $p$. Then the map $f(z)=z^{n+1}+z$ also has $\Aut(f)\cong C_n$. This $f(z)$ appears when applying the construction used to prove Theorem \ref{thm_realizable_p_irregular} part  (\ref{thm_realizable_semielementary}) to the multiplicative group $\mu_n$ of $n$-th roots of unity and the additive group $\Lambda = 0.$ However, $f(z)$ is not of minimal degree for $C_n$.
    \end{rmk}

\subsection{Dihedral groups}
    In this section, we prove Theorem \ref{thm_realizable_p_regular} part \eqref{p_regular_dihedral} that every $p$-regular dihedral group $D_{2n}$ arises as the exact automorphism group of a self-map of $\PP^1(\Fpbar)$. The $p$-regularity condition here means that $p > 2$ and that $n$ is coprime to $p$.

    Silverman described maps with automorphism group containing a dihedral group $D_{2n}$; see \cite{Silverman12} or \cite[Exercise 4.37]{Silverman10}. In characteristic 0, these are exactly the maps of the form
    \begin{equation*}
        f(z) = z \cdot \frac{F(z^n)}{z^{dn} F(z^{-n})},
    \end{equation*}
    where $F$ is any polynomial and $d$ is its degree. Using the form above, one can write down various families of maps with at least dihedral symmetry and then check against extra automorphisms. For instance, in characteristic 0, the realizability problem for $D_{2n}$ can be solved by $z^{n + 1}$, which corresponds to the choice $F(z) = z$. But in characteristic $p$, this $f$ sometimes acquires extra automorphisms. The task for us is to find families of solutions that each work for most choices of $p$ and $n$, so that taken together, all choices of $p$ and $n$ are accounted for.

\begin{proof}[Proof of Theorem \ref{thm_realizable_p_regular} part \eqref{p_regular_dihedral}]
    In each case to be addressed, the maps $\alpha(z)=1/z$ and $\beta(z) = \zeta_n z$ are automorphisms of $f$ that generate a dihedral group $D_{2n}$. To prove exactness, we argue that in each case, $f$ has at most $2n$ automorphisms.

    \begin{enumerate}
        \item We assume $n \not\equiv -1 \pmod{p}$ and $f(z) = z^{n+1}$. A simple calculation shows that $\alpha, \beta$ are automorphisms so that $D_{2n} \subseteq \Aut(f)$. Any automorphism must permute sets of fixed points of the same multiplier. Examining the equation $f(z) = z$, we calculate that the fixed points are $0$, $\infty$, and all the $n$-th roots of unity. Of these, the fixed points $0$ and $\infty$ have multiplier $0$, and the $n$-th roots of unity have multiplier $n + 1$, which is nonzero by the hypothesis on $n$. We conclude that every automorphism permutes $\{0, \infty\}$ and permutes $\{ {\zeta_n}^k: k = 0, 1, ..., n - 1 \}$.

        An automorphism can be completely described by specifying the images of three points. So we may bound the number of automorphisms by considering the possible images of 0, $\infty$, and 1. There are at most $n$ choices for where to send 1. There are at most 2 choices for where to send 0, and that choice also determines the image of $\infty$. So there are at most $2n$ automorphisms of $f$.

        \item We assume $n \not\equiv 1 \pmod{p}$, $n > 2$, and $f(z) = \frac{1}{z^{n-1}}$. A simple calculation checks that $\alpha, \beta$ are automorphisms, so $D_{2n} \subseteq \Aut(f)$. We  again prove that $\{{\zeta_n}^k : k = 0, ..., n - 1 \}$ and $\{0, \infty\}$ are invariant sets for every automorphism; then the argument in the first case proves the bound. The first set is $\Fix(f)$, so it is invariant. To prove invariance of $\{0, \infty\}$, we show that it is the set of all points with a unique preimage. A direct check shows that this set contains 0 and $\infty$, so we need only check that no other points are in the set. Suppose $c\not\in \{ 0,\infty \}$. Then the preimages of $c$ are the roots of $z^{n-1}-\frac{1}{c}$. Since $n \not\equiv 1 \mod p$ by hypothesis, this polynomial is separable, so it has distinct roots. Then, using the hypothesis $n > 2$, distinct roots implies at least two roots, so $c$ does not have a unique preimage.

        Since $D_{2n}$ has an element of order $n$, by Proposition \ref{prop_degtest}, the lowest degree map that could realize it is $n-1$, which is what we have.

        \item We assume $n=2$ and $f(z) = z \cdot \frac{z^2 + a}{a z^2 + 1}$ for $a$ not in the exceptional set $\{-3, -1, 0, 1 \}$. First we observe that $f$ has automorphisms $\alpha(z) = 1/z$ and $\beta(z)=-z$, which generate a dihedral group $D_4 \cong C_2 \times C_2$ since $p > 2$. The conditions on $a$ ensure that $f$ is degree 3. Then we just need to show that $f$ has at most 4 automorphisms. We can do this by finding two invariant sets of cardinality 2. We calculate the fixed points of $f$ and sort them by multiplier. The fixed points are 0, $\infty$, 1, and $-1$. The first two of these have multiplier $a$, and the last two have multiplier $(3 - a)/(a + 1)$. The conditions on $a$ guarantee that these two multipliers are distinct. Finally, this map $f(z)$ is of minimal degree for $D_4$, since by Theorem \ref{thm_Ad}, there are no degree 2 maps with automorphism group $D_4$.
    \end{enumerate}
    These cases account for all valid choices of $p$ and $n$.
\end{proof}

\subsection{Platonic solid groups}

    The problem of finding maps with platonic solid symmetry in characteristic 0 has been studied in detail by Klein \cite{klein_icosa}, Doyle and McMullen \cite{DM}, and De Faria and Hutz \cite{Hutz15}. Using their examples, we experimentally found that various self-maps of $\PP^1(\bar{\Q})$ with platonic solid symmetries could usually be reduced modulo $p$ to produce maps of $\PP^1(\bar{\FF}_p)$ without picking up extra automorphisms. We turn this observation into a proof of the remainder of Theorem \ref{thm_realizable_p_regular} by carrying out the following strategy.

\begin{enumerate}
    \item Exhibit a faithful representation of $\Gamma$ in $\PGL_2(\bar{\Q})$ where each entry of each matrix in $\Gamma$ is an algebraic integer. Then by reducing the entries of each matrix modulo $p$, we get entries in $\bar{\FF}_p$, and, in fact, we get a new representation of $\Gamma$ in $\PGL_2(\bar{\FF}_p)$. We denote the image of $\Gamma$ by $\Gamma_p$. We seek representations of $\Gamma$ such that the resulting representation in $\PGL_2(\bar{\FF}_p)$ is faithful for almost all $p$.
    \item Choose a map $f$ over $\bar{\Q}$ that has exact automorphism group $\Gamma$, and reduce it modulo $p$ to obtain a map $f_p$. The automorphism group of $f_p$ certainly contains $\Gamma_p$ but may have picked up additional elements as well.
    \item Show that for most primes, the reduced map $f_p$ has degree at least 2 and no automorphisms besides those in $\Gamma_p$.
    \item For any primes that have not been accounted for yet, make another choice of $f$ and repeat the process.
\end{enumerate}
As it turns out, most choices of $f$ seem to work for most primes $p$, so this strategy does not take long to terminate. The third step above is the most interesting, and our methods differ somewhat for the three platonic solid groups.

\begin{proof}[Proof of Theorem \ref{thm_realizable_p_regular} part \eqref{p_regular_octahedral}]
    The octahedral group $S_4$ has $24 = 2^3 \cdot 3$ elements, and we study only $p$-regular groups in this section, so this case only concerns primes $p > 3$.

    The octahedral group has a representation over $\bar{\Q}$ given by
    \begin{equation*}
        \Gamma = \left\langle
        S = \begin{bmatrix}
        i & i \\
        1 & -1 \\
        \end{bmatrix},
        T = \begin{bmatrix}
        i & 0 \\
        0 & 1 \\
        \end{bmatrix},
        U = \begin{bmatrix}
        0 & 1 \\
        1 & 0
        \end{bmatrix}\right\rangle,
    \end{equation*}
    where $i$ is a primitive fourth root of unity.
Now we check whether reduction is injective. When $p > 2$, the image of $i$ is still a primitive fourth root of unity. The subgroup $\Gamma'$ generated by $S, T^2$, and $U$ is tetrahedral. Reduction is injective on $\Gamma'$ since the elements $U, T^2, UT^2, U^2, S^2, S^3, US$ remain distinct, which means the image has cardinality at least 7. Then, the first isomorphism theorem of group theory shows that the homomorphism is injective. And reduction does not map $T$ into the image of $\Gamma'$, so the image of $\Gamma$ has at least 13 elements, so reduction is injective on $\Gamma$.

The paper by de Faria-Hutz \cite{Hutz15} gives examples of maps with exact automorphism group $\Gamma$. We first try reducing
\begin{equation*}
    f(z) = \frac{-z^5 + 5z}{5z^4 - 1}.
\end{equation*}
The resultant is $-2^{12} \cdot 3^4$, so the reduced map $f_p$ is degree 5 for all $p > 3$.

This gives us maps for every $p > 3$ with automorphism group containing $\Gamma_p$, but we need to check for extra automorphisms. Suppose $f$ has an extra automorphism. By the classification of subgroups (see the Appendix), all the finite subgroups of $\PGL_2(\bar{\FF}_p)$ strictly containing $S_4$ are $p$-irregular, so any extra automorphism implies the existence of an automorphism of order $p$. Then by Equation \eqref{degtest_primepower}, we know $\deg f \equiv 0, 1 \pmod p$, so $p \leq \deg f + 1= 6$. So we have exactly $\Gamma_p$ except possibly when $p = 5$. In that case, we need to try another $f$ since $f_5(z) = z^5$ has automorphism group $\PGL_2(\FF_5) \neq \Gamma_5$.

To account for the case $p = 5$, we try another choice,
\begin{equation*}
    f(z) = \frac{-7z^4-1}{z^7 + 7z^3}.
\end{equation*}
We compute the resultant $-2^{16} \cdot 3^4$ and find that 5 is not a factor, so the reduced map is degree 7. And the prime 5 passes the test of equation \eqref{degtest}, and, since we are working with a single prime, we compute directly in Sage \cite{sage} that the automorphism group is $S_4$.
\end{proof}

\begin{proof}[Proof of Theorem \ref{thm_realizable_p_regular} part \eqref{p_regular_icosahedral}]
The icosahedral group $A_5$ has $60 = 2^2 \cdot 3 \cdot 5$ elements, so this case only concerns primes $p \geq 7$. We need a choice of representation $\Gamma$ of $A_5$ in $\PGL_2(\Fpbar)$. We first consider the representation over $\bar{\Q}$ that was used by Klein \cite{klein_icosa}; denoting a chosen primitive fifth root of unity by $\zeta$, the matrix generators are
\begin{equation*}
S = \begin{bmatrix}
\zeta^3 & 0 \\
0 & \zeta^2
\end{bmatrix}, \quad
T =
\begin{bmatrix}
\zeta - \zeta^4 & -\zeta^2 + \zeta^3 \\
-\zeta^2 + \zeta^3 & \zeta - \zeta^4
\end{bmatrix}.
\end{equation*}
Next we verify that the reduction mod $p$ homomorphism is injective. Since $A_5$ is simple, the possibilities for the kernel are the trivial group and all of $A_5$, and the kernel does not contain $S$ as long as $p \neq 5$, so the kernel in our case is trivial.

The Appendix shows that, if a map has automorphism group strictly larger than $A_5$, then its automorphism group is $p$-irregular, so the same method as the previous section applies.

Doyle and McMullen provide examples of maps with exact automorphism group $A_5$ over $\bar{\Q}$ \cite{DM}. We try
\begin{equation*}
    f(z) = \frac{z^{11} + 66z^6 - 11z}{-11z^{10} - 66z^5 + 1}.
\end{equation*}
The resultant is divisible only by 2, 3, and 5, so the reduced map is degree 11 for all $p > 5$. Equation \eqref{degtest} shows that the only primes for which we may pick up extra automorphisms are $p = 2,3,5,11$. In fact, when $p = 11$, our example reduces to $z^{11}$, which has exact automorphism group $\PGL_2(\FF_{11})$.

So for the case $p = 11$, we try a different map. We check
\begin{equation*}
    f(z) = \frac{-57 z^{15} + 247 z^{10} + 171 z^5 + 1}{-z^{19} + 171z^{14} - 247z^{9} - 57z^4}.
\end{equation*}
We confirm that 11 does not divide the resultant, so the map is degree 15 after reduction; then \eqref{degtest} shows that $f_{11}$ has $p$-regular automorphism group, and we compute $\Aut(f_{11}) \cong A_5$.
\end{proof}

\begin{rmk}
    When $p = 3$, there is a subgroup of $\PGL_2(\bar{\FF}_3)$ isomorphic to $A_5$, but it is $p$-irregular. In Section \ref{sect_irreg_a5}, we checked that $A_5$ is realizable when $p = 3$ directly, rather than by reducing a map over $\bar{\QQ}$ modulo 3.
\end{rmk}

The tetrahedral group $A_4$ is a bit more difficult to analyze than the previous cases because the representations of $A_4$ in $\PGL_2(\bar{\FF}_p)$ are subrepresentations of $S_4$, which is also $p$-regular. There is also an additional curiosity in that the maps which invariant theory furnishes over $\bar{\Q}$ are not defined over $\Q$, for the particular representation we work with. This does not affect our calculation, but it is interesting.

\begin{proof}[Proof of Theorem \ref{thm_realizable_p_regular} part \eqref{p_regular_tetrahedral}]
Since $|A_4| = 12$, we work with $p \geq 5$. Let $\Gamma$ be the $\bar{\Q}$-representation of $A_4$ with matrix generators
\begin{equation*}
\left\{\begin{bmatrix}
i & i \\
1 & -1 \\
\end{bmatrix},\quad
\begin{bmatrix}
-1 & 0 \\
0 & 1 \\
\end{bmatrix}, \quad
\begin{bmatrix}
0 & 1 \\
1 & 0
\end{bmatrix}
\right\},
\end{equation*}
where $i$ is a primitive fourth root of unity. In the proof of Theorem \ref{thm_realizable_p_regular} part (\ref{p_regular_octahedral}), we showed that the reduction map is injective for this representation.

DeFaria-Hutz \cite{Hutz15} provides examples of maps over $\bar{\Q}$ with exact automorphism group $\Gamma$. We first try
$$ f(z) = \frac{\sqrt{-3}z^2 - 1}{z^3 + \sqrt{-3}z}.$$
The resultant has just 2 as a prime factor, so for $p > 2$ the degree is still 3 after reduction.

Next we check against extra automorphisms. The argument of Faber \cite[Proposition 4.14, 4.17]{Faber} shows that each tetrahedral subgroup $\Gamma$ of $\PGL_2(\bar{\FF}_p)$ is uniquely contained in an octahedral group. The cited argument starts with a particular choice of $\Gamma$ and calculates the copy of $S_4$; since the argument uses a different choice of $\Gamma$ than we do, we are using the fact that every tetrahedral subgroup is conjugate in $\PGL_2(\bar{\FF}_p)$.

In our case the octahedral group is, as described previously, generated by $\Gamma$ together with
\begin{equation*}
\begin{bmatrix}
i & 0 \\
0 & 1
\end{bmatrix}.
\end{equation*}
We check directly that this matrix is not an automorphism of $f$, even after reduction, by starting from the equation $f(iz) = if(z)$ and simplifying. The calculation is omitted.

Because the automorphism group of $f$ is not isomorphic to $S_4$, if there were remaining automorphisms, then the automorphism group would be $p$-irregular. The test of equation \eqref{degtest} shows that $f_p$ has $p$-regular automorphism group except possibly when $p = 2, 3$, and these primes are not present in this case.

To prove that $f$ is of minimal degree, we need only rule out the possibility that a degree 2 map has automorphism group $A_4$. This follows from \eqref{degtest}, since $A_4$ contains an element of order 4.
\end{proof}


\section{ Theoretical tools for discovering examples }
\label{sect_dm}

In Theorem \ref{thm_realizable_p_regular} and Theorem \ref{thm_realizable_p_irregular}, we showed through explicit constructions that, for any prime power $q$, every subgroup of $\PGL_2(\FF_q)$ arises as the automorphism group of a dynamical system. For instance, we calculated that $\PSL_2(\FF_q)$ is the automorphism group of a certain dynamical system of degree at most $\frac{1}{2}(q^3 - 2q^2 + q + 2)$. In this section, we develop the theoretical tools that explain how we arrived at these constructions. We present the motivating theorems, then develop the proofs in stages.

Our work is modeled on the theory over $\C$. In work on the quintic, Doyle and McMullen \cite{DM} proved a version of the following structure theorem for rational maps of $\PP^1(\C)$ with automorphisms. The theorem statement requires definitions from invariant theory, which we defer to Section \ref{sect_prelim_invar}.

\begin{thm}[{{Doyle-McMullen \cite[Theorem 5.2]{DM}}}] \label{thm_dm}
Suppose that $\Gamma$ is a subgroup of $\PGL_2(\CC)$. Let $\hat{\Gamma}$ be the preimage of $\Gamma$ in $\SL_2(\CC)$. Then every rational map $f$ such that $\deg(f) \geq 2$ and $\Gamma \subseteq \Aut(f)$ arises in the form
$$[x : y] \mapsto \left[xF + \dd{G}{y} : yF - \dd{G}{x}\right],$$
where $F$ and $G$ are homogeneous, relatively invariant polynomials for the same character of $\hat{\Gamma}$, such that $F=0$ or $\deg(F) + 1 = \deg(G) - 1 = \deg(f)$.
\end{thm}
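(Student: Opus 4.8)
The plan is to prove the theorem by writing down an explicit inverse to the displayed operation and then checking equivariance; this avoids the complex-analytic input used in the original argument. Fix a reduced representative $f = [P : Q]$ with $P, Q$ homogeneous of degree $d = \deg f \geq 2$ and $\gcd(P, Q) = 1$, and let $\tilde f = (P, Q) \colon \C^2 \to \C^2$ be the associated polynomial map. First I would set
\[
    G := \frac{1}{d+1}\bigl(yP - xQ\bigr), \qquad F := \frac{1}{d+1}\left(\dd{P}{x} + \dd{Q}{y}\right),
\]
and check, using Euler's identity $x\,\dd{\Phi}{x} + y\,\dd{\Phi}{y} = (\deg\Phi)\,\Phi$ applied to $P$, $Q$, $F$, $G$ in turn, that $xF + \dd{G}{y} = P$ and $yF - \dd{G}{x} = Q$, so that $f$ really does arise in the asserted form. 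Here $G$ is homogeneous of degree $d+1$, and $G \not\equiv 0$: otherwise $yP = xQ$ would force $P = xR$, $Q = yR$, and coprimality would make $R$ constant, so $f = [x:y]$ would be the identity, against $d \geq 2$. Likewise $F$ is homogeneous of degree $d - 1$ or identically zero. Hence the degree constraints ($F = 0$, or $\deg F + 1 = \deg G - 1 = d$) hold automatically, and everything left to prove is the relative invariance of $F$ and $G$.

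For that, I would lift the hypothesis $\Gamma \subseteq \Aut(f)$ to $\SL_2$. Given $\gamma \in \Gamma$ and any $\hat\gamma \in \SL_2(\C)$ lying above it, the two polynomial maps $\tilde f \circ \hat\gamma$ and $\hat\gamma \circ \tilde f$ are both lifts of the projective map $f \circ \gamma = \gamma \circ f$; since both are given by coprime homogeneous pairs of degree $d$ defining the same map of $\PP^1$, they differ by a nonzero scalar $\chi(\hat\gamma) \in \C^\times$:
\[
    \tilde f \circ \hat\gamma = \chi(\hat\gamma)\,\hat\gamma \circ \tilde f .
\]
Applying this relation to a product $\hat\gamma\hat\delta$ and using $\tilde f \not\equiv 0$ shows $\chi \colon \hat\Gamma \to \C^\times$ is a homomorphism, i.e.\ a character; note $\chi(-I) = (-1)^{d-1}$, which is why one must work with $\hat\Gamma$ rather than $\Gamma$.

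It then remains to identify $F$ and $G$ as relative invariants for this single character. For $G$, writing $\omega(u,w) = u_1 w_2 - u_2 w_1$ for the standard symplectic form on $\C^2$ and $v = (x,y)$, we have $G = -\tfrac{1}{d+1}\,\omega\bigl(v, \tilde f(v)\bigr)$; replacing $v$ by $\hat\gamma v$, applying the displayed relation, and using $\omega(\hat\gamma u, \hat\gamma w) = \det(\hat\gamma)\,\omega(u,w) = \omega(u,w)$ gives $G \circ \hat\gamma = \chi(\hat\gamma)\,G$. For $F$, note $F = \tfrac{1}{d+1}\operatorname{tr}\bigl(J\tilde f\bigr)$ with $J\tilde f$ the Jacobian matrix of $\tilde f$; differentiating the displayed relation by the chain rule yields $J\tilde f(\hat\gamma v) = \chi(\hat\gamma)\,\hat\gamma\, J\tilde f(v)\,\hat\gamma^{-1}$, and taking traces (which are conjugation-invariant) gives $F \circ \hat\gamma = \chi(\hat\gamma)\,F$. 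Thus $F$ and $G$ are homogeneous, relatively invariant for the common character $\chi$, with the required degrees, and the theorem follows.

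Each computation above is short; the real content is the realization that the Doyle--McMullen operation is literally invertible via the formulas for $F$ and $G$, after which surjectivity is immediate and only the two equivariance identities remain. The one genuinely delicate ingredient is the normalizing factor $\tfrac{1}{d+1}$: harmless over $\C$, it is exactly where the argument refuses to generalize verbatim, since in characteristic $p$ the inversion collapses when $p \mid (d+1)$. That is the obstruction the characteristic-$p$ analogue, Theorem~\ref{thm_dmcharp}, has to work around.
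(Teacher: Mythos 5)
Your proof is correct: the formulas $G = \tfrac{1}{d+1}(yP - xQ)$ and $F = \tfrac{1}{d+1}(P_x + Q_y)$ do invert the Doyle--McMullen operation (the two Euler-identity computations check out), $G \neq 0$ by the coprimality argument, and the equivariance of $G$ via the symplectic form and of $F$ via the conjugation-invariance of the trace both go through, with $\det\hat\gamma = 1$ used exactly where it must be. The paper itself does not prove this statement --- it cites Doyle--McMullen, whose original argument for surjectivity uses complex analysis, and only sketches the idea of passing back and forth between rational maps, invariant $1$-forms, and pairs $(F,G)$. What you have written is essentially the paper's own characteristic-$p$ machinery (Propositions \ref{rationalmapstoforms} and \ref{dmcharp}) specialized to $\C$ and unwound: the paper associates to $f$ the $1$-form $\omega_\Phi$, extracts $F$ as $\tfrac{1}{n+1}$ times the coefficient $h$ in $d\omega_\Phi = h\, dx\wedge dy$, and obtains $G$ from a characteristic-$p$ Poincar\'e lemma; your $F$ and $G$ are precisely those expressions with the exterior-algebra language stripped out, and your trace/symplectic-form equivariance arguments replace the paper's appeal to naturality of pullback and $d$. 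What your version buys is a short, self-contained, purely algebraic proof of the complex statement with no forms formalism and no analytic input; what the paper's formulation buys is a cleaner separation of the two correspondences (maps $\leftrightarrow$ forms, forms $\leftrightarrow$ polynomial pairs), which is what lets it isolate the degree obstruction $n \equiv -1 \pmod p$ in Proposition \ref{dmcharp} --- the same $\tfrac{1}{d+1}$ you correctly flag as the point where your argument would break in characteristic $p$. One small stylistic note: Euler's identity is only needed for $P$ and $Q$, not for $F$ and $G$ as your phrasing suggests.
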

The proof idea is that there are ways of going back and forth (not quite bijectively) between the following sets:
\begin{itemize}
    \item Rational maps of $\PP^1(\C)$ such that $\Gamma \subseteq \Aut(f)$;
    \item Homogeneous invariant polynomial differential 1-forms in $x, y$ over $\C$;
    \item Pairs $(F, G)$ of homogeneous invariant polynomials in $\C[x,y]$ such that $F=0$ or $\deg(F) + 2 = \deg(G)$.
\end{itemize}
In characteristic $p$, both the proofs and the results require modification, mainly because not all polynomials have antiderivatives. We prove the following variation.

\begin{thm} \label{thm_dmcharp}
Let $p$ be a prime, and let $q$ be a power of $p$.
\begin{enumerate}
    \item Suppose that $p > 2$ and $\Gamma$ is a $p$-irregular subgroup of $\PGL_2(\FF_q)$. Let $\hat{\Gamma}$ be the preimage of $\Gamma$ in $\SL_2(\FF_{q^2})$. Then every rational map $f$ such that $\deg(f) \geq 2$ and $\Gamma \subseteq \Aut(f)$ arises in the form
    \begin{equation}\label{eq_map_form}
        [x : y] \mapsto \left[ xF + \frac{\partial G}{\partial y} : yF - \frac{\partial G}{\partial x} \right],
    \end{equation}
    where $F$ and $G$ are homogeneous, relatively invariant polynomials over $\Fpbar$ for the same character of $\hat{\Gamma}$, such that $\deg(F) + 1 = \deg(G) - 1 = \deg(f)$.
    \item Let $p \geq 2$. Let $F$ and $G$ be homogeneous, relatively invariant polynomials over $\Fpbar$ for the same character of $\Gamma \subseteq \SL_2(\FF_q)$. Let $\bar{\Gamma}$ be the image of $\Gamma$ in $\PSL_2(\FF_q)$. If the expressions $xF + \dd{G}{y}$ and $yF - \dd{G}{x}$ are nontrivial homogeneous polynomials of the same degree, then the corresponding rational map $f$ of the form \eqref{eq_map_form} has $\bar{\Gamma} \subseteq \Aut(f)$.
\end{enumerate}
\end{thm}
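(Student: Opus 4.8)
The plan is to establish the two parts by the same mechanism that underlies the complex case: a dictionary relating rational self-maps $f$ of $\PP^1$ with $\Gamma \subseteq \Aut(f)$ to $\Gamma$-invariant polynomial objects in two variables, while being careful that in characteristic $p$ not every homogeneous polynomial is a partial derivative of another one (e.g. $x^{p-1}$), and that the jump from $\SL_2$-invariance of a vector of polynomials to $\PGL_2$-equivariance of the induced map on $\PP^1$ can introduce a projective scaling factor that must be tracked through a character of $\hat\Gamma$.

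\medskip

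\textbf{Part (2): sufficiency.} This is the easier direction and I would do it first, as a self-contained computation. Suppose $F, G \in \Fpbar[x,y]$ are homogeneous and relatively invariant for the \emph{same} character $\chi$ of $\Gamma \subseteq \SL_2(\FF_q)$; that is, for each $\gamma \in \Gamma$ we have $F \circ \gamma = \chi(\gamma) F$ and $G \circ \gamma = \chi(\gamma) G$. Write the associated map in vector form as $\Phi(x,y) = (xF + G_y,\ yF - G_x)$, where subscripts denote partial derivatives. The key identity is that the vector field $(xF, yF)$ is "radial" and the vector field $(G_y, -G_x)$ is the symplectic gradient (Hamiltonian vector field) of $G$; both transform well under $\SL_2$. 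Concretely, using the chain rule one checks that for $\gamma \in \SL_2$,
\begin{equation*}
    \begin{pmatrix} G_y \\ -G_x \end{pmatrix} \circ \gamma \ =\ \gamma^{-1} \cdot \begin{pmatrix} (G \circ \gamma)_y \\ -(G \circ \gamma)_x \end{pmatrix},
\end{equation*}
where the extra factor is exactly $\gamma^{-1}$ because $\det \gamma = 1$ makes the cofactor matrix equal to the inverse; the radial term obeys $\binom{x}{y}\circ\gamma = \gamma^{-1}\binom{x}{y}$ trivially (it \emph{is} $\gamma^{-1}$ applied), so after combining, $\Phi \circ \gamma = \chi(\gamma)\, \gamma^{-1} \Phi$. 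Projectivizing kills the scalar $\chi(\gamma)$, so on $\PP^1$ the map $f$ satisfies $f \circ \bar\gamma = \bar\gamma \circ f$ for $\bar\gamma$ the image in $\PSL_2$, i.e. $\bar\Gamma \subseteq \Aut(f)$, \emph{provided} $\Phi$ has no common polynomial factor forcing a degree drop — but that is exactly the hypothesis that $xF + G_y$ and $yF - G_x$ are nontrivial of the same degree. I expect this part to be essentially a clean verification of the transformation rule above plus a remark that relative invariance for the same character is what lets the scalars cancel.

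\medskip

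\textbf{Part (1): necessity.} Here I would mirror the Doyle--McMullen argument, lifting to $\SL_2$ over $\FF_{q^2}$ (the extension is needed so that the central extension $\hat\Gamma \to \Gamma$ exists and splits the way we want; this is also where $p>2$ is used, so that $\pm I$ are distinct and $\SL_2 \to \PGL_2$ is the expected $2$-to-$1$ cover rather than an isomorphism). Given $f$ with $\deg f \ge 2$ and $\Gamma \subseteq \Aut(f)$, pick a homogeneous lift $(P,Q)$ of $f$ of degree $d = \deg f$ with $\gcd(P,Q)=1$. The equivariance $f \circ \gamma = \gamma \circ f$ on $\PP^1$ lifts to $(P,Q)\circ\gamma = \lambda(\gamma)\, \gamma^{-1}(P,Q)$ for some scalar $\lambda(\gamma)$, and $\gamma \mapsto \lambda(\gamma)$ is a character of $\hat\Gamma$ (cocycle condition plus the fact that $\gcd(P,Q)=1$ pins down the scalar). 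One then decomposes $(P,Q)$ into its radial part and its "rotational" part: set $F := (xP + yQ)/(x^2+y^2)$ — no, better, following Doyle--McMullen, use the Euler-operator decomposition. Write $E = x\,\partial_x + y\,\partial_y$ for the Euler operator; any homogeneous vector field $(P,Q)$ of degree $d$ can be written as $(xF + G_y,\ yF - G_x)$ provided one can solve for $F$ and $G$, and the obstruction to finding $G$ is precisely whether the "curl" $\partial_x Q - \partial_y P$ (suitably normalized) is in the image of the Hamiltonian-gradient construction — which in characteristic $p$ is the crux, since $\partial_x$ and $\partial_y$ have kernels $\Fpbar[x^p,y]$ and $\Fpbar[x,y^p]$. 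The main obstacle, and the place I would spend the most care, is exactly this: showing that for the relevant degrees the decomposition exists over $\Fpbar$, and that $F,G$ inherit relative invariance for the same character $\lambda$ from $(P,Q)$. I would argue that $\Gamma$ being $p$-irregular forces, via the degree constraint $\deg(f) \equiv -1,0,1 \bmod p$ from \eqref{degtest} failing in the bad way — actually the cleaner route is: the $p$-irregular hypothesis means $\Gamma$ contains an element of order $p$, hence (by Faber-Manes-Viray, as quoted in the excerpt) $f$ has the form $\psi(z^q - z) + z$, which already exhibits enough structure to build $G$ by hand as an antiderivative that exists because the relevant exponents avoid multiples of $p$. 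The bookkeeping that $\deg F + 2 = \deg G = \deg f + 1$ then follows by comparing homogeneous degrees on both sides of $(P,Q) = (xF+G_y,\ yF-G_x)$, and the equality case $F = 0$ from the complex statement is ruled out here precisely because in the $p$-irregular setting the radial part is nonzero (the map $\psi(z^q-z)+z$ has a nonvanishing "$+z$" contributing to $xF$). I anticipate that reconciling the general lift-and-decompose argument with the explicit $\psi$ form — and verifying no spurious common factor appears so that the degree really is $\deg F + 1$ — will be the delicate part, and I would isolate it as a lemma about antiderivatives of invariant forms in characteristic $p$.
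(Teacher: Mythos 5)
Your proposal follows essentially the same route as the paper. Your part (2) is the paper's Proposition \ref{dmcharp}~\eqref{dmcharp_polystoform} combined with Proposition \ref{rationalmapstoforms}, just written in the dual vector-field language rather than in terms of the $1$-form $F\lambda + dG$ with $\lambda = y\,dx - x\,dy$; the transformation computation is right up to a $\gamma$ versus $\gamma^{-1}$ convention, and your observation that the hypothesis on the two coordinate polynomials is exactly what rules out a degree drop matches the paper. For part (1), the paper carries out precisely the ``general'' decomposition you sketch: lift $f$ to a relatively invariant $1$-form $\omega_{\Phi}$, peel off a multiple of $\lambda$ so that the remainder is closed, and integrate the remainder by the Euler-operator formula $G = \tfrac{1}{n+1}(x\eta_1 + y\eta_2)$ --- this is exactly the ``lemma about antiderivatives of invariant forms in characteristic $p$'' that you defer (Lemma \ref{poincare} and Proposition \ref{dmcharp}), with the needed hypothesis $\deg f \not\equiv -1 \pmod{p}$ supplied by the congruence \eqref{degtest} together with $p > 2$, and with a short separate argument that the resulting $F$ and $G$ are relative invariants for the same character. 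The alternative you label ``the cleaner route'' --- building $G$ by hand from the normal form $\psi(z^q - z) + z$ --- is not what the paper does and would in fact be harder: that normal form only holds up to conjugation, it only reflects the single order-$p$ element rather than all of $\hat{\Gamma}$, and you would still have to prove invariance of the constructed $F$ and $G$ for the full group. The paper uses the $\psi$-form solely to obtain the degree congruence; everything else goes through the general Poincar\'e-type lemma. So the skeleton of your argument is the right one, but the one step you have not actually supplied is the one the paper isolates and proves.
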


One of the difficulties in applying Theorem \ref{thm_dmcharp} to the realizability problem is that the resulting map $f$ may only satisfy $\Gamma \subseteq \Aut(f)$, while we are looking for equality. In trying to realize $\PSL_2(\FF_q)$, for many choices of invariants, the machinery of Theorem \ref{thm_dmcharp} resulted in a map with automorphism group $\PGL_2(\FF_q)$. We observed that the degree of the minimal example of exact $\PSL_2(\FF_q)$ automorphism group was a cubic polynomial in $q$. We formulate this observation as Theorem \ref{thm_pslminimal}, with the tools for the proof coming from Propositions \ref{rationalmapstoforms} and \ref{dmcharp}.

\begin{thm}\label{thm_pslminimal}
Let $p > 2$ and let $q$ be a power of $p$. The degree of a rational map with automorphism group $\PSL_2(\FF_q)$ must be at least
\begin{equation*}
    \frac{1}{2} \left( q^3 - 2q^2 + q + 2 \right).
\end{equation*}
\end{thm}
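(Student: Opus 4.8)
The plan is to use the characteristic-$p$ Doyle--McMullen correspondence (Theorem \ref{thm_dmcharp}(1)) to reduce the problem to a question about modular invariants of $\widehat{\PSL}_2(\FF_q)$ inside $\SL_2(\FF_{q^2})$. If $f$ has $\Aut(f) = \PSL_2(\FF_q)$ and $\deg f \geq 2$, then $\PSL_2(\FF_q) \subseteq \Aut(f)$, so by Theorem \ref{thm_dmcharp}(1) we may write $f$ in the form $[xF + G_y : yF - G_x]$ with $F, G$ homogeneous relative invariants for a common character of the preimage group $\hat\Gamma$, and $\deg f = \deg F + 1 = \deg G - 1$. Thus $\deg f$ is governed by the minimal degrees in which such relatively invariant $F$ and $G$ can exist (subject to the character-matching and the nondegeneracy of the resulting map). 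First I would pin down the ring of invariants and the relative invariants: the invariant ring of $\PSL_2(\FF_q)$ acting on $\FF_q[x,y]$ (the Dickson-type setting) is classically known to be generated by $u = x^q y - x y^q$ (degree $q+1$) and $c_1$ (degree $q^2 - q$), and the relative invariants for nontrivial characters of $\hat\Gamma$ are spanned by monomials $u^a c_1^b$ times suitable powers, with the character determined by the exponents modulo the order of the relevant character group.

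The key computation is then: among pairs $(F,G)$ of relative invariants for a common character with $\deg G = \deg F + 2$, which gives the smallest $\deg F + 1$, \emph{while ensuring the map does not degenerate and does not acquire extra automorphisms forcing it up to $\PGL_2(\FF_q)$}? The construction in Theorem \ref{thm_realizable_p_irregular}(6) takes $F = u^a$, $G = c_1^b$ with $a = \tfrac{1}{2}(q(q-3)+4)$, $b = \tfrac{q-1}{2}$, giving $\deg F = a(q+1)$ and $\deg f = a(q+1) - 1 = \tfrac12(q^3 - 2q^2 + q + 2)$; one checks $\deg G = b(q^2-q) = \deg F + 2$ and that the character of $u^a$ matches that of $c_1^b$ (this is where the specific congruence conditions on $a$ and $b$ come from). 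So the content of Theorem \ref{thm_pslminimal} is the \emph{lower} bound: I would argue that any smaller-degree relatively invariant pair either (i) uses a trivial character, in which case $F, G$ are genuine invariants in $\FF_q[u, c_1]$ and the resulting map is conjugation-invariant under all of $\PGL_2(\FF_q)$, contradicting $\Aut(f) = \PSL_2(\FF_q)$; or (ii) uses a nontrivial character, which by examination of the character group of $\hat\Gamma$ forces the exponents of $u$ and $c_1$ in any monomial relative invariant to be large enough that $\deg F \geq \tfrac12(q^3 - 2q^2 + q)$. The arithmetic here is that the character by which $u$ transforms has order dividing something like $q-1$ (coming from the scalar and diagonal parts of $\SL_2$), and matching it against the character of a power of $c_1$ forces $a$ to be at least roughly $\tfrac12 q(q-3)$.

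The main obstacle, as I see it, is handling case (ii) carefully: one must rule out \emph{mixed} relative invariants $F = \sum_j u^{a_j} c_1^{b_j}$ (not just pure monomials) of small degree transforming by a nontrivial character, and also rule out the possibility that a lower-degree $F$ pairs with some $G$ where the degree-$2$ gap is achieved by a clever combination rather than by $u^a$ versus $c_1^b$. This requires knowing the full graded structure of the relative invariant modules $\FF_q[x,y]^{\hat\Gamma}_\chi$ for each character $\chi$, together with which characters actually occur, and then minimizing $\deg F$ subject to $\deg G = \deg F + 2$ and both being nonzero in the respective $\chi$-eigenspaces; I would organize this as a finite check using the Poincaré series of the invariant ring and of each isotypic component (obtainable from Molien-type series adapted to the modular setting, or from the known module structure over $\FF_q[u,c_1]$). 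A secondary subtlety is confirming that the minimal-degree candidate pair actually yields a nondegenerate map (no cancellation in $xF + G_y$ and $yF - G_x$ dropping the degree), but this is exactly what the explicit computation in the proof of Theorem \ref{thm_realizable_p_irregular}(6) already verifies for the extremal pair, so the remaining work is purely the lower-bound bookkeeping on characters and degrees.
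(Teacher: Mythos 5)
Your overall framework --- reduce via the characteristic-$p$ Doyle--McMullen correspondence to a degree-minimization problem over the invariant ring $\FF_q[u_1,u_2]$, then do Diophantine bookkeeping --- is the same as the paper's. But your central dichotomy contains a genuine error that would make the argument collapse. In your case (i) you claim that if $F,G$ transform by the trivial character (i.e.\ are genuine invariants in $\FF_q[u,c_1]$), then the resulting map is automatically invariant under all of $\PGL_2(\FF_q)$, contradicting exactness. This is false, and it contradicts the paper's own construction: the extremal map of Theorem \ref{thm_realizable_p_irregular}\eqref{thm_realizable_6} is built precisely from the absolute invariants $F = u^a$, $G = c_1^b$, yet has exact automorphism group $\PSL_2(\FF_q)$. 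Absolute invariance of $F$ and $G$ under $\SL_2(\FF_q)$ says nothing directly about invariance of the \emph{form} $F\lambda + dG$ under the larger group $\GL_2(\FF_q)$; the latter depends on whether $F\lambda$ and $dG$ transform by the \emph{same} power of $\det$, since $u_1$ and $\lambda$ are $\det$-relative invariants of $\GL_2(\FF_q)$ while $u_2$ is absolute. Worse, your case (ii) is vacuous for $q \geq 4$: $\SL_2(\FF_q)$ is perfect, so it has no nontrivial linear characters at all (the paper invokes exactly this, via Dickson, to force $F,G$ into $\FF_q[u_1,u_2]$ in the first place). So every pair falls into your case (i), which you then rule out entirely --- your argument as stated would prove that no rational map has exact automorphism group $\PSL_2(\FF_q)$.

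The correct mechanism, which is what the paper does, is the following. All characters of $\hat\Gamma = \SL_2(\FF_q)$ are trivial, so $F$ and $G$ are forced to be polynomials in $u_1, u_2$. The obstruction to exactness is not the character of $\hat\Gamma$ but accidental relative invariance under the \emph{bigger} group: writing $F = \alpha u_1^{a_1}u_2^{a_2}$ and $G = \beta u_1^{b_1}u_2^{b_2}$, the form $F\lambda + dG$ is a $\det$-power relative invariant of $\GL_2(\FF_q)$ (hence yields $\Aut(f) \supseteq \PGL_2(\FF_q)$) exactly when $a_1 + 1 \equiv b_1 \pmod{q-1}$. Minimizing the degree over monomial pairs that \emph{avoid} this congruence, subject to $\deg G = \deg F + 2$ in the numerical semigroup generated by $q+1$ and $q^2-q$, produces the bound $\frac{1}{2}(q^3-2q^2+q+2)$; non-monomial $F$ or $G$ cannot occur below degree $\lcm(q+1, q^2-q) = \frac{1}{2}q(q-1)(q+1)$, which is larger. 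You correctly flagged the need to handle mixed invariants and nondegeneracy, but without replacing your character-of-$\hat\Gamma$ dichotomy by the $\det$-character condition on $\GL_2(\FF_q)$, the lower-bound argument does not go through.
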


We omit the case $p = 2$ because then $\PSL_2(\FF_q)$ and $\PGL_2(\FF_q)$ coincide.

A second application of Theorem \ref{thm_dmcharp} is to the problem of realizing $A_5$ over $\bar{\FF}_3$. Over $\CC$, the minimal degree of a map with exact automorphism group $A_5$ is 11. The example over $\bar{\FF}_3$ described in Theorem \ref{thm_realizable_p_irregular} part \ref{thm_realizable_irreg_a5} has degree 21. This turns out to be the minimal degree for $A_5$ when $p = 3$.
\begin{thm} \label{thm_a5_minimal}
    A rational map over $\bar{\FF}_3$ with automorphism group $A_5$ has degree at least $21$.
\end{thm}

\subsection{Preliminaries from invariant theory} \label{sect_prelim_invar}
Let $k$ be a field. Let $V$ be a 2-dimensional vector space over $k$. Let $H$ be a subgroup of $\GL_2(k)$. Let $P[V]$ be the algebra of polynomial functions on $V$, that is, the symmetric algebra of the dual space $V^*$. Then $H$ acts on $P[V]$ by \emph{pullback},
$$ H \times P[V] \to P[V],$$
$$ (h, F) \mapsto F \circ h.$$
We write $h^* F = F \circ h$.
The elements of $P[V]$ fixed by this action form a subring of $P[V]$, denoted $P[V]^H$, called the \emph{ring of (polynomial) invariants}.

Let $\chi$ be a character of $H$, that is, a homomorphism $H \mapsto k^{\ast}$. The set
$$\{ F \in P[V] : \forall h\in H, h^* F = \chi(h) F \} $$
forms a $P[V]$-submodule of $P[V]$ called the \emph{module of relative (polynomial) invariants}, denoted $P[V]^{H}_{\chi}$.

We make the analogous definitions for formal differential forms, following Smith \cite{smith}. Let $\Lambda[V]$ be the exterior algebra on the dual space $V^*$. Let $E[V] = P[V] \otimes \Lambda[V]$. The algebra $E[V]$ is called the \emph{polynomial tensor exterior algebra}. We think of its elements as formal differential forms defined only with polynomials.

We recall the basic properties of $E[V]$, for convenience choosing a basis $v, w$ of $V$.
\begin{enumerate}
    \item The basis $v, w$ of $V$
        induces a basis $x, y$ of $P[V]$ and algebra generators $dx, dy$ of $\Lambda[V]$. The exterior algebra $\Lambda[V]$ is spanned as a $k$-vector space by $1, dx, dy, dx \wedge dy$. The polynomial algebra $P[V]$ is infinite-dimensional as a $k$-vector space and is spanned by monomials in $x, y$.
    \item There is a $k$-linear map $d \colon E[V] \to E[V]$ called the \emph{exterior derivative}. It is defined as follows. We set
        $$d(dx) = d(dy) = 0,$$
        $$d(x) = dx,$$
        $$d(y) = dy.$$
        Then we extend $d$ to all of $E[V]$ by linearity and the Leibniz rule
        $$d(\theta_1 \theta_2) = (d \theta_1) \theta_2 + \theta_1 (d \theta_2).$$
        In particular, for any $f \in P[V]$, we have
        $$df = \dd{f}{x} dx + \dd{f}{y} dy. $$
        Forms in the kernel of $d$ are \emph{closed}, and forms in the image of $d$ are \emph{exact}. For any $\omega \in E[V]$, we have $d(d \omega) = 0$, so exact forms are closed.
    \item
        A group $H$ of linear self-maps of $V$ induces a pullback action on $E[V]$, as we now explain. Elements of $E[V]$ are sections of the bundle of differential forms on $V$, where $V$ is viewed as a variety. Let $TV$ be the tangent bundle on $V$. Let $D$ denote the standard Jacobian matrix derivative. Then any algebraic map $h: V \to V$ induces a pushforward map $h_* : TV \to TV$. It is defined as follows: given a tangent vector $\delta$ to a point $v \in V$, the pushforward $h_*\delta$ is the tangent vector $(Dh)(\delta)$ to $h(v)$. Since $V$ is a vector space, we may canonically identify the tangent spaces $(TV)_v$ and $(TV)_{h(v)}$ with $V$. In our setting, the self-map $h$ is linear rather than just algebraic, so it is equal to its own Jacobian with this identification. Thus
        $$h_* \delta = h(\delta).$$
        The pullback of a form $\theta \in E[V]$ by an algebraic map $h \colon V \to V$ is defined by
        $$h^* \theta = \theta \circ h_*.$$
        Thus any group $H$ of linear self-maps of $V$ induces an action on $E[V]$. It follows from the definition that $h^*$ respects the algebra structure of $E[V]$ and that $h^*$ commutes with the exterior derivative $d$.
\end{enumerate}

A form $\omega \in E[V]$ is called \emph{relatively invariant} for $H$ (with respect to a character $\chi$) if for all $h \in H$, we have
\begin{equation*}
    h^{\ast} \omega = \chi(h) \omega.
\end{equation*}
If $\chi$ is the trivial character, then $\omega$ is also called an \emph{absolute} invariant.

The set of relatively invariant forms for $H$ with character $\chi$ form the \emph{module of relatively invariant (formal differential) forms}, denoted $E[V]^{H}_{\chi}$.

There are a number of natural gradings to consider on $E[V]$. Our convention for the grading is as follows. After choosing generators $x, y$ for $P[V]$ and the corresponding basis $dx, dy$ for the 1-forms in the exterior algebra, we assert that $x$ and $y$ have degree 1 and that $dx$ and $dy$ have degree 0; then we extend multiplicatively. In particular, a homogeneous 1-form is one where $dx$ and $dy$ have coefficients which are homogeneous polynomials of the same degree. (Our convention is that $0$ is of every degree.)

\subsection{From rational maps to 1-forms and back}

We follow \cite[Section 5.III]{DM}. Viewing $V$ as a variety, each tangent space of $V$ is canonically isomorphic to $V$. Thus, given any polynomial map
\begin{align*}
    &\Phi: \A^2 \to \A^2,\\
    &\Phi(x,y) = (\Phi_1(x,y), \Phi_2(x,y)),
\end{align*}
we can associate a vector field $X_\Phi$ on $V$; it sends $(x,y)$ to the point in $(TV)_{(x,y)}$ corresponding to $\Phi(x,y)$. For any linear map $h \colon V \to V$, we may consider the pushforward $h_* X_\Phi$ and the conjugate map $\Phi^h$. It follows immediately from the definition of $X_\Phi$ that in fact,
$$h_* X_\Phi = X_{\Phi^h}.$$
Throughout, let $\omega = dx \wedge dy$. Let $\omega_\Phi$ be the 1-form defined by contraction of $\omega$ by the vector field $X_\Phi$; that is,
$$\omega_\Phi( \cdot ) = \omega( X_\Phi, \cdot ).$$
In coordinates,
$$\omega_\Phi = \Phi_2 dx - \Phi_1 dy.$$
It follows from the definition of $\omega_\Phi$, or its expression in coordinates, that for any invertible linear map $h \colon V \to V$, we have
$$h^\ast \omega_\Phi = \omega_{\Phi^h}.$$
In particular, we have $\Phi = \Phi^h$ if and only if $h^\ast \omega_\Phi = \omega_\Phi$.

We can use this connection to translate data about the automorphism group of $f$ into invariant theory, as follows.

Let $\Gamma \subseteq \PGL_2(k)$. Let $\hat{\Gamma}$ be a subgroup of $\GL_2(k)$ that is mapped to $\PGL_2(k)$ by projectivization. Let $f$ be a rational map. If $\gamma \in \Gamma$ is an automorphism of $f$, then $f^\gamma = f$. Let $\Phi$ be any lift of $f$ to a polynomial function on $\A^2$. Specifically, $\Phi$ is a pair of homogeneous polynomials that define the same endomorphism of $\PP^1$ as $f$. Let $M$ be any preimage of $\gamma$ in $\hat{\Gamma}$. Since $f = f^\gamma$, there exists some value $\chi(M) \in k^{\ast}$ such that $\Phi^M = \chi(M) \Phi$. In fact, $\chi(M)$ is independent of the choice of lift $\Phi$. The rule $M \mapsto \chi(M)$ defines a character $\chi: \hat{\Gamma} \to k^{\ast}$.
We have
\begin{equation*}
    M^{\ast} \omega_{\Phi} = \omega_{\Phi^M} = \omega_{\chi(M) \Phi} = \chi(M) \omega_{\Phi}.
\end{equation*}
So, if $f$ has automorphism group containing $\Gamma$, then for any lift $\Phi$ of $f$, the 1-form $\omega_{\Phi}$ is a relative invariant of $\hat{\Gamma}$ with respect to some character.

Conversely, to a nonzero homogeneous 1-form $\omega = f_1 dx + f_2 dy$, we can associate the rational map $r(\omega) := [-f_2 : f_1]$. If $\omega$ is relatively invariant for a subgroup $H$ of $\GL_2$, then the elements of the image $\bar{H}$ of $H$ in $\PGL_2$ are automorphisms of $r(\omega)$.
We have established the following proposition.

\begin{prop} \label{rationalmapstoforms}
Let $\Gamma \subseteq \PGL_2(k)$ and let $\hat{\Gamma}$ be a subgroup of $\GL_2(k)$ that maps to $\Gamma$ by projectivization. Let $f$ be a rational map of $\PP^1$.
\begin{enumerate}
    \item If $f$ has automorphism group containing $\Gamma$, then for any lift $\Phi$ of $f$, the 1-form $\omega_{\Phi}$ is a relative invariant of $\hat{\Gamma}$ with respect to some character.
    \item If $\omega_{\Phi}$ is a relative invariant of a group $H$, then $\bar{H} \subseteq \Aut(f)$.
\end{enumerate}
\end{prop}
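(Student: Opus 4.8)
The plan is to shuttle between the polynomial map $\Phi$ and the $1$-form $\omega_\Phi$ using the compatibility identity $M^{\ast}\omega_\Phi = \omega_{\Phi^M}$ recorded above, together with two elementary facts: the assignment $\Phi \mapsto \omega_\Phi$ is $k$-linear and injective (the coefficients of $dx$ and $dy$ recover the components of $\Phi$ up to sign), and any two lifts of a self-map of $\PP^1$ by coprime homogeneous pairs of the same degree differ by a nonzero scalar.

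For part (1), I would fix a lift $\Phi = (\Phi_1,\Phi_2)$ of $f$, take $\gamma \in \Gamma \subseteq \Aut(f)$, and choose a preimage $M \in \hat\Gamma \subseteq \GL_2(k)$. Since $f^\gamma = f$, the pair $\Phi^M$ is again a lift of $f$ of the same degree with coprime components, so $\Phi^M = \chi(M)\,\Phi$ for a unique scalar $\chi(M) \in k^{\ast}$; this scalar is unchanged if $\Phi$ is rescaled, so it does not depend on the chosen lift. Next I would verify that $M \mapsto \chi(M)$ is a homomorphism $\hat\Gamma \to k^{\ast}$: this follows from $\chi(MN)\,\Phi = \Phi^{MN} = (\Phi^M)^N = \chi(M)\chi(N)\,\Phi$, using associativity of conjugation and that scalar multiplication commutes with conjugation by a linear map. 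Finally, applying $\Phi \mapsto \omega_\Phi$ and the compatibility identity gives $M^{\ast}\omega_\Phi = \omega_{\Phi^M} = \chi(M)\,\omega_\Phi$ for every $M \in \hat\Gamma$, so $\omega_\Phi \in E[V]^{\hat\Gamma}_\chi$.

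For part (2), suppose $\omega_\Phi$ is relatively invariant for $H \subseteq \GL_2$ with some character $\chi$, and note that $r(\omega_\Phi)$ is exactly the rational map $f = [\Phi_1 : \Phi_2]$ induced by $\Phi$. For each $h \in H$ I would compute $\omega_{\Phi^h} = h^{\ast}\omega_\Phi = \chi(h)\,\omega_\Phi = \omega_{\chi(h)\Phi}$, and then use injectivity of $\Phi \mapsto \omega_\Phi$ to conclude $\Phi^h = \chi(h)\,\Phi$. Since $\Phi^h$ and $\Phi$ are proportional, the induced maps on $\PP^1$ agree, i.e.\ $f^{\bar h} = f$, so $\bar h \in \Aut(f)$; as $h$ ranges over $H$, this gives $\bar H \subseteq \Aut(f)$.

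I do not anticipate a genuine obstacle: the entire argument is a matter of threading the three translations (automorphism of $f$ $\Rightarrow$ proportionality $\Phi^M = \chi(M)\Phi$ $\Rightarrow$ relative invariance of $\omega_\Phi$, and the reverse for part (2)). The only point that needs real care is confirming that the individual proportionality constants $\chi(M)$ assemble into an honest character, which the computation $\Phi^{MN} = (\Phi^M)^N$ settles; a secondary bookkeeping hazard is keeping the conventions for $\Phi^M$, $h\cdot F$, and pullback mutually consistent, but these are pinned down precisely by the identity $M^{\ast}\omega_\Phi = \omega_{\Phi^M}$.
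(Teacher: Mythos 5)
Your proposal is correct and follows essentially the same route as the paper's own argument (which appears as the discussion immediately preceding the proposition): the automorphism condition $f^\gamma=f$ forces $\Phi^M=\chi(M)\Phi$, the scalars assemble into a character, and the identity $M^{\ast}\omega_\Phi=\omega_{\Phi^M}$ translates this into relative invariance, with the converse obtained by running the same computation backwards. You supply a few details the paper leaves implicit (the homomorphism check via $\Phi^{MN}=(\Phi^M)^N$ and the injectivity of $\Phi\mapsto\omega_\Phi$ in part (2)), but the structure of the argument is identical.
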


Some remarks:

\begin{enumerate}
\item These associations, from rational maps to nonzero homogeneous 1-forms and back, are almost inverse, but not quite. There is no well-defined association $f \mapsto \omega_\Phi$, except up to scaling. Even so, we can say $r(\omega_{\Phi}) = f$.
\item We have $\deg(\omega_{\Phi}) = \deg(f)$. But because of the possibility of a common factor, the most we can say about $r(\omega)$ is that $\deg(r(\omega)) \leq \deg(\omega)$. Equality occurs if and only if $\omega$ has no nonzero homogeneous polynomial of positive degree as a factor.
\end{enumerate}

\subsection{From 1-forms to polynomials and back}

The next proposition links invariant 1-forms to pairs of invariant polynomials. We defer the proof to the end of this subsection.

\begin{prop} \label{dmcharp}
Let $\lambda = y dx - x dy$.
\begin{enumerate}
    \item \label{dmcharp_formtopolys}
Let $k$ be a field of characteristic $p$. Let $\eta$ be a homogeneous 1-form of degree $n$, where
\begin{equation} \label{eq_form_degree_hypoth}
    n \not\equiv -1 \pmod{p}
\end{equation}
Then there exist homogeneous polynomials $F$ and $G$, possibly 0, such that
\begin{equation*}
\eta = F \lambda + dG,
\end{equation*}
where $dG$ is the $1$-form $dG = \frac{\partial G}{\partial x}dx + \frac{\partial G}{\partial y} dy$.
Writing $\eta = \eta_1 dx + \eta_2 dy$, explicit formulas for $F$ and $G$ are
\begin{align*}
    F &= \frac{1}{n + 1}\left( \dd{\eta_1}{y} + \dd{\eta_2}{x} \right),\\
    G &= \frac{1}{n + 1}(x \eta_1 + y \eta_2).
\end{align*}

\item \label{dmcharp_formtopolysinv} Suppose $H$ is a subgroup of $\SL_2(k)$. If $\eta$ is a relative invariant for $H$ with character $\chi$, then the above $F$ and $G$ may further be chosen to be relative invariants of $H$ for character $\chi$.

\item \label{dmcharp_polystoform} Suppose $H$ is a subgroup of $\SL_2(k)$. If $F$ and $G$ are homogeneous invariant polynomials of $H$ with character $\chi$ such that $F \lambda + d G$ is homogeneous, then $F \lambda + dG$ is also a relative invariant for $\chi$.
\end{enumerate}
\end{prop}

\begin{rmk}
To show that the degree hypothesis \eqref{eq_form_degree_hypoth} is needed, consider the example $\eta = y^{p - 1} dx$. If we assume $\eta = F \lambda + d G$ for some $F, G$, then we get the equations
\begin{align*}
    y^{p - 1} &= yF + \dd{G}{x},\\
    0 &= -xF + \dd{G}{y}.
\end{align*}
An appropriate linear combination of the above equations gives
\begin{equation*}
    x y^{p - 1} = x \dd{G}{x} + y \dd{G}{y} = (\deg G) G = 0,
\end{equation*}
which is false.

The restriction on degree makes this proposition more subtle than its characteristic 0 counterpart. But in our application (Theorem \ref{thm_dmcharp}), the degree hypothesis is automatically satisfied in the $p$-irregular case. Thus, Proposition \ref{dmcharp} is a rare example of modular invariant theory being \emph{less} complicated than nonmodular invariant theory.
\end{rmk}

\begin{rmk}
There are creative ways of evading the degree hypothesis \eqref{eq_form_degree_hypoth}. For instance, say $p > 2$ and $\eta$ is a relative invariant for $H$ with character $\chi$ with degree $n$, where
$$n \equiv -1 \pmod{p}.$$
There is an absolutely invariant homogeneous polynomial of $\GL_2(\FF_q)$ of degree $q^2 - 1$, which we denote $u$ (see, for instance, Smith \cite[Chapter 8]{smith}). Then $u \eta$ is a relative invariant for $H$ with character $\chi$ with degree $-2 \mod p$. Thus $\eta$ can be written in the form $(F\lambda + dG)/u$, where $F$ and $G$ are in degrees $n + q^2 - 2$ and $n + q^2$, respectively. Thus, the structure of the module of relative invariants still affects the existence of rational maps in these degrees.
\end{rmk}

Before we embark on the proof, we first need a version of the Poincar\'e Lemma of exterior algebra that is appropriate for fields of characteristic $p$.

\begin{lem} \label{poincare} Say $\eta$ is a homogeneous, closed 1-form on a 2-dimensional vector space over a field of characteristic $p$. Suppose also that $\eta$ has degree $n$ such that
$$n \not\equiv -1 \pmod{p}.$$
Then $\eta$ is exact.
\end{lem}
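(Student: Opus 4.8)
The plan is to write $\eta = A\,dx + B\,dy$ with $A, B$ homogeneous of degree $n$, and to produce explicitly a homogeneous polynomial $G$ of degree $n+1$ with $dG = \eta$. The natural candidate is the Euler-type formula
\begin{equation*}
    G = \frac{1}{n+1}\left( xA + yB \right),
\end{equation*}
which makes sense precisely because $n + 1 \not\equiv 0 \pmod p$, i.e. the hypothesis $n \not\equiv -1$ guarantees $n+1$ is invertible in $k$. So the first step is to verify that this $G$ works, i.e. that $\partial G/\partial x = A$ and $\partial G/\partial y = B$.

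The closedness hypothesis is exactly what makes this go through. Saying $\eta$ is closed, $d\eta = 0$, means $\partial A/\partial y = \partial B/\partial x$. Computing $\partial G/\partial x$ from the formula above gives
\begin{equation*}
    \frac{\partial G}{\partial x} = \frac{1}{n+1}\left( A + x\frac{\partial A}{\partial x} + y\frac{\partial B}{\partial x} \right)
    = \frac{1}{n+1}\left( A + x\frac{\partial A}{\partial x} + y\frac{\partial A}{\partial y} \right),
\end{equation*}
where in the last step I substituted $\partial B/\partial x = \partial A/\partial y$ using closedness. Now $A$ is homogeneous of degree $n$, so the Euler identity $x\,\partial A/\partial x + y\,\partial A/\partial y = nA$ holds (and the Euler identity is a polynomial identity valid in any characteristic), giving $\partial G/\partial x = \frac{1}{n+1}(A + nA) = A$. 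The symmetric computation, using $\partial A/\partial y = \partial B/\partial x$ the other way and Euler's identity for $B$, gives $\partial G/\partial y = B$. Hence $dG = \eta$, so $\eta$ is exact, with $G$ homogeneous of degree $n+1$ as required.

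The only subtlety — the place where the degree hypothesis is genuinely used — is the invertibility of $n+1$; without it the formula for $G$ is undefined, and indeed the remark following Proposition \ref{dmcharp} shows the conclusion can fail (e.g. $\eta = y^{p-1}dx$ in degree $n = p-1 \equiv -1$). I do not expect any serious obstacle here: the proof is just Euler's identity plus closedness, and the characteristic-$p$ issue is entirely localized to dividing by $n+1$. One should simply double-check the edge case $\eta = 0$ (then take $G = 0$, consistent with the convention that $0$ has every degree), and note that the Euler identity and the product rule used are formal polynomial identities, so no characteristic-zero reasoning sneaks in.
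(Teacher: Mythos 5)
Your proposal is correct and is essentially the paper's own proof: the paper takes the same Euler-type antiderivative $\frac{1}{n+1}(x\eta_1 + y\eta_2)$ and verifies $d$ of it equals $(n+1)\eta$ via closedness and Euler's identity, which is exactly your computation organized through the exterior derivative rather than the two partial derivatives separately. (Incidentally, your final formula with the $\frac{1}{n+1}$ factor is the correct one; the paper's displayed formula has a typo writing $\frac{1}{n}$.)
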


\begin{proof}
Express $\eta$ in a basis as $\eta_1 dx + \eta_2 dy$. Since $\eta$ is closed, we may obtain from the equation $d \eta = 0$ that
$$ \dd{\eta_1}{y} = \dd{\eta_2}{x}.$$
Then we compute explicitly
\begin{align*}
    d(x \eta_1 + y \eta_2) &= \eta_1 dx + x d \eta_1 + \eta_2 dy + y d \eta_2 \\
    &= \eta + xd \eta_1 + yd  \eta_2 \\
    &= \eta + x \frac{\partial \eta_1}{\partial x} dx + x \frac{\partial \eta_1}{\partial y} dy + y \frac{\partial \eta_2}{\partial x} dx + y \frac{\partial \eta_2}{\partial y} dy \\
    &= \eta + x \frac{\partial \eta_1}{\partial x} dx + y \frac{\partial \eta_1}{\partial y} dx + x \frac{\partial \eta_2}{\partial x} dy + y \frac{\partial \eta_2}{\partial y} dy & \text{(using closedness)} \\
    &= \eta + n \eta = (n + 1) \eta.& \text{(using homogeneity)}.
\end{align*}
By assumption we may divide by $n + 1$, so we have the explicit formula
\begin{equation} \label{eq_poincare}
    \eta = d \left( \frac{1}{n+1}(x \eta_1 + y \eta_2) \right).
\end{equation}
\end{proof}

We are ready to prove Proposition \ref{dmcharp}.

\begin{proof}[Proof of Proposition \ref{dmcharp}]
Throughout, set $\omega = dx \wedge dy$. Notice that $\omega$ is absolutely invariant with respect to $\SL_2(k)$.
\begin{enumerate}
\item One may just check that the given formulas for $F$ and $G$ suffice. We now explain how to derive the formulas. First, we show that there is a homogeneous polynomial $F$ of degree $n - 1$ such that $d \eta = d (F \lambda)$. We have
$$d \eta = \left( -\dd{\eta_1}{y} + \dd{\eta_2}{x} \right) \omega.$$
For convenience, let
$$j = \left( -\dd{\eta_1}{y} + \dd{\eta_2}{x} \right).$$
Thus
$$d \eta = j \omega.$$
For any homogeneous polynomial $F$, we have by the Leibniz rule that
\begin{align*}
    d (F \lambda) &= (dF)\lambda + F(d\lambda) \\
    &= \left(\dd{F}{x} dx + \dd{F}{y} dy \right)(-ydx + xdy) + F \omega \\
    &= \left( \dd{F}{x} x + \dd{F}{y} y \right) \omega + 2 F \omega \\
    &= (2 + \deg F)F \omega.
\end{align*}
Thus, the desired $F$ must satisfy
$$F = \frac{j}{2 + \deg F},$$
so we see that $F$ must be of degree $n - 1$ and we take
$$F = \frac{j}{n+1}.$$
With this choice of $F$, we know that the 1-form $\eta - F\lambda$ is closed, hence exact by Lemma \ref{poincare}. Thus, there exists a 0-form $G$ such that $dG = \eta - F\lambda$. Applying \eqref{eq_poincare}, we obtain the stated formula for $G$.

\item Let $h \in H$. Let $j$ be as in the proof of (1). We compute the pullback $h^\ast (d \eta)$ two ways. On one hand,
\begin{align*}
    h^\ast (d \eta) &= h^\ast( j \omega) & \\
    &=
    (h^\ast j) (h^\ast \omega) & (h^* \text{ respects multiplication}) \\
    &= (h^\ast j) \omega. & (\omega \text{ is absolutely invariant).}\\
\end{align*}
On the other hand,
\begin{align*}
    h^\ast (d \eta) &= d(h^\ast \eta) & (d \text{ and } h^\ast \text{ commute} ) \\
    &= d(\chi(h) \eta) & (\eta \text{ is relatively invariant}) \\
    &= \chi(h) d\eta & \\
    &= \chi(h) j \omega.\\
\end{align*}
Thus $h^\ast j = \chi(h) j$, so $j$ is relatively invariant. Since $F = j/(n+1)$, we conclude that $F$ is relatively invariant.

Now we show that $G$ is relatively invariant, that is, that $\chi(\gamma) G = \gamma^{\ast} G$.

Since $\eta$ and $F$ are relatively invariant, and $\lambda$ is absolutely invariant, we have
\begin{align*}
    h^\ast (dG) &= h^\ast(\eta - F \lambda) \\
    &= h^\ast \eta - (h^\ast F)(h^\ast \lambda) \\
    &= \chi(h) \eta - \chi(h) F \lambda \\
    &= \chi(h)(dG).
\end{align*}
So $dG$ is relatively invariant. This implies that $\chi(h) G - h^\ast G$ is a homogeneous closed 0-form of degree $n + 1$. The only nonzero closed 0-forms are elements of the polynomial ring $k[x^p,y^p]$.
By the assumption that $n \not\equiv - 1$ mod $p$, we may conclude that $\chi(h) G - h^\ast G = 0$, so $G$ is relatively invariant.

\item Let $h \in H$. We compute
\begin{align*}
    h^{\ast}(F \lambda + dG) &= (h^{\ast} F)( h^{\ast} \lambda) + h^{\ast} (dG) \\
    &= \chi(h) F h^{\ast} \lambda + h^{\ast} (dG) \\
    &=  \chi(h) F \lambda + h^{\ast} (dG) \\
    &= \chi(h) F \lambda + d(h^{\ast}G) \\
    &= \chi(h) F \lambda + d(\chi(h)G) \\
&= \chi(h)( F \lambda + dG).
\end{align*}
\end{enumerate}
\end{proof}

\subsection{Proofs} \label{sect_dm_proofs}
    We conclude this section by proving Theorem \ref{thm_dmcharp}, Theorem \ref{thm_pslminimal}, and Theorem \ref{thm_a5_minimal}.

\begin{proof}[Proof of Theorem \ref{thm_dmcharp}]
\hfill
\begin{enumerate}
    \item Let $f$ be a map as described in the theorem statement. Choose any lift $\Phi$ of $f$. Then $\deg(\omega_{\Phi}) = \deg(f)$. By equation \eqref{degtest}, we know $\deg(\omega_{\Phi}) \equiv -1, 0, 1 \mod p$. Since $p > 2$ by assumption, the form $\omega_{\Phi}$ meets the degree hypothesis of Proposition \ref{dmcharp} (\ref{dmcharp_formtopolys}). Since $\omega_{\Phi}$ is the form associated to a rational map via Proposition \ref{rationalmapstoforms}, it is relatively invariant for $\hat{\Gamma}$ with respect to some character, so the invariance hypothesis of Proposition \ref{dmcharp} (\ref{dmcharp_formtopolysinv}) is also met. To meet the hypothesis that $\hat{\Gamma}$ is a subgroup of $\SL_2(k)$, we take $k$ to be $\FF_{q^2}$ and view $\Gamma$ as a subgroup of $\PSL_2(\FF_{q^2})$. Thus, we can write $\omega_{\Phi} = F \lambda + d G$ for relative invariant homogeneous polynomials $F, G$ for the same character. Then, again by Proposition \ref{rationalmapstoforms}, we have
    \begin{equation*}
        f = r(\omega_{\Phi}) = r(F \lambda + dG).
    \end{equation*}
    The theorem statement is just this equation written in coordinates.

    \item Let $\omega = F \lambda + dG$. The conditions on $F$ and $G$ ensure that $\omega$ is homogeneous and nonzero. By Proposition \ref{dmcharp} \eqref{dmcharp_polystoform}, $\omega$ is relatively invariant for $\Gamma$. Then $r(\omega)$ has the claimed automorphisms, by the discussion immediately preceding Proposition \ref{rationalmapstoforms}.
\end{enumerate}
\end{proof}

\begin{proof}[Proof of Theorem \ref{thm_pslminimal}]
    Assume that $f$ has automorphism group $\PSL_2(\FF_q)$. Write $d = \deg(f)$. Let $\omega$ be a 1-form associated to $f$ via Proposition \ref{rationalmapstoforms}. By the proof of Proposition \ref{dmcharp}, there exist relatively invariant homogeneous polynomials $F, G$ of $\SL_2(\FF_q)$ such that $\omega = F \lambda + d G$. We also know $\deg F + 1 = \deg G - 1 = d$ (or $F=0$). Surely $G \neq 0$ because otherwise there would be a homogeneous factor, causing $f$ to be degree 1, which we reject.

For $q > 2$, the only character of $\SL_2(\FF_q)$ is the trivial character. To see this, we invoke a well-known fact from group theory (see Dickson \cite{Dickson}): the abelianization of $\SL_2(\FF_q)$ is trivial as long as $q \geq 4$. Every character factors through the abelianization, so every character is trivial. For $q=3$, the group $\PSL_2(\FF_3)$ is isomorphic to the alternating group $A_4$. There are only two $3$-regular conjugacy classes in $A_4$, so there are two modular characters. These are the trivial character and a degree $3$ character (the reduction of the ordinary degree $3$ character). Since we are only interested in linear characters for invariants, we need only consider the trivial character in the $q=3$ case.

Now we ask for which values of $d$ there exist homogeneous invariant polynomials in degrees $d - 1$ and $d + 1$. We cite a standard theorem in modular invariant theory, see Smith \cite[Theorem 8.1.8]{smith}: the ring of invariants $\FF_q[x,y]^{\SL_2(\FF_q)}$ is generated as an $\FF_q$-algebra by the fundamental invariants
$$u_1 = x^q y - x y^q$$
and
$$u_2 = \sum_{n = 0}^q x^{(q-1)(q-n)}y^{(q-1)n} = \frac{x^{q^2}y - x y^{q^2}}{x^q y - x y^q}.$$

The set of degrees of nontrivial polynomial invariants is, thus, the numerical semigroup generated by $q + 1$ and $q(q-1)$. It is also known that $u_1$ and $u_2$ are algebraically independent; that is, the ring of invariants above is actually a polynomial ring. So we can write $F$ and $G$ as polynomials in $u_1$ and $u_2$, in a unique way.

Next, we show that certain simple families of $F$ and $G$ give rise to 1-forms which are relatively invariant for a character of $\GL_2(\FF_q)$. By Proposition \ref{rationalmapstoforms}, such 1-forms give rise to rational maps with automorphism group $\PGL_2(\FF_q)$. Therefore, the only way to get a map with exact automorphism group $\PSL_2(\FF_q)$ is to avoid these families.

The determinant, $\det$, is a character of $\GL_2(\FF_q)$. The polynomial $u_1$ and the $1$-form $\lambda$ are, by direct calculation, relative invariants for $\det$. The polynomial $u_2$ is an absolute invariant of $\GL_2(\FF_q)$. This causes many simple expressions of the form $F \lambda + d G$ to be relative invariants for some power of $\det$.

Each pair of $F$ and $G$ falls into at least one of the following cases:

\begin{enumerate}
\item $F = 0$.
\item $F \neq 0$ and $F$ and $G$ are monomials in $u_1, u_2$.
\item At least one of $F$ and $G$ is not a monomial in $u_1, u_2$.
\end{enumerate}

Now we see which elements of these cases are admissible, in the sense that $F \lambda + d G$ is not a relative invariant for any power of $\det$.

\begin{enumerate}
\item Say $F = 0$. If $G$ is a pure polynomial in $u_1$, it is of the form ${c {u_1}^k}$, so it is a relative invariant of $\GL_2(\FF_q)$ for $\det^k$. If $G$ is a pure polynomial in $u_2$, it is an absolute invariant of $\GL_2(\FF_q)$. So $G$ must contain a binomial, which reduces us to the last case.
\item Write $F = \alpha {u_1}^{a_1} {u_2}^{a_2}, G = \beta {u_1}^{b_1} {u_2}^{b_2}$, where $\alpha, \beta \in \FF_q^{\ast}$. Then $F \lambda$ is relatively invariant for $\det^{a_1 + 1}$ and $G$ is relatively invariant for $\det^{b_1}$. The sum of relative invariants for the same character is again a relative invariant, so $\det^{a_1 + 1} \neq \det^{b_2}$. Since $\det^{q-1}$ is trivial by cyclicity of $k^{\ast}$, we conclude
\begin{equation*}
    a_1 + 1 \not\equiv b_1 \mod (q-1).
\end{equation*}
This property is preserved by multiplying or factoring out a monomial simultaneously from $F$ and $G$. Thus, we reduce to one of the following cases: $F = {u_1}^{a_1}$ and $G = {u_2}^{b_2}$, or $F = {u_2}^{a_2}$ and $G = {u_1}^{b_1}$.

In the first case, $a_1$ and $b_2$ are positive solutions to
\begin{equation*}
    a_1 (q + 1) + 2 = b_2 (q^2 - q).
\end{equation*}

Finding minimal solutions for such equations is a basic Diophantine problem. Reducing modulo $q(q-1)/2$, we find $a_1 \equiv q-2$. We earlier found that $a_2 + 1 \not\equiv b_1 \mod (q - 1)$, and $b_1 = 0$, so we cannot have $a_1 = q - 2$. Looking at the next positive solution for $a_1$ gives
\begin{equation*}
    a_1 \geq \frac{1}{2} q(q-1) + q - 2.
\end{equation*}
Then the degree of $f$ in this case is at least $\left( \frac{1}{2}q(q-1) + q - 2\right)(q + 1) + 1$.

In the second case, $a_2$ and $b_1$ are positive solutions to
\begin{equation*}
    a_2 (q^2 - q) + 2 = b_1 (q + 1).
\end{equation*}

The minimal solution occurs when
\begin{equation*}
   b_1 \geq \frac{1}{2}(q^2 - q) - q + 2.
\end{equation*}

Then
\begin{equation*}
    \deg(f) \geq \frac{1}{2}(q^3 - 2q^2 + q + 2).
\end{equation*}

\item The lowest degree homogeneous polynomial in $u_1$, $u_2$ that is not a monomial occurs in degree
\begin{equation*}
    \lcm(\deg(u_1), \deg(u_2)) = \frac{1}{2}q(q-1)(q+1).
\end{equation*}
Thus, if $F$ or $G$ contains a binomial, $\deg(f) \geq \frac{1}{2}q(q-1)(q+1) - 1$.
\end{enumerate}
Recalling that $q \geq 3$, the bound
\begin{equation*}
    \deg(f) \geq \frac{1}{2}(q^3 - 2q^2 + q + 2)
\end{equation*}
holds across all the cases.
\end{proof}

\begin{proof}[Proof of Theorem \ref{thm_a5_minimal}]
Assume that $f$ has automorphism group $A_5 \subseteq \PGL_2(\bar{\FF}_3)$. We claim that $\deg f \geq 49$. Let $\zeta$ be a primitive fifth root of unity in $\bar{\FF}_3$. By a conjugacy, we may assume that the subgroup $A_5$ is generated by the matrices
\begin{equation*}
\begin{bmatrix}
\zeta & 0 \\
0 & 1
\end{bmatrix} \quad \text{and} \quad
\begin{bmatrix}
1 & 1 - \zeta - \zeta^{-1} \\
1 & -1
\end{bmatrix}.
\end{equation*}
Let $\hat{A}_5$ be the inverse image of $A_5$ in $\SL_2(\bar{\FF}_3)$. We note that the only character of $\hat{A}_5$ is the trivial character. To see this, notice that $\hat{A}_5$ is isomorphic to $\SL_2(\bar{\FF_5})$, so the abelianization of $\hat{A}_5$ is trivial. By the lack of nontrivial characters and Theorem \ref{thm_dmcharp}, there exist homogeneous polynomials $F, G$ over $\bar{\FF}_3$ that are absolutely invariant for $\hat{A}_5$, such that either $F = 0$ or $\deg (F) + 1 = \deg(G) - 1 = \deg f$, and
$$f = \left[ xF + \dd{G}{y} : yF - \dd{G}{x} \right]. $$
In Magma \cite{magma}, we compute the fundamental invariants of $\hat{A}_5$. Let $i \in \bar{\FF}_3$ satisfy $i^2 + 1 = 0$; then the fundamental invariants are
    \begin{align*}
                u_1 &= x^{10} + i y^{10},\\
                u_2 &= x^{11}y + (i+2) x^6 y^6 - i xy^{11}.
    \end{align*}
We now rule out some low-degree possibilities for $F$ and $G$. We let $c_1, c_2$ denote arbitrary nonzero constants in $\bar{\FF}_3$.
\begin{itemize}
    \item If $F = 0$ and $G = c_1 u_1$, then by direct computation, the map $f$ has extra automorphisms.
    \item If $F = 0$ and $G = c_2 u_2$, then there is a common factor in the formula for $f$, so $\deg f < \deg(G) - 1$.
    \item If $F = c_1 u_1$ and $G = c_2 u_2$, then there is a common factor in the formula for $f$.
    \item If $F = 0$ and $G = c_1 u_1^2$, then there is a common factor in the formula for $f$.
\end{itemize}
The above cases rule out all the possibilities for $G$ such that $\deg G \leq 21$, proving the theorem.
\end{proof}


\section{Moduli space $\mathcal{M}_2$ and its symmetry locus} \label{sect_Ad}
    We are interested in determining the automorphism locus $\cA_2(\Fpbar) \subset \mathcal{M}_2(\Fpbar)$. It is known that $\mathcal{M}_2\cong \mathbb{A}^2$ via the explicit isomorphism $f \mapsto (\sigma_1,\sigma_2)$, where $\sigma_1$ and $\sigma_2$ are the first two elementary symmetric polynomials evaluated on the multipliers of the fixed points \cite{Silverman}. Any automorphism must permute the fixed points of a map, and can only permute fixed points with the same multipliers because multipliers are invariant under conjugation. Utilizing this fact, in characteristic 0 the locus $\cA_2 \subset \M_2(\C)$ is worked out in detail in \cite{Fujimura} but is also discussed in \cite{Milnor}. The starting point is that the discriminant of the multiplier polynomial
    \begin{equation}\label{eq3}
        x^3 - \sigma_1x^2 + \sigma_2x - (\sigma_1-2),
    \end{equation}
    where $\sigma_1$, $\sigma_2$, and $\sigma_3$ are the elementary symmetric polynomials evaluated at the multipliers of the three fixed points. For there to be a non-trivial automorphism, there must be two distinct fixed points with the same multiplier, so the multiplier polynomial
    vanishes if there is a nontrivial automorphism. The two components of the curve defined by the vanishing of \eqref{eq3} are then analyzed, only one of which corresponds to the existence of a nontrivial automorphism. This provides a description of $\cA_2\subset \mathcal{M}_2(\C)$ as a cuspidal cubic where every map has automorphism group $C_2$, except at the cusp, where it is $S_3$. In particular, in characteristic $0$, the locus $\cA_2$ is Zariski closed and irreducible. We proceed similarly in characteristic $p>0$ to arrive at Theorem \ref{thm_Ad}, which shows starkly different geometry in the $p=2$ case.

    As an element of $\PGL_2(\Fpbar)$, an automorphism is completely determined by specifying the images of three points. It follows that if a map has three distinct fixed point multipliers, the three fixed points are fixed by any automorphism, and the map has no nontrivial automorphisms. We first show that every map with two distinct fixed points with the same multiplier has a nontrivial automorphism. We use several times the basic fact that a fixed point has multiplicity one if and only if its multiplier is not one. Since the fixed points are the zeros of
    \begin{equation*}
        f(x) -x,
    \end{equation*}
    they are simple roots (multiplicity one) if and only if the derivative
    \begin{equation*}
        f'(x)-1
    \end{equation*}
    does not also vanish.
    \begin{lem}\label{lem_john}
        Let $f\in \Rat_2(\Fpbar)$. If $f$ has two distinct fixed points with the same multiplier, then there exists an automorphism which maps the two fixed points to each other and fixes the third.
    \end{lem}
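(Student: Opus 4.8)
The plan is to reduce to a normalized configuration and then exhibit the automorphism explicitly. First I would use the hypothesis to move the two relevant fixed points to $0$ and $\infty$: calling them $P_1\neq P_2$ with common multiplier $\mu$, pick $\sigma\in\PGL_2(\Fpbar)$ with $\sigma(0)=P_1$ and $\sigma(\infty)=P_2$, and replace $f$ by $g:=f^{\sigma}$, so that $0$ and $\infty$ are now fixed points of $g$ sharing the multiplier $\mu$. Because $\Aut(f)\cong\Aut(f^{\sigma})$ via $\alpha\mapsto\alpha^{\sigma}$ (as recalled in the introduction), any automorphism of $g$ that interchanges $0$ and $\infty$ and fixes the third fixed point of $g$ conjugates back to an automorphism of $f$ with exactly the properties asserted in the lemma.

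Next I would write $g$ in coordinates. A degree-$2$ map fixing $0$ and $\infty$ can be written, after scaling the numerator, as $g(z)=\frac{z^{2}+bz}{cz+d}$ with $d\neq 0$. A short derivative computation gives multiplier $b/d$ at $0$ and multiplier $c$ at $\infty$, and equating them forces $b=cd$, so $g(z)=\frac{z(z+cd)}{cz+d}$. Solving $g(z)=z$ then shows that the only fixed point besides $0$ and $\infty$ is $z=d$ — here one uses $c\neq 1$, which holds because $c=1$ would collapse $g$ to the identity, contradicting $\deg g=2$. Thus $g$ has the three distinct simple fixed points $0,\infty,d$. Now set $\beta(z)=d^{2}/z$, an involution in $\PGL_2(\Fpbar)$ interchanging $0$ and $\infty$ and fixing $d$. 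The heart of the argument is the identity $g^{\beta}=g$, which I would verify by substitution: one computes $g(d^{2}/z)=\frac{d^{2}(d+cz)}{z(cd+z)}$, whence $\beta\circ g\circ\beta(z)=d^{2}/g(d^{2}/z)=\frac{z(z+cd)}{cz+d}=g(z)$. Conjugating $\beta$ back by $\sigma$ gives an element of $\Aut(f)$ that swaps $P_1=\sigma(0)$ with $P_2=\sigma(\infty)$ and fixes the third fixed point $P_3=\sigma(d)$, as required.

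I expect the main obstacle to be bookkeeping rather than anything deep: one must be sure no degeneracy is lost in the normalization — that moving $P_1,P_2$ to $0,\infty$ is legitimate, that the parametrization $g(z)=\frac{z^{2}+bz}{cz+d}$ is exhaustive (checking that the excluded cases, namely $d=0$ or, after equating multipliers, $c=\pm1$, genuinely force $\deg g\leq 1$), and that the ``third fixed point'' really is distinct from the other two. The algebra in $g^{\beta}=g$ is brief but should be written out with care. If one prefers to avoid the explicit substitution, a more conceptual variant is available: $g$ and $g^{\beta}$ have the same three distinct fixed points with the same multipliers, and a degree-$2$ map with three distinct fixed points is determined by that data (essentially Milnor's normal-form analysis of $\M_2$), so $g^{\beta}=g$ follows; either route yields the lemma.
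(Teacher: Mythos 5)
Your proof is correct and follows essentially the same route as the paper: both arguments normalize the two equal-multiplier fixed points to $0$ and $\infty$, arrive at the normal form $z(z+\lambda)/(\lambda z+1)$ (your $g(z)=z(z+cd)/(cz+d)$ rescales to this via $z\mapsto dz$), and exhibit an inversion swapping $0$ and $\infty$ as the automorphism. The only difference is that the paper invokes Silverman's Normal Forms Lemma, which forces it to first verify $\lambda_1\lambda_2\neq 1$ in positive characteristic via formal identities among the multipliers, whereas your self-contained parametrization makes that preliminary check unnecessary.
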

    \begin{proof}
        Let $f \in \Rat_2(\Fpbar)$ be a rational map which has two fixed points with the same multiplier $\lambda$. Note that $\lambda \neq 1$, since otherwise each fixed point has multiplicity at least $2$, and there can only be three fixed points for a degree $2$ map when counted with multiplicity. Label the multipliers of the three (with multiplicity) fixed points as $\lambda_1$, $\lambda_2$, and $\lambda_3$. Recall $\lambda_1\lambda_2 = 1 \iff \lambda_1=\lambda_2=1$ even in positive characteristic since the relation $\sigma_1 = \sigma_3+2$ implies the (formal) identities
        \begin{equation*}
            (\lambda_1-1)^2 = (\lambda_1\lambda_2-1)(\lambda_1\lambda_3-1) \quad \text{and} \quad
            (\lambda_2-1)^2 = (\lambda_2\lambda_1-1)(\lambda_2\lambda_3-1).
        \end{equation*}
        So we are in the case that $\lambda_1\lambda_2\neq 1$ and, by the Normal Forms Lemma \cite[Lemma~5.3]{Silvermana}, the map $f$ must be conjugate to a map of the form
        \begin{equation*}
            \phi(z) = \frac{z^2 + \lambda z}{\lambda z + 1}.
        \end{equation*}
        Then, conjugation by $z \mapsto \frac{1}{z}$ is an automorphism that permutes the fixed points $0$ and $\infty$.
    \end{proof}

    \subsection{Automorphism locus over $\FF_p$, for $p\neq 2,3$}
        In this case, we can follow Fujimura-Nishizawa \cite[Proposition~1]{Fujimura}, since no coefficients that arise have prime divisors other than $2$ and $3$. For a map corresponding to the point $(\sigma_1,\sigma_2)$ to have a nontrivial automorphism, at least two multipliers must be equal. The multipliers are the roots of the polynomial
        \begin{equation}\label{mult}
            x^3 - \sigma_1x^2 + \sigma_2x - \sigma_1 + 2,
        \end{equation}
        which has multiple roots if and only if its discriminant is $0$. Therefore, there are at least two equal multipliers exactly at the vanishing of its discriminant, which is
        \begin{equation}\label{disc}
            (\sigma_2 - 2\sigma_1 + 3)(2\sigma_1^3 + \sigma_1^2\sigma_2 - \sigma_1^2 - 4\sigma_2^2 - 8\sigma_1\sigma_2 + 12\sigma_1 + 12\sigma_2-36).
        \end{equation}

        Note that this equivalence holds over any field. The polynomial \eqref{disc} is presented with two factors. The zero locus of the first, $\sigma_2 - 2\sigma_1+3$, is exactly the set of points corresponding to maps with a fixed point of multiplier $1$. This is because a fixed point multiplier $\lambda$ is a root of $\eqref{mult}$; substituting $1$ for $x$ yields $\sigma_2 - 2\sigma_1+3$. Following Milnor \cite{Milnor}, we call the vanishing locus of this polynomial $\Per_1(1)$, since the locus is the set of all conjugacy classes that have a fixed point with multiplier of $1$.

        We claim that the second curve, a cuspidal cubic denoted $S$, is the automorphism locus of quadratic rational maps over $\FF_p$ for $p > 3$.

        We use the fact that a multiplier of a fixed point is equal to $1$ if and only if it the fixed point occurs with multiplicity greater than $1$. The two curves have a unique point of intersection at $(\sigma_1,\sigma_2) = (3,3)$, which corresponds to a triple fixed point where $\lambda_1=\lambda_2=\lambda_3=1$. All other points on $\Per_1(1)$ correspond to maps with a double fixed point and a single fixed point. Again by the Normal Forms Lemma \cite[Lemma~5.3]{Silvermana}, maps with $\lambda_1=\lambda_2=1$ are conjugate to a map of the form
        \begin{equation*}
            f(z) = z + \frac{1}{z} + \sqrt{1-\lambda_3},
        \end{equation*}
        which has a double fixed point at infinity and a single fixed point at $\frac{-1}{\sqrt{1-\lambda_3}}$. Infinity has preimages 0 and itself; we know that automorphisms permute the set of fixed points and permute their preimages. The only possible element of $\PGL_2$ which could be an automorphism, then, is the map $z \mapsto \frac{1}{z}$, which is not an automorphism of $f$.

        It follows that any map with a nontrivial automorphism must lie on $S$. Points with exactly two equal multipliers have $C_2$ as their automorphism group by Lemma \ref{lem_john}. Points with all three multipliers equal must have $\sigma_1=3\lambda$ and $\sigma_3 = \lambda^3$, so the multiplier must be a root of the polynomial
        \begin{equation}\label{3equal}
            x^3 - 3x + 2.
        \end{equation}
        This factors as $(x+2)(x-1)^2$, so there are only two points on $S$ with triple multipliers: $(\sigma_1,\sigma_2) \in \{(-6,12), (3,3)\}$. The point $(\sigma_1,\sigma_2) = (-6,12)$ has all three multipliers equal to $-2$, so by Lemma~\ref{lem_john} applied to each pair of fixed points, its automorphism group is $S_3$. The point $(\sigma_1,\sigma_2) = (3,3)$ corresponds to the map $f(z) = z+\frac{1}{z}$ \cite[Lemma~5.3]{Silvermana}, which has $z\mapsto -z$ as its only nontrivial automorphism.

        This completes the proof of Theorem \ref{thm_Ad} part \eqref{thm1_3}, except for the verification that the cubic is cuspidal. We defer this to Section \ref{sect_Ad_geo}.

    \subsection{Automorphism locus over $\bar{\FF}_2$} \label{subsection_autlocusf2}
        In $\bar{\FF}_2$, we still have the automorphism locus contained in $S\cup \Per_1(1)$, but equation \eqref{disc} reduces and we have the components
        \begin{align*}
            S &= V(\sigma_1^2\sigma_2-\sigma_1^2) = V(\sigma_1)\cup V(\sigma_2-1)\\
            \Per_1(1)&=V(\sigma_2-1).
        \end{align*}
        As before, the only point on $\Per_1(1)\setminus \{(0,1)\}$ that might have a non-trivial automorphism is the map with $\lambda_1=\lambda_2=\lambda_3=1$, which is $(\sigma_1,\sigma_2) = (1,1)$, or by the second part of the Normal Forms Lemma, $f(z)=z + \frac{1}{z}$. This has no nontrivial automorphisms over $\bar{\FF}_2$. Its unique fixed point is $\infty$, the other preimage of $\infty$ is $0$, and the unique preimage of $0$ in $\bar{\FF}_2$ is $1$. We would have expected $z\mapsto -z$ to be an automorphism, but it collapses to the identity map in characteristic $2$.

        Now we consider the intersection of the two components given by $\{(\sigma_1, \sigma_2) = (0,1)\}$. This map is conjugate to $f(z) = z^2+z$. Any possible automorphism of $f$ must fix the point at infinity, so must be of the form $\phi(z) = az+b$, where $a, b \in \bar{\FF}_2$ and $a \neq 0$. The equation
        $$f \circ \phi = \phi \circ f$$
        expands to
        \begin{align} \label{eq_gs_equations}
            z^2a^2 + (2b + 1)za + (b^2 + b) &= (z^2 + z)a + b.
        \end{align}
        Thus we have the following relations on $a$ and $b$:
        \begin{equation} \label{eq4}
            a^2 = a, \quad \text{and} \quad 2ab + a = a, \quad \text{and} \quad b^2 + b = b.
        \end{equation}
        Since $a$ cannot be zero, we have $a=1$ and $b = 0$, so the only automorphism is the identity.

        \begin{rmk}
        To recover a Zariski-closed automorphism locus, one can work instead with the automorphism group scheme \cite{FMV}. By definition, the automorphism group scheme of a rational map $f$ over $\bar{\FF}_2$ is a closed subgroup scheme of
        $$\PGL_2 = \Proj \bar{\FF}_2[a,b,c,d, (ad - bc)^{-1}]$$ determined by the ideal generated by the equation $f \circ \phi = \phi \circ f$, where $\phi \in \PGL_2$ is given by coordinates $a, b, c, d$. In the case of the map $f(z) = z^2 + z$, we can set $a = c = d = 1$ by the above reasoning about the fixed points, so the automorphism group scheme of $f$ is isomorphic to a closed subgroup scheme of $\bar{\FF}_2[b]$. With this identification, the group scheme structure on $\bar{\FF}_2[b]$ is just that of the additive group scheme $\mathbb{G}_{\text{a}}$, reflecting the fact that these elements of $\PGL_2$ are translations. By \eqref{eq4}, the relation determining the automorphism group scheme of $f$ is $b^2 = 0$, hence we obtain the automorphism group scheme
        \begin{equation*}
            \alpha_2 := \Spec \bar{\FF}_2[b]/(b^2).
        \end{equation*}
        This is a nontrivial closed subgroup scheme of $\mathbb{G}_{\text{a}}$, which is only possible in positive characteristic. The group scheme $\alpha_2$ has just one closed point, reflecting the fact that $f$ has only the identity automorphism, but the group scheme structure is nevertheless nontrivial. It is clear that there will be other instances in positive characteristic where the more general formulation of the automorphism group as a group scheme is needed in order to recover a Zariski closed automorphism locus.
        \end{rmk}

        It remains to investigate $S\setminus \Per_1(1) = V(\sigma_1)\setminus\{(0,1)\}$. Since this component is disjoint from $\Per_1(1)$, none of the multipliers are $1$, and so corresponding maps have three distinct fixed points, but they still has at least two equal multipliers. There is only a single point with a triple multiplier, since \eqref{3equal} reduces to $x(x - 1)^2$ and $\lambda=1$ is on $\Per_1(1)$. The point given by $\lambda=0$ again has $S_3$ as its automorphism group by Lemma \ref{lem_john}, and every other point on $V(\sigma_1)\setminus\{(0,1)\}$ has $C_2$ as its automorphism group.

        This completes the proof of Theorem \ref{thm_Ad} part \eqref{thm1_1}.

    \subsubsection{Normal form for $\mathcal{A}_2$}
        The symmetry locus in $\Rat_2$ traced out via the Normal Forms Lemma is also parameterized by the family $f_c(z) = z^2 + cz$ defined in the discussion after Theorem \ref{thm_Ad}.
        The Normal Forms Lemma sheds some light on what is happening in the family $f_c$. There are two finite fixed points with multiplier $c$, and $\infty$ is a fixed point of multiplier $0$. From this we can compute $\sigma_1=0$ and $\sigma_2=c^2$. These maps always have the order $2$ automorphism $z \mapsto z + c - 1$, which collapses to the identity when $c=1$ (giving $\alpha_2$). For a more geometric picture, the two finite fixed points are distinct, but they collapse onto each other when $c=1$.

    \subsection{Automorphism locus over $\bar{\FF}_3$}
        In $\bar{\FF}_3$, equation \eqref{disc} again reduces and we have
        \begin{align}
            S &= V(2\sigma_1^3 + \sigma_1^2\sigma_2 - \sigma_1^2 - \sigma_2^2 - 2\sigma_1\sigma_2) \label{f3locus}\\
            \Per_1(1) &= V(\sigma_2 - 2\sigma_1). \notag
        \end{align}

        Over $\bar{\FF}_3$, both $(\sigma_1,\sigma_2) = (3,3)$ and $(\sigma_1,\sigma_2) =(-6,12)$ (the triple-repeated multiplier maps) reduce to $(\sigma_1,\sigma_2) =(0,0)$, the unique intersection of the two curves. This is the only possibility for a map with all three multipliers equal, since equation \eqref{3equal} reduces to $x^3 - 1$, which factors as $(x - 1)^3$. Thus, there is no map with all three fixed points distinct and all three multipliers equal, so, by the same arguments as before, there is no map with automorphism group $S_3$.

        On the remainder of $S\setminus \Per_1(1)$, it is still true that all three fixed points are distinct and two multipliers are equal, so corresponding maps have automorphism group $C_2$. Thus the automorphism locus over $\bar{\FF}_3$ is a cuspidal cubic $S$ on which all maps have automorphism group $C_2$.

        This completes the proof of Theorem \ref{thm_Ad} part \eqref{thm1_2}, except for the verification that the cubic is cuspidal. We do this next.

    \subsection{Geometry of the Automorphism Locus} \label{sect_Ad_geo}
        For every prime $p\neq 2$, we have shown that the automorphism locus is given by a cubic. It is natural to ask if this cubic is cuspidal, as is the case in characteristic $0$, or if reduction modulo $p$ changes the geometry. We now prove the curve remains cuspidal.
        \begin{prop}\label{lem_Ad_geo}
            Let $p>2$. Then the automorphism locus $\mathcal{A}_2 \subset\mathcal{M}_2(\Fpbar)$ is a cuspidal cubic. In particular, it is irreducible. Furthermore, if $p>3$, then the cusp is the unique point with automorphism group $S_3$ and all other points have automorphism group $C_2$.
        \end{prop}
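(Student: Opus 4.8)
In the two preceding subsections we have already identified $\mathcal{A}_2(\Fpbar)$ with the affine plane cubic $S = V(g_p)$, where $g_3 = 2\sigma_1^3 + \sigma_1^2\sigma_2 - \sigma_1^2 - \sigma_2^2 - 2\sigma_1\sigma_2$ and, for $p>3$,
\begin{equation*}
    g_p = 2\sigma_1^3 + \sigma_1^2\sigma_2 - \sigma_1^2 - 4\sigma_2^2 - 8\sigma_1\sigma_2 + 12\sigma_1 + 12\sigma_2 - 36.
\end{equation*}
Since the cubic term $2\sigma_1^3$ is nonzero in every characteristic $p>2$, the locus $S$ is genuinely a plane cubic. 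It therefore remains to prove: (i) $S$ is irreducible; (ii) $S$ is singular and the tangent cone at its singular point is a double line; and, when $p>3$, (iii) that singular point is $(-6,12)$. Granting (i) and (ii), the classification of singular plane cubics over an algebraically closed field — an irreducible singular cubic is nodal or cuspidal according as the tangent cone at the singular point consists of two distinct lines or one double line — shows $S$ is a cuspidal cubic.

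For (i), the plan is to exhibit a rational parametrization of $S$. By Lemma \ref{lem_john} and the Normal Forms Lemma, every point of $S$ except the single point of $S\cap\Per_1(1)$ is represented by a map conjugate to $\phi_\lambda(z) = (z^2+\lambda z)/(\lambda z + 1)$ for some $\lambda\neq 1$, whose fixed-point multipliers are $\lambda,\lambda,\tfrac{2}{\lambda+1}$, giving
\begin{equation*}
    \sigma_1 = \frac{2\lambda^2 + 2\lambda + 2}{\lambda+1},\qquad \sigma_2 = \frac{\lambda^3 + \lambda^2 + 4\lambda}{\lambda+1}.
\end{equation*}
Thus $S$ minus at most one point is the image of an open subset of $\A^1$ under a morphism; since an irreducible curve minus finitely many points is still dense in it, every irreducible component of $S$ lies in the closure of this image, whence $S$ is irreducible. (Alternatively one verifies directly that $g_p$ has no linear factor.)

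For (ii) and (iii), I would solve the system $g_p = \partial_{\sigma_1}g_p = \partial_{\sigma_2}g_p = 0$. When $p>3$, eliminating $\sigma_2$ from the two derivative equations yields $\sigma_1(\sigma_1+6)^2 = 0$; the branch $\sigma_1 = 0$ forces $\sigma_2 = \tfrac32$, but $g_p(0,\tfrac32) = -27\neq 0$ since $p>3$, so this point is not on $S$, while $\sigma_1 = -6$ gives $(-6,12)\in S$. Hence the unique singular point is $(-6,12)$; the case $p=3$ has the same shape and gives the unique singular point $(0,0)$ (the common reduction of $(3,3)$ and $(-6,12)$). Translating that singular point to the origin, a short Taylor computation shows the degree-two part of $g_p$ equals, after translation, $-(5\sigma_1+2\sigma_2+6)^2$ when $p>3$ (which in characteristic $5$ collapses to $-(2\sigma_2-24)^2$, still a square) and $-(\sigma_1+\sigma_2)^2$ when $p=3$; in all cases this is the square of a linear form, so the tangent cone is a double line, and (i)+(ii) give that $S$ is a cuspidal cubic.

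For the "furthermore" statement with $p>3$: the analysis in the preceding subsection already showed that every point of $S$ has automorphism group $C_2$ except $(-6,12)$, whose three fixed-point multipliers all equal $-2$, so Lemma \ref{lem_john} applied to each pair gives automorphism group $S_3$ there. As we have just identified $(-6,12)$ with the cusp, the cusp is the unique point of $\mathcal{A}_2$ with automorphism group $S_3$ and every other point has automorphism group $C_2$. The main obstacle I anticipate is purely bookkeeping: making the tangent-cone computation watertight uniformly in $p>3$ — in particular checking the quadratic part remains a perfect square at the primes (such as $5$) where some of its coefficients vanish — and being careful that the rational parametrization surjects onto $S$ itself and not merely onto one component, which is why the count of points of $S\cap\Per_1(1)$ enters.
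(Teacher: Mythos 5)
Your proposal is correct, and the singular-point and tangent-cone computations coincide with the paper's: both solve $\partial_{\sigma_1}g_p=\partial_{\sigma_2}g_p=0$, locate the unique affine singularity at $(-6,12)$ (resp.\ $(0,0)$ for $p=3$), translate it to the origin, and observe that the quadratic part is $-(5\sigma_1'+2\sigma_2')^2$ (resp.\ $-(\sigma_1+\sigma_2)^2$), a double line. Where you genuinely diverge is irreducibility. The paper argues that a reducible cubic would have a line component passing through the singular point, hence appearing in the tangent cone and dividing $g_p$; since the unique tangent line $5\sigma_1'+2\sigma_2'$ does not divide $g_p$, the curve is irreducible (and cuspidal ``by definition'' from the double tangent line). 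You instead parametrize $S$ minus the single point of $S\cap\Per_1(1)$ by $\lambda\mapsto(\sigma_1(\lambda),\sigma_2(\lambda))$ via the normal form $\phi_\lambda$, so that every component of $S$ lies in the closure of the image of $\A^1$ and $S$ is irreducible. Your route is more dynamical and reuses machinery already on the table (Lemma \ref{lem_john} and the Normal Forms Lemma), but it needs the small supplementary remark you yourself flag --- that $g_p$ is not a power of a linear form, so that set-theoretic irreducibility upgrades to irreducibility of the cubic --- which is settled either by your parenthetical direct check for linear factors or by noting the parametrized image is not contained in a line. The paper's route is shorter but leans on the slightly delicate fact that a reducible cubic with an affine singular point must have a tangent line there dividing the defining polynomial. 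Two further points in your favor: your explicit elimination $\sigma_1(\sigma_1+6)^2=0$ together with the check $g_p(0,\tfrac32)=-27\neq 0$ makes the uniqueness of the singularity cleanly uniform in $p>3$ (the paper's displayed $S_{\sigma_2}$ contains a typo, which your computation implicitly corrects), and your verification that the tangent cone remains a nonzero square in characteristic $5$, where the coefficient of $\sigma_1'$ in the tangent line vanishes, addresses a case the paper passes over silently.
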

        \begin{proof}
            We first show that the automorphism locus has a unique singularity. If the locus were reducible, it would be the union of three lines or the union of a line and a degree $2$ curve. In either case, one of the tangent lines would divide the defining polynomial. So it suffices to show that the tangent lines do not divide the defining polynomial. If there is a single tangent line with multiplicity $2$, then the curve is cuspidal by definition.

            In the case where $p=3$, the automorphism locus is given by equation $\eqref{f3locus}$. The singularities are given by the common vanishing of the partial derivatives,
            \begin{align*}
                f_{\sigma_1}&=2\sigma_1\sigma_2-2\sigma_1-2\sigma_2\\
                f_{\sigma_2}&=\sigma_1^2-2\sigma_1-2\sigma_2,
            \end{align*}
            which is the single point $(\sigma_1,\sigma_2) = (0,0)$. The tangent lines at this singularity are given by the lowest-degree homogeneous component of equation $\eqref{f3locus}$, which is $-\sigma_1^2-\sigma_2^2-2\sigma_1\sigma_2=-(\sigma_1+\sigma_2)^2$. This is a double tangent line, and since $\sigma_1+\sigma_2$ does not divide $\eqref{f3locus}$, we are done.

            In the case where $p>3$, the automorphism locus is given by
            \begin{equation*}
                S = V(2\sigma_1^3 + \sigma_1^2\sigma_2 - \sigma_1^2 - 4\sigma_2^2 - 8\sigma_1\sigma_2 + 12\sigma_1 + 12\sigma_2-36).
            \end{equation*}
            The partial derivatives are
            \begin{align*}
                S_{\sigma_1}&=6\sigma_1^2+2\sigma_1\sigma_2-2\sigma_1-8\sigma_2+12,\\
                S_{\sigma_2}&=\sigma_1^2-8\sigma_2-8\sigma_1+12\sigma_2,
            \end{align*}
            and the only singularity is $(\sigma_1,\sigma_2) = (-6,12)$, which was shown above to have $S_3$ as its automorphism group. To compute the tangent lines, we need to first move the singularity to the origin with the translation $\sigma_1'=\sigma_1+6$ and $\sigma_2'=\sigma_2-12$, so then
            \begin{equation*}
                S' = V(2\sigma_1'^3+\sigma_1'^2\sigma_2'-25\sigma_1'^2-20\sigma_1'\sigma_2'-4\sigma_2'^2).
            \end{equation*}
            From this form we can see that the tangent lines are given by $$-25\sigma_1'^2-20\sigma_1'\sigma_2'-4\sigma_2'^2=-(5\sigma_1'+2\sigma_2')^2.$$ Once again, there is a double tangent line which does not divide the defining polynomial.
        \end{proof}

        This completes the proof of Theorem \ref{thm_Ad}.

\section*{Appendix}
In this Appendix, we summarize the classification of finite subgroups of $\PGL_2(k)$ up to conjugacy, where $k$ is an algebraically closed field of prime characteristic $p$. The results are essentially due to Dickson \cite{Dickson} and Beauville \cite{beauville}; for details and proofs, see Faber \cite{Faber}.

Let $B(k) \subset \PGL_2(k)$ be the \emph{Borel group}, that is, the group of affine transformations $z \mapsto \alpha z + \beta$, where $\alpha \in k^\times$ and $\beta \in k^+$. Each finite subgroup of $B(k)$ may be written as a semidirect product $\mu \rtimes \Lambda$, where $\mu$ is a finite subgroup of $k^\times$ and $\Lambda$ is a finite subgroup of $k^+$.

A finite group is called \emph{$p$-semi-elementary} if it has a unique $p$-Sylow subgroup of order $p$. Each subgroup of $B(k)$ is either $1$, a $p$-regular cyclic group of order at least $2$, or a $p$-semi-elementary group.
\begin{table}[h]
    \centering
    \begin{tabular}{c|c||c|c|c|c}
    Group               & ...where...          & $p = 2$        & $p = 3$          & $p = 5$          & $p \geq 7$ \\ \hline
    $\PGL_2(\FF_q)$     & $q$ is a power of $p$& $p$-irr            & $p$-irr             & $p$-irr           & $p$-irr        \\
    $\PSL_2(\FF_q)$     & $q$ is a power of $p$& ($p$-irr)          & $p$-irr            & $p$-irr            & $p$-irr          \\
    $1$                   &                     & $p$-reg             & $p$-reg              & $p$-reg             & $p$-reg        \\
    Cyclic $C_n$        & $n \geq 2$ and $(n,p) = 1$& $p$-reg             & $p$-reg             & $p$-reg              & $p$-reg          \\
    $p$-semi-elementary $\mu \rtimes \Lambda$
                        & $\mu \subseteq k^\times$ and $\Lambda \subseteq k^+$
                                                & $p$-irr$^{\ast}$             & $p$-irr$^{\ast}$               & $p$-irr$^{\ast}$              & $p$-irr$^{\ast}$            \\
    Dihedral $D_{2n}$      & $n = 2$               & $\emptyset$          & $p$-reg            & $p$-reg             & $p$-reg           \\
    Dihedral $D_{2n}$      & $n > 2$ and $(n,p) = 1$   & $p$-irr           & $p$-reg            & $p$-reg             & $p$-reg          \\
    Tetrahedral $A_4$      &                      & ($p$-irr)          & ($p$-irr)           & $p$-reg             & $p$-reg           \\
    Icosahedral $A_5$      &                      & ($p$-irr)          & $p$-irr             & ($p$-irr)           & $p$-reg            \\
    Octahedral $S_4$       &                      & $\emptyset$            & ($p$-irr)           & $p$-reg              & $p$-reg
    \end{tabular}
    \caption{Finite subgroups of $\PGL_2(k)$ up to conjugacy, where $k$ is an algebraically closed field of prime characteristic $p$.
    }
    \label{tab:classification}
\end{table}

Table \ref{tab:classification} should be read as follows.
\begin{itemize}
    \item The empyset symbol $\emptyset$ denotes that no finite group isomorphic to $\Gamma$ exists in $\PGL_2(k)$. For each entry that is not marked $\emptyset$, a finite group isomorphic to $\Gamma$ exists in $\PGL_2(k)$.
    \item The entry $p$-reg in row $\Gamma$ means that the group $\Gamma$ exists in $\PGL_2(k)$, is unique up to conjugacy, and $\Gamma$ is $p$-regular.
    \item The entry $p$-irr means that $\Gamma$ exists in $\PGL_2(k)$, is unique up to conjugacy, and $\Gamma$ is $p$-irregular.
    \item The entry $p$-irr$^{\ast}$ means that $\Gamma$ exists in $\PGL_2(k)$, possibly with multiple conjugacy classes, and $\Gamma$ is $p$-irregular.
    \item We mark some entries with parentheses ($p$-irr) to denote that, while the group $\Gamma$ exists in $\PGL_2(k)$, it is accounted for elsewhere in Table \ref{tab:classification} due to an accidental isomorphism. Thus, given a field $k$, each subgroup $\Gamma$ of $\PGL_2(k)$ belongs to exactly one row marked $p$-reg, $p$-irr, or $p$-irr$^{\ast}$. For the purposes of the realizability problem, it suffices to study just these cases.
\end{itemize}
The complete list of accidental isomorphisms for the entries marked ($p$-irr) is as follows.
\begin{itemize}
\item If $p = 2$ and $q$ is a power of $p$, then $\PGL_2(\FF_q) \cong \PSL_2(\FF_q)$.
\item If $p = 2$, then $A_4 \cong B(\FF_4)$ is $p$-semi-elementary.
\item If $p = 2$, then $A_5 \cong \PGL_2(\FF_4)$.
\item If $p = 3$, then $A_4 \cong \PSL_2(\FF_3)$.
\item If $p = 3$, then $S_4 \cong \PGL_2(\FF_3)$.
\item If $p = 5$, then $A_5 \cong \PSL_2(\FF_5)$.
\end{itemize}


For our applications, it is also useful to understand the possible containments between these subgroups of $\PGL_2(\bar{\FF}_p).$ By \cite[Remark 2.1]{Faber}, a finite subgroup $\Gamma \subset \PGL_2(\bar{\FF}_p)$ is $p$-semi-elementary if and only if it fixes a unique point in $\PP^1(\bar{\FF}_p).$
A subgroup is dihedral if and only if it stabilizes, but does not fix, a subset of $\PP^1(\bar{\FF}_p)$ of cardinality 2.
From these facts, and general group theory, we can deduce that for any strict inclusion of subgroups $\Gamma \subsetneq \Gamma' \subset \PGL_2(\bar{\FF}_p)$,
\begin{enumerate}
    \item If $\Gamma \cong A_4$, then $\Gamma'$ is isomorphic to $S_4, \PGL_2(\FF_q)$, or $\PSL_2(\FF_q)$ for some power $q$ of $p$.
    \item If $\Gamma \cong A_5$ or $\Gamma \cong S_4$, then $\Gamma'$ is isomorphic to $\PGL_2(\FF_q)$ or $\PSL_2(\FF_q)$ for some power $q$ of $p$.
    \item If $\Gamma \cong \PSL_2(\FF_q)$ for some power $q$ of $p$, then $\Gamma'$ is isomorphic to $\PGL_2(\FF_q')$ or $\PSL_2(\FF_q')$ for some power $q'$ of $p$.
\end{enumerate}


\begin{thebibliography}{10}

\bibitem{beauville}
Arnaud Beauville.
\newblock Finite subgroups of {$PGL_2(K)$}.
\newblock {\em Contemporary Mathematics}, 522:23--29, 2010.

\bibitem{Benson}
D.J. Benson.
\newblock {\em Polynomial invariants of finite groups}, volume 190 of {\em LMS
  Lecture Notes}.
\newblock Cambridge University Press, 1993.

\bibitem{magma}
Wieb Bosma, John Cannon, and Catherine Playoust.
\newblock The magma algebra system. {I}. {T}he user language.
\newblock {\em Journal of Symbolic Computation}, 24(3--4):235--265, 1997.

\bibitem{Hutz15}
Joao de~Faria and Benjamim Hutz.
\newblock Automorphism groups and invariant theory on {PN}.
\newblock {\em Journal of Algebra and its Applications}, 17(9), 2017.

\bibitem{Dickson}
Leonard~Eugene Dickson.
\newblock {\em Linear groups: With an exposition of the Galois field theory}.
\newblock Dover Publication, 1958.

\bibitem{DM}
Peter Doyle and Curt McMullen.
\newblock Solving the quintic by iteration.
\newblock {\em Acta Math.}, 163(3-4):151--180, 1989.

\bibitem{Faber}
Xander {Faber}.
\newblock {Finite p-Irregular Subgroups of PGL(2,k)}.
\newblock {\em arXiv e-prints}, page arXiv:1112.1999, Dec 2011.

\bibitem{FMV}
Xander Faber, Michelle Manes, and Bianca Viray.
\newblock Computing conjugating sets and automorphism groups of rational
  functions.
\newblock {\em J. Algebra}, 423:1161--1190, 2015.

\bibitem{Fujimura}
Masayo Fujimura and Kiyoko Nishizawa.
\newblock Moduli spaces and symmetry loci of polynomial maps.
\newblock In {\em Proceedings of the 1997 International Symposium on Symbolic
  and Algebraic Computation}, ISSAC '97, pages 342--348, New York, NY, USA,
  1997. ACM.

\bibitem{charzero}
Brandon Gontmacher, Benjamin Hutz, Grayson Jorgenson, Srinjoy Srimani, and
  Simon Xu.
\newblock Automorphism loci for degree 3 and degree 4 endomorphisms of the
  projective line.
\newblock {\em New York Journal of Mathematics}, 27:1613--1702, 2021.

\bibitem{klein_icosa}
Felix Klein.
\newblock {\em Vorlesungen {\"u}ber das {I}kosaeder und die {A}ufl{\"o}sung der
  {G}leichungen vom f{\"u}nften {G}rade}.
\newblock Leipzig, Teubner, 1884.

\bibitem{MSW}
Nikita Miasnikov, Brian Stout, and Phillip Williams.
\newblock Automorphism loci for the moduli space of rational maps.
\newblock {\em Acta Arith.}, 180(3):267--296, 2017.

\bibitem{Milnor}
John Milnor.
\newblock Geometry and dynamics of quadratic rational maps.
\newblock {\em Experimental Mathematics}, 2(1):37--83, 1993.

\bibitem{Silverman12}
Joseph~H. Silverman.
\newblock The field of definition for dynamical systems on $\mathbb{P}^{1}$.
\newblock {\em Compositio Mathematica}, 98:269--304, 1995.

\bibitem{Silvermana}
Joseph~H. Silverman.
\newblock The space of rational maps on {$\bold P^1$}.
\newblock {\em Duke Math. J.}, 94(1):41--77, 1998.

\bibitem{Silverman10}
Joseph~H. Silverman.
\newblock {\em The Arithmetic of Dynamical Systems}, volume 241 of {\em
  Graduate Texts in Mathematics}.
\newblock Springer-Verlag, 2007.

\bibitem{Silverman}
Joseph~H. Silverman.
\newblock {\em Moduli spaces and arithmetic dynamics}, volume~30 of {\em CRM
  Monograph Series}.
\newblock American Mathematical Society, Providence, RI, 2012.

\bibitem{smith}
Larry Smith.
\newblock {\em Polynomial Invariants of Finite Groups}.
\newblock Research Notes in Mathematics. Taylor \& Francis, 1995.

\bibitem{sage}
William Stein and David Joyner.
\newblock {SAGE}: System for algebra and geometry experimentation.
\newblock {\em Communications in Computer Algebra (SIGSAM Bulletin)},
  39(4):61--64, July 2005.
\newblock http://www.sagemath.org.

\bibitem{West}
Lloyd~W. West.
\newblock The moduli space of cubic rational maps.
\newblock {\em arXiv:1408.3247}, 2014.

\end{thebibliography}
\end{document}